\newtheorem{thm}{Theorem}[section]
\newtheorem{cor}[thm]{Corollary}
\newtheorem{lem}[thm]{Lemma}
\newcommand{\R}{{\mathbb{R}}}
\newcommand{\Z}{{\mathbb{Z}}}
\newcommand{\1}{\partial}
\newcommand{\2}{\overline}
\newcommand{\3}{\varepsilon}
\newcommand{\4}{\widetilde}
\def\ni{\noindent}
\begin{document}
\title{Asymptotic behaviour of solutions of the fast diffusion equation near its extinction time}
\author{Kin Ming Hui\\
Institute of Mathematics, Academia Sinica\\
Taipei, Taiwan, R. O. C.}
\date{Nov 15, 2014}
\smallbreak \maketitle
\begin{abstract}
Let $n\ge 3$, $0<m<\frac{n-2}{n}$, $\rho_1>0$, $\beta\ge\frac{m\rho_1}{n-2-nm}$ and $\alpha=\frac{2\beta+\rho_1}{1-m}$. For any $\lambda>0$, we will prove the existence and uniqueness (for $\beta\ge\frac{\rho_1}{n-2-nm}$) of radially symmetric singular solution $g_{\lambda}\in C^{\infty}(\R^n\setminus\{0\})$ of the  elliptic equation $\Delta v^m+\alpha v+\beta x\cdot\nabla v=0$, $v>0$, in $\R^n\setminus\{0\}$, satisfying $\displaystyle\lim_{|x|\to 0}|x|^{\alpha/\beta}g_{\lambda}(x)=\lambda^{-\frac{\rho_1}{(1-m)\beta}}$. When  $\beta$ is sufficiently large, we prove the higher order asymptotic behaviour of radially symmetric solutions of the above elliptic equation as $|x|\to\infty$. We also obtain an inversion formula for the radially symmetric solution of the above equation. As a consequence 
we will prove the extinction behaviour of the solution $u$ of the fast diffusion equation $u_t=\Delta u^m$ in $\R^n\times (0,T)$ near the extinction time $T>0$. 
\end{abstract}

\vskip 0.2truein

Key words: extinction behaviour, fast diffusion equation, self-similar solution, higher order asymptotic behaviour

AMS 2010 Mathematics Subject Classification: Primary 35B40, 35K65 Secondary 35J70

\vskip 0.2truein
\setcounter{equation}{0}
\setcounter{section}{0}

\section{Introduction}
\setcounter{equation}{0}
\setcounter{thm}{0}

The equation 
\begin{equation}\label{fde}
u_t=\Delta u^m
\end{equation}
appears in many physical models. When $m>1$, \eqref{fde} is the porous medium equation which models the flow of gases or liquid through porous media. When $m=1$, \eqref{fde} is the heat equation. When $0<m<1$, \eqref{fde} is the fast diffusion equation. When $m=\frac{n-2}{n+2}$, $n\ge 3$, and $g=u^{\frac{4}{n+2}}dx^2$ is a metric on $\R^n$ which evolves by the Yamabe flow,
\begin{equation*}
\frac{\1 g}{\1 t}=-Rg
\end{equation*}
where $R$ is the scalar curvature of the metric $g$, then $u$ satisfies \cite{DKS}, \cite{PS},
\begin{equation*}
u_t=\frac{n-2}{m}\Delta u^m.
\end{equation*}
It is because of the importance of the equation \eqref{fde} and its relation to the Yamabe flow, there are a lot of research on this equation recently by P.~Daskalopoulos, J.~King, M.~del Pino, N.~Sesum, M.~S\'aez,
\cite{DKS}, \cite{DPS}, \cite{DS1}, \cite{DS2}, \cite{PS}, S.Y.~Hsu [Hs1--3], K.M.~Hui \cite{Hu1}, \cite{Hu2}, M.~Fila, J.L.~Vazquez, M.~Winkler, E.~Yanagida, \cite{FVWY}, \cite{FW}, A.~Blanchet, M.~Bonforte, J.~Dolbeault, G.~Grillo and J.L.~Vazquez, \cite{BBDGV}, \cite{BDGV}, etc. We refer the reader to the survey paper \cite{A} by D.G.~Aronson and the books \cite{DK}, \cite{V2}, by P.~Daskalopoulos,  C.E.~Kenig, and J.L.~Vazquez on the recent progress on this equation.

As observed by J.L.~Vazquez \cite{V1}, M.A.~Herrero and M.~Pierre \cite{HP}, and others \cite{Hs2}, \cite{Hu1}, there is a big difference on the behaviour of solution of \eqref{fde} for the case $\frac{n-2}{n}<m<1$, $n\ge 3$, and the case $0<m\le\frac{n-2}{n}$, $n\ge 3$. For example for any $0\le u_0\in L_{loc}^1(\R^n)$, $u_0\not\equiv 0$, when $\frac{n-2}{n}<m<1$, $n\ge 3$, there exists a unique global positive smooth solution of \eqref{fde} in $\R^n\times (0,\infty)$ with initial value $u_0$ \cite{HP}. 
On the other hand when $0<m<\frac{n-2}{n}$, $n\ge 3$, there exists $0\le u_0\in L_{loc}^1(\R^n)$, $u_0\not\equiv 0$, and $T>0$ such that the solution of 
\begin{equation}\label{Cauchy-problem}
\left\{\begin{aligned}
u_t=&\Delta u^m \quad\mbox{ in }\R^n\times (0,T)\\
u(x,0)=&u_0\qquad\mbox { in }\R^n.
\end{aligned}\right.
\end{equation}
extincts at time $T$ \cite{DS1}. Since the asymptotic behaviour of the solution of \eqref{Cauchy-problem}
near the extinction time is usually similar to the asymptotic behaviour of the self-similar solution of \eqref{fde}, in order to understand the behaviour of the solution of \eqref{Cauchy-problem} near the extinction time
we will first study various properties of the self-similar solutions of \eqref{fde} in this paper. 

Let $n\ge 3$, $0<m<\frac{n-2}{n}$, $\rho_1>0$, $\beta>\frac{m\rho_1}{n-2-nm}$ and $\alpha=\frac{2\beta+\rho_1}{1-m}$. For any $\lambda>0$, by Theorem 1.1 of \cite{Hs1} there exists a unique radially symmetric solution  $v_{\lambda}$ 
of the equation
\begin{equation}\label{elliptic-eqn}
\Delta v^m+\alpha v+\beta x\cdot\nabla v=0, v>0,
\end{equation}
in $\R^n$ that satisfies $v_{\lambda}(0)=\lambda$. 
By \cite{Hs3}, $v_{\lambda}$ satisfies
\begin{equation}\label{v-infty-behaviour}
\lim_{r\to\infty}r^2v_{\lambda}(r)^{1-m}=\frac{2m(n-2-nm)}{(1-m)\rho_1}.
\end{equation}
Note that when $\rho_1=1$, the function
\begin{equation}\label{self-similar-soln}
\psi_{\lambda}(x,t)=(T-t)^{\alpha}v_{\lambda}((T-t)^{\beta}x)
\end{equation} 
is a solution of \eqref{fde} in $\R^n\times (0,T)$ for any $T>0$. On the other hand if $\rho_1=1$, $m=\frac{n-2}{n+2}$, and $n\ge 3$, then the metric
\begin{equation}\label{g-metric}
g=\left(\left(\frac{n-1}{m}\right)^{\frac{1}{1-m}}v_{\lambda}\right)^{\frac{4}{n+2}}dx^2
\end{equation}
on $\R^n$ is a Yamabe shrinking soliton \cite{DS2}. Conversely as proved by P.~Daskalopoulos and N.~Sesum \cite{DS2} any Yamabe shrinking soliton on complete locally conformally flat manifold is of the form \eqref{g-metric}  where $v_{\lambda}$ is a solution of \eqref{elliptic-eqn} in $\R^n$ for some  $\alpha=\frac{2\beta+1}{1-m}$ with $v_{\lambda}(0)=\lambda$ for some constant $\lambda>0$. 

Let $\beta_1=\frac{\rho_1}{n-2-nm}$,
\begin{equation*}
\beta_0=\left\{\begin{aligned}
&\rho_1\sqrt{\frac{2(1-m)}{n-2-nm}}\qquad\qquad\qquad\qquad\qquad\qquad\quad\mbox{ if }0<m\le\frac{n-2}{n+2}\\
&\rho_1\max\left(2\sqrt{\frac{2(1-m)}{n-2-nm}},\frac{(n+2)m-(n-2)}{n-2-nm}\right)\quad\mbox{ if }\,\frac{n-2}{n+2}<m<\frac{n-2}{n},\end{aligned}\right. 
\end{equation*} 
and $\gamma_2$, $\gamma_1$, be the two roots of the equation
\begin{equation}\label{gamma-eqn}
\gamma^2-\left(\frac{n-2-(n+2)m}{1-m}+\frac{2\beta(n-2-nm)}{(1-m)\rho_1}\right)\gamma+\frac{2(n-2-nm)}{(1-m)}=0
\end{equation}
given by 
\begin{equation}\label{roots}
\gamma_i=\frac{1}{2(1-m)}\left\{A(\beta)+(-1)^i\sqrt{A(\beta)^2-8(n-2-nm)(1-m)}\right\},\quad i=1,2
\end{equation}
where
\begin{equation*}
A(\beta)=n-2-(n+2)m+\frac{2\beta(n-2-nm)}{\rho_1}.
\end{equation*}
Now if $\beta\ge\beta_0$, then
\begin{align*}
&A(\beta)^2-8(n-2-nm)(1-m)\\
\ge&\left\{\begin{aligned}
&\frac{4(n-2-nm)^2}{\rho_1^2}\left\{\beta^2-\frac{2(1-m)\rho_1^2}{n-2-nm}\right\}\quad\mbox{ if }0<m\le\frac{n-2}{n+2}\\
&\frac{(n-2-nm)^2}{\rho_1^2}\left\{\beta^2-\frac{8(1-m)\rho_1^2}{n-2-nm}\right\}\quad\mbox{ if }\frac{n-2}{n+2}<m<\frac{n-2}{n}\end{aligned}\right.\\
\ge&0.
\end{align*}
Hence $\gamma_2\ge\gamma_1>0$ are real roots of \eqref{gamma-eqn} when $0<m<\frac{n-2}{n}$, $n\ge 3$, and $\beta\ge\beta_0$. Note that 
\begin{equation}\label{beta>beta1-cond}
\beta>\beta_1\quad (\beta=\beta_1) \Leftrightarrow\quad n\beta>\alpha\quad (n\beta=\alpha\mbox{ respectively}).
\end{equation}
and when $m=\frac{n-2}{n+2}$ and $\rho_1=1$, then $\beta_0=\frac{2}{\sqrt{n-2}}$, $\beta_1=\frac{1}{2m}$, and \eqref{gamma-eqn} is equivalent to 
\begin{equation*}
\gamma^2-\beta (n-2)\gamma+(n-2)=0.
\end{equation*}
In \cite{DKS} P.~Daskalopoulos, J.~King and N.~Sesum, proved that when $m=\frac{n-2}{n+2}$, $n\ge 3$,
$\rho_1=1$, and $\beta>\beta_0$, the radially symmetric solution $v_{\lambda}$ of \eqref{elliptic-eqn} in $\R^n$ with $v_{\lambda}(0)=\lambda$ satisfies
\begin{equation}\label{v-lambda-infty-behaviour}
v_{\lambda}(x)=\left(\frac{C_{\ast}}{|x|^2}\right)^{\frac{1}{1-m}}(1-B|x|^{-\gamma}+o(|x|^{-\gamma}))\quad\mbox{ as }|x|\to\infty
\end{equation} 
for some constants $B\in\R$, 
\begin{equation}\label{c-ast-defn}
C_{\ast}=\frac{2m(n-2-nm)}{(1-m)\rho_1},
\end{equation}
and $\gamma>0$ where $\gamma=\gamma_2$ if $3\le n<6$ and $\beta=\beta_1$, and $\gamma=\gamma_1$ otherwise. In this paper we will extend this second order asymptotic result to the case $0<m<\frac{n-2}{n}$, $n\ge 3$ and $\rho_1>0$. For any 
$0<m<\frac{n-2}{n}$, $n\ge 3$, and $\lambda>0$, we will also extend Theorem 1.2 of \cite{DKS} and prove the existence and uniqueness of radially symmetric singular solution $g_{\lambda}\in C^{\infty}(\R^n\setminus\{0\})$ of \eqref{elliptic-eqn} in $\R^n\setminus\{0\}$ that satisfies
\begin{equation}\label{blow-up-rate-x=0}
\lim_{|x|\to 0}|x|^{\alpha/\beta}g_{\lambda}(x)=\lambda^{-\frac{\rho_1}{(1-m)\beta}}.
\end{equation} 
We also obtain higher order decay rate of $g_{\lambda}$ as $|x|\to\infty$.
Let
\begin{equation*}
\mathcal{C}(x)=\left(\frac{C_{\ast}}{r^2}\right)^{\frac{1}{1-m}}.
\end{equation*}
Then $\mathcal{C}(x)$ is a solution of \eqref{elliptic-eqn} in $\R^n\setminus\{0\}$.
In the papers \cite{DS1}, \cite{BBDGV}, etc. P.~Daskalopoulos and N.~Sesum, 
A.~Blanchet, M.~Bonforte, J.~Dolbeault, G.~Grillo and J.L.~Vazquez, etc. obtain the asymptotic behaviour of the solution of \eqref{Cauchy-problem} near the extinction time for $0<m<\frac{n-2}{n}$, $n\ge 3$, when the initial value is sandwiched between
two Barenblatt solutions. In this paper we will extend their results to initial values that satisfies other growth conditions.

More precisely we obtain the following main results in this paper.

\begin{thm}\label{blow-up-self-similar-soln-thm}
Let $n\ge 3$, $0<m<\frac{n-2}{n}$, $\rho_1>0$, $\lambda>0$, $\beta\ge\frac{m\rho_1}{n-2-nm}$ and $\alpha=\frac{2\beta+\rho_1}{1-m}$. Then there exists a  radially symmetric solution $g_{\lambda}$ of \eqref{elliptic-eqn} in $\R^n\setminus\{0\}$ that satisfies \eqref{blow-up-rate-x=0},
\begin{equation}\label{g-lambda'-negative}
g_{\lambda}'(r)\le 0\quad\forall r>0
\end{equation}
and
\begin{equation}\label{g-lower-bd}
\lambda^{-\frac{\rho_1}{(1-m)\beta}}\le r^{\frac{\alpha}{\beta}}g_{\lambda}(r)\le\lambda^{-\frac{\rho_1}{(1-m)\beta}}\mbox{exp}\,\left(\frac{\beta C_0}{\rho_1}\lambda^{\frac{\rho_1}{\beta}}r^{\frac{\rho_1}{\beta}}\right)\quad\forall r>0.
\end{equation}
for some constant $C_0>0$. Moreover if $\beta\ge\beta_1$, then the solution is unique.
\end{thm}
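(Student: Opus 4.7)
The plan is to reduce \eqref{elliptic-eqn} to a radial ODE, construct $g_\lambda$ as a limit of solutions to approximating initial value problems posed at $r=\varepsilon>0$, and use explicit barriers to pass to the limit $\varepsilon\to0$. Write $p:=\alpha/\beta$ and $c_\lambda:=\lambda^{-\rho_1/((1-m)\beta)}$. The radial form of \eqref{elliptic-eqn} reads
$$(v^m)'' + \frac{n-1}{r}(v^m)' + \alpha v + \beta r v' = 0, \qquad r>0.$$
The scaling $v(r)\mapsto \mu^{2/(1-m)} v(\mu r)$ preserves solutions, so it suffices to treat $\lambda=1$ and recover the general case by rescaling. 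The test function $V_0(r):=c_\lambda r^{-p}$ kills the drift terms, $\alpha V_0+\beta r V_0'\equiv0$ (since $p\beta=\alpha$), leaving $\Delta V_0^m = c_\lambda^m\,pm(pm-(n-2))\,r^{-pm-2}$; the hypothesis $\beta\ge m\rho_1/(n-2-nm)$ is exactly $pm\le n-2$, so $V_0$ has definite sign under $L[v]:=\Delta v^m+\alpha v+\beta r v'$, with equality precisely when $\beta=m\rho_1/(n-2-nm)$. This makes $V_0$ the natural barrier responsible for the sharp left-hand side of \eqref{g-lower-bd}.

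For the upper barrier I would take the ansatz suggested by \eqref{g-lower-bd}, $\bar V(r):=c_\lambda r^{-p}\exp\!\big(\tfrac{\beta C_0}{\rho_1}\lambda^{\rho_1/\beta} r^{\rho_1/\beta}\big)$, and verify that it is a proper barrier for $C_0=C_0(m,n,\rho_1)$ sufficiently large. The exponent $\rho_1/\beta$ is forced by dimensional matching: $r^{-p}\cdot r^{\rho_1/\beta}=r^{-2/(1-m)}$, the homogeneity of the Barenblatt profile $\mathcal{C}$, so the perturbation introduced into $\Delta\bar V^m$ has precisely the same scaling as the perturbation produced in $\alpha v+\beta r v'$, enabling a pointwise comparison after a direct (and somewhat lengthy) substitution.

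With these barriers the construction of $g_\lambda$ proceeds in three substeps: (i) for each small $\varepsilon>0$, solve the IVP on $[\varepsilon,\infty)$ with $v_\varepsilon(\varepsilon)=\bar V(\varepsilon)$, $v_\varepsilon'(\varepsilon)=\bar V'(\varepsilon)$ --- working in the variable $u=v^m$ renders the problem regular and standard ODE theory gives local existence and uniqueness; (ii) comparison for the approximate problem preserves $V_0\le v_\varepsilon\le\bar V$, allowing continuation to all of $[\varepsilon,\infty)$; (iii) the uniform bounds and the ODE provide equicontinuity on compact subsets of $(0,\infty)$, and Arzela-Ascoli yields a subsequential limit $g_\lambda\in C^\infty(\R^n\setminus\{0\})$ satisfying \eqref{elliptic-eqn} and the two-sided bound \eqref{g-lower-bd}; squeezing as $r\to0$ forces \eqref{blow-up-rate-x=0}. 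Monotonicity \eqref{g-lambda'-negative} follows by propagating the sign $v_\varepsilon'(\varepsilon)<0$: differentiating the ODE in $r$ gives an equation for $v'$ for which a first interior zero contradicts the ODE, so the inequality passes to the limit.

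For uniqueness when $\beta\ge\beta_1$, I would use \eqref{beta>beta1-cond}, which in this range gives $n\beta\ge\alpha$, so the ODE takes the divergence form
$$\bigl(r^{n-1}(v^m)'+\beta r^n v\bigr)' = (n\beta-\alpha)\,r^{n-1}v\ge0.$$
Given two solutions $g_1,g_2$ with the common singular limit \eqref{blow-up-rate-x=0}, matching of leading-order asymptotics forces the flux quantity $r^{n-1}((g_1^m)'-(g_2^m)')+\beta r^n (g_1-g_2)$ to vanish as $r\to0$, and the sign condition $n\beta\ge\alpha$ makes this flux monotone in $r$, ultimately forcing $g_1\equiv g_2$. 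The main technical obstacle throughout is the verification of $\bar V$: the asymmetric homogeneities of $\Delta V^m$ and the drift terms force a careful choice of the exponent $\rho_1/\beta$ and of $C_0$, and balancing the exponential correction against the nonlinear diffusion is the most delicate computation in the argument.
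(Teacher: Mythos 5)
Your overall scaffolding (reduce to the radial ODE, approximate by initial--value problems posed near $r=0$, establish two--sided bounds, extract a limit by Arzel\`a--Ascoli) matches the paper's architecture, and your observation that $V_0(r)=c_\lambda r^{-\alpha/\beta}$ annihilates the drift $\alpha v+\beta r v'$ while $\beta\ge m\rho_1/(n-2-nm)$ is exactly $pm\le n-2$ is correct and is what the paper uses too (it takes $V_0$ as the exact initial data at $r=1/i$). However, there are two genuine gaps in the construction, and the central technical mechanism is absent.

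First, a second--order ODE initial--value problem does not obey the barrier comparison you invoke. If $\bar V$ is a strict supersolution ($L[\bar V]<0$) and you set $v_\varepsilon(\varepsilon)=\bar V(\varepsilon)$, $v_\varepsilon'(\varepsilon)=\bar V'(\varepsilon)$, then writing $D=v_\varepsilon^m-\bar V^m$ one finds $D(\varepsilon)=D'(\varepsilon)=0$ and $D''(\varepsilon)=-L[\bar V](\varepsilon)>0$, so $v_\varepsilon$ \emph{exceeds} $\bar V$ immediately to the right of $\varepsilon$; tangency to a strict supersolution pushes the solution above it, not below. Even if one starts from $V_0$ instead, a transversal crossing of $\bar V$ at a first crossing point carries no contradiction, since the monotone structure needed to forbid a sign change is not present in the raw second--order equation. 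The paper circumvents precisely this by never comparing against a pointwise barrier: the lower bound comes from the first--order linear ODE satisfied by $h_1(r)=g(r)+(\beta/\alpha)\,r g'(r)$ (which vanishes at the initial point when the data matches $V_0$, and then has a sign by a Gronwall/integrating--factor argument), and the upper bound comes from the first--order Riccati inequality satisfied by $z=w_s/w$ in the variable $s=\log r$, where $w(r)=r^{\alpha/\beta}g(r)$. Both bounds in \eqref{g-lower-bd} are outputs of first--order comparisons; your proposal has no analogue of $h_1$ or of $z$, and the claim that $\bar V$ can be verified to be a supersolution (the step you flag as ``most delicate'') is never carried out and would not by itself close the argument even if it were true. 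As a side remark, the identity you state, $r^{-p}\cdot r^{\rho_1/\beta}=r^{-2/(1-m)}$, is false; what is true (and what makes $\rho_1/\beta$ the right exponent) is that $-p+\rho_1/\beta=-pm-2$, so the perturbation in the drift and the perturbation in $\Delta \bar V^m$ enter at the same order.

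Second, your uniqueness argument has a real gap. You observe correctly that for $\beta\ge\beta_1$ one has the flux identity $\bigl(r^{n-1}(v^m)'+\beta r^n v\bigr)'=(n\beta-\alpha)r^{n-1}v$ and that the flux of $g_1-g_2$ tends to $0$ as $r\to0$ by the matched asymptotics (this is the content of Lemma \ref{g-integral-eqn-lemma}). But the derivative of the flux difference is $(n\beta-\alpha)r^{n-1}(g_1-g_2)$, whose sign is exactly what you are trying to determine; nothing forces the flux difference to be monotone without already knowing the sign of $g_1-g_2$ near $0$. The paper's resolution is a rescaling trick (Theorem \ref{uniqueness-thm}): replace $g_1$ by $w_i(r)=\lambda_i^{2/(1-m)}g_1(\lambda_i r)$ with $\lambda_i\downarrow1$, which strictly lowers the limit in \eqref{blow-up-rate-x=0} and hence gives a strict ordering $w_i<g_2$ near $r=0$; only then does the first--crossing/flux argument (Lemma \ref{g-lambda-compare-lemma}) apply, and one concludes by letting $\lambda_i\to1$. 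Without the strict ordering near $r=0$, the flux argument cannot get started. You should supply this rescaling step, or an equivalent device, to make the uniqueness proof go through.
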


\begin{cor}\label{V-cor}
Let $n\ge 3$, $0<m<\frac{n-2}{n}$, $\rho_1=1$, $\lambda>0$, $\beta\ge\frac{m}{n-2-nm}$ and $\alpha=\frac{2\beta+1}{1-m}$. Let $g_{\lambda}$ be the radially symmetric solution of \eqref{elliptic-eqn} in $\R^n\setminus\{0\}$ that satisfies \eqref{blow-up-rate-x=0}. Then for any $T>0$ the function
\begin{equation}\label{V-lambda-defn}
V_{\lambda}(x,t)=(T-t)^{\alpha}g_{\lambda}((T-t)^{\beta}x)
\end{equation}
satisfies \eqref{fde} in $\R^n\setminus\{0\}$ and 
\begin{equation*}
\lim_{|x|\to 0}|x|^{\frac{\alpha}{\beta}}V_{\lambda}(x,t)=\lambda^{-\frac{1}{(1-m)\beta}}.
\end{equation*}
\end{cor}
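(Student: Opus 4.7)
The plan is a direct scaling computation verifying the ansatz \eqref{V-lambda-defn}. Set $y = (T-t)^{\beta}x$ and differentiate $V_{\lambda}(x,t) = (T-t)^{\alpha}g_{\lambda}(y)$ in $t$ by the chain rule, obtaining
\begin{equation*}
V_{\lambda,t}(x,t) = -(T-t)^{\alpha-1}\bigl[\alpha\, g_{\lambda}(y) + \beta\, y\cdot\nabla g_{\lambda}(y)\bigr].
\end{equation*}
For the spatial side, since $V_{\lambda}^{m}(x,t) = (T-t)^{m\alpha}g_{\lambda}^{m}(y)$ and $y$ depends on $x$ through the factor $(T-t)^{\beta}$, the chain rule for the Laplacian yields
\begin{equation*}
\Delta_{x}V_{\lambda}^{m}(x,t) = (T-t)^{m\alpha + 2\beta}(\Delta g_{\lambda}^{m})(y).
\end{equation*}

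Next I match the temporal exponents: the relation $\alpha - 1 = m\alpha + 2\beta$ is equivalent to $\alpha = (2\beta+1)/(1-m)$, which is exactly the standing hypothesis (with $\rho_1=1$). Once the prefactors coincide, the requirement $V_{\lambda,t} = \Delta V_{\lambda}^{m}$ reduces to
\begin{equation*}
\Delta g_{\lambda}^{m}(y) + \alpha\, g_{\lambda}(y) + \beta\, y\cdot\nabla g_{\lambda}(y) = 0 \quad \text{for } y\in\R^{n}\setminus\{0\},
\end{equation*}
which is precisely \eqref{elliptic-eqn} satisfied by $g_{\lambda}$ via Theorem \ref{blow-up-self-similar-soln-thm}. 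Since $x\ne 0$ iff $y\ne 0$ (as $T>t$), this verifies that $V_{\lambda}$ solves \eqref{fde} on $\R^{n}\setminus\{0\}$.

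For the blow-up rate, I factor $|x|^{\alpha/\beta}$ through the rescaling:
\begin{equation*}
|x|^{\alpha/\beta}V_{\lambda}(x,t) = (T-t)^{\alpha}\cdot (T-t)^{-\alpha}\,|y|^{\alpha/\beta}g_{\lambda}(y) = |y|^{\alpha/\beta}g_{\lambda}(y).
\end{equation*}
Fixing $t<T$ and letting $|x|\to 0$ forces $|y|\to 0$, and the limit is $\lambda^{-1/((1-m)\beta)}$ by \eqref{blow-up-rate-x=0}. There is no real obstacle here; the only subtlety is checking algebraically that the scaling exponents are compatible, and this is built into the choice $\alpha = (2\beta+1)/(1-m)$ assumed in the corollary.
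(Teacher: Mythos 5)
Your computation is correct and is precisely the direct scaling verification the paper leaves to the reader (the paper gives no explicit proof of this corollary, treating it as the obvious analogue of the remark around \eqref{self-similar-soln} that $\psi_\lambda$ solves \eqref{fde}). Both the exponent matching $\alpha-1 = m\alpha+2\beta \Leftrightarrow \alpha=(2\beta+1)/(1-m)$ and the cancellation of the $(T-t)^{\pm\alpha}$ factors in the blow-up rate are handled exactly as intended.
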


\begin{thm}\label{second-order-asymptotic-self-similar-soln-thm}
Let $n\ge 3$, $0<m<\frac{n-2}{n}$, $\rho_1>0$, and $\lambda>0$. Then there exists a constant $\beta_2\ge\max\left(\beta_0, \beta_1\right)$ depending on $n$ and $m$ such that for any
$\beta>\beta_2$, $\alpha=\frac{2\beta+\rho_1}{1-m}$, if $v_{\lambda}$ is a radially symmetric solution of \eqref{elliptic-eqn} in $\R^n$ with $v_{\lambda}(0)=\lambda$, then \eqref{v-lambda-infty-behaviour} holds for some constants $B>0$ and $\gamma=\gamma_1>0$
where $\gamma_1$ is given by \eqref{roots}.
\end{thm}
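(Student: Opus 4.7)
The approach is to linearize \eqref{elliptic-eqn} at the exact singular solution $\mathcal{C}(x) = (C_{\ast}/r^{2})^{1/(1-m)}$ and run a classical stable-manifold asymptotic analysis for the resulting radial ODE at $r=\infty$. Let $s = \log r$ and $\phi(s) := C_{\ast}^{-1}\, r^{2} v_{\lambda}(r)^{1-m}$; by \eqref{v-infty-behaviour}, $\phi(s) \to 1$ as $s \to \infty$. Substituting into the radial form of \eqref{elliptic-eqn} and simplifying recasts the equation as a second-order autonomous ODE for $\phi$ having $\phi \equiv 1$ as equilibrium. A direct computation shows that its linearization at $\phi = 1$ has characteristic polynomial exactly \eqref{gamma-eqn}, so the two characteristic exponents of the linearized flow are $-\gamma_{1}$ and $-\gamma_{2}$. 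The calculation preceding \eqref{beta>beta1-cond} guarantees that both are real and negative once $\beta \ge \beta_{0}$, so $\phi = 1$ is a stable node with a one-dimensional fast stable manifold tangent to the $-\gamma_{2}$ direction and a one-parameter family of slow orbits tangent to the $-\gamma_{1}$ direction.

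Standard planar-ODE asymptotic theory (Hartman's stable manifold theorem together with a Poincar\'e normalization, or a direct iteration using the integrating factors $e^{\gamma_{i} s}$) then yields, for every orbit $\phi(s)\to 1$ not lying on the fast manifold, a leading-order expansion
\[
\phi(s) - 1 = A\, e^{-\gamma_{1} s} + o(e^{-\gamma_{1} s}), \qquad s \to \infty,
\]
with $A \ne 0$. Reverting to $r = e^{s}$ and expanding the power $(\,\cdot\,)^{1/(1-m)}$ gives
\[
v_{\lambda}(r) = \Bigl(\frac{C_{\ast}}{r^{2}}\Bigr)^{1/(1-m)}\Bigl(1 + \tfrac{A}{1-m}\, r^{-\gamma_{1}} + o(r^{-\gamma_{1}})\Bigr),
\]
which is precisely \eqref{v-lambda-infty-behaviour} with $\gamma = \gamma_{1}$ and $B = -A/(1-m)$.

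The heart of the proof, and the origin of the quantitative threshold $\beta_{2}$, is therefore to show $A < 0$, i.e.\ that the trajectory of $v_{\lambda}$ avoids the fast stable manifold and approaches $\mathcal{C}$ along the slow direction from below. My plan is to construct explicit barriers of the form $\mathcal{C}(r)\bigl(1 \mp c\, r^{-\gamma_{1}}\bigr)$ on a half-line $\{r \ge R\}$ with small $c > 0$: substitution into \eqref{elliptic-eqn} produces a cancellation at order $r^{-2/(1-m) - \gamma_{1}}$ by virtue of $\gamma_{1}$ being a root of \eqref{gamma-eqn}, leaving a nonlinear remainder, coming principally from the $\binom{m}{2} c^{2}$ term in the expansion of $(1 \mp c\, r^{-\gamma_{1}})^{m}$, whose sign is controllable once the spectral gap $\gamma_{2} - \gamma_{1}$ is sufficiently wide. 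Since $\gamma_{1}\gamma_{2} = 2(n-2-nm)/(1-m)$ is $\beta$-independent while $\gamma_{1}+\gamma_{2} = A(\beta)/(1-m) \to \infty$, the gap $\gamma_{2} - \gamma_{1}$ tends to $\infty$ as $\beta \to \infty$; choosing $\beta_{2}$ so that this gap exceeds the appropriate explicit constant turns $\mathcal{C}(1 \mp c\, r^{-\gamma_{1}})$ into genuine sub- and supersolutions of \eqref{elliptic-eqn} near infinity. Comparing these barriers with $v_{\lambda}$ on large annuli via the maximum principle, with boundary data matched using $v_{\lambda}/\mathcal{C} \to 1$, then forces $v_{\lambda}(r) = \mathcal{C}(r)\bigl(1 - c\, r^{-\gamma_{1}} + o(r^{-\gamma_{1}})\bigr)$ with some $c > 0$. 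The main obstacle is this barrier construction: controlling the sign and magnitude of the higher-order remainder precisely enough to pin down the threshold $\beta_{2}$, and carefully handling the application of the maximum principle in an unbounded region where $\mathcal{C}$ is a degenerate steady state of the linearized operator.
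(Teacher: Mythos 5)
Your route is genuinely different from the paper's. The paper does \emph{not} invoke the stable-manifold theorem or construct elliptic sub/supersolutions. Instead, after the same logarithmic change of variables it writes $q(s)=\2{q}(s)/C_{\ast}^{m/(1-m)}$, sets $q=1+w$, recasts the resulting autonomous ODE as a linear constant-coefficient equation with forcing $f$ (equation \eqref{w-eqn2}), and integrates twice against $e^{\gamma_1 s}$ and $e^{\gamma_2 s}$ to obtain the explicit representation \eqref{w-explicit-formula2}
\[
w(s)=-M_0\Bigl\{A_1(\beta)e^{-\gamma_1 s}\!\int_{-\infty}^{s}e^{\gamma_1 t}\4\phi(t)\,dt-A_2(\beta)e^{-\gamma_2 s}\!\int_{-\infty}^{s}e^{\gamma_2 t}\4\phi(t)\,dt\Bigr\}.
\]
The threshold $\beta_2$ is chosen precisely so that $A_1(\beta)>0>A_2(\beta)$, which pins $w$ between $0$ and $-C e^{-\gamma_1 s}\int_{-\infty}^{s}e^{\gamma_1 t}\4\phi(t)\,dt$, and the result then follows by proving $\int_{-\infty}^\infty e^{\gamma_1 t}\4\phi(t)\,dt<\infty$ (the analogue of Lemma 3.2 of \cite{DKS}). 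There is no maximum-principle argument anywhere.

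Two substantive concerns about your plan. First, you frame ``showing $A<0$'' as the heart. In fact, since $v_\lambda<\mathcal{C}$ everywhere (Lemma \ref{comparison-lemma}), $q<1$ forces $A\le 0$ automatically; the real obstruction is ruling out $A=0$, i.e.\ ruling out that the orbit lies on the fast manifold and decays like $e^{-\gamma_2 s}$. Your barriers are aimed at exactly that, but the way you set them up gives the wrong one-sided bound: a subsolution $\mathcal{C}(1-cr^{-\gamma_1})\le v_\lambda$ gives an \emph{upper} bound on $\mathcal{C}-v_\lambda$, whereas to exclude the fast decay you need a \emph{lower} bound, which would come from a \emph{super}solution of the form $\mathcal{C}(1-cr^{-\gamma_1})$ lying above $v_\lambda$. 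You would then need to verify the supersolution inequality and, more seriously, justify a comparison principle for $\Delta v^m+\alpha v+\beta x\cdot\nabla v$ on an unbounded exterior domain; the positive zeroth-order coefficient $\alpha$ breaks the standard elliptic maximum principle, and the paper's ODE/integral-formula route avoids this issue entirely. Second, your description of $\beta_2$ as a spectral-gap condition is not what the paper uses: the actual condition is $A_1(\beta)>0>A_2(\beta)$, which by \eqref{Ai-beta-defn} is $\gamma_1<\frac{\rho_1}{(1-m)\beta}<\gamma_2$ (cf.\ \eqref{gamma1-gamma2-growth-rate-compare}). That is a relation between the exponents and the forcing rate $\rho_1/((1-m)\beta)$, not merely a requirement that $\gamma_2-\gamma_1$ be large.
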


\begin{thm}\label{second-order-asymptotic-blow-up-elliptic-soln-thm}
Let $n\ge 3$, $0<m<\frac{n-2}{n}$, $\rho_1>0$, $\lambda>0$  and let $\beta_2>0$ be as in Theorem \ref{second-order-asymptotic-self-similar-soln-thm}. Then for any
$\beta>\beta_2$, $\alpha=\frac{2\beta+\rho_1}{1-m}$, if $g_{\lambda}$ is the unique radially symmetric solution of \eqref{elliptic-eqn} in $\R^n\setminus\{0\}$ which satisfies \eqref{blow-up-rate-x=0} that is given by Theorem \ref{blow-up-self-similar-soln-thm}, then 
\begin{equation}\label{g-lambda-infty-behaviour0}
g_{\lambda}(x)=\left(\frac{C_{\ast}}{|x|^2}\right)^{\frac{1}{1-m}}(1+B|x|^{-\gamma_1}+o(|x|^{-\gamma_1}))\quad\mbox{ as }|x|\to\infty
\end{equation} 
holds for some constants $B>0$ where $\gamma_1$ is given by \eqref{roots}.
\end{thm}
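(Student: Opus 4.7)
The plan is to parallel the proof of Theorem \ref{second-order-asymptotic-self-similar-soln-thm} for the regular solution $v_{\lambda}$, now applied to the singular solution $g_{\lambda}$ furnished by Theorem \ref{blow-up-self-similar-soln-thm}. The first task is to establish the leading-order behaviour
\begin{equation*}
\lim_{r\to\infty} r^{2}\, g_{\lambda}(r)^{1-m} = C_{\ast},
\end{equation*}
which is the analogue of \eqref{v-infty-behaviour} for the singular solution. Because $g_{\lambda}$ is smooth, positive, radially decreasing by \eqref{g-lambda'-negative} and vanishes at infinity, this should follow by integrating the radial form of \eqref{elliptic-eqn} and applying barrier/phase-plane arguments in the spirit of \cite{Hs3}.

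Once the leading-order profile is known I would write $g_{\lambda}(r) = \mathcal{C}(r)\bigl(1+\phi(r)\bigr)$ with $\phi(r)\to 0$ as $r\to\infty$. Since $\mathcal{C}$ is itself a solution of \eqref{elliptic-eqn}, plugging this ansatz in produces a second-order ODE for $\phi$ admitting $\phi\equiv 0$ as a solution; linearizing about $\phi\equiv 0$ and setting $t=\log r$ turns this into a constant-coefficient equation whose characteristic polynomial is exactly the quadratic in \eqref{gamma-eqn}. Under $\beta>\beta_{2}\ge\beta_{0}$ the roots $0<\gamma_{1}<\gamma_{2}$ are real, so every decaying solution of the linearization is a combination of $r^{-\gamma_{1}}$ and $r^{-\gamma_{2}}$. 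A standard perturbation/fixed-point argument, treating the nonlinear remainder as subordinate exactly as in the proof of Theorem \ref{second-order-asymptotic-self-similar-soln-thm}, then yields
\begin{equation*}
\phi(r) = B\, r^{-\gamma_{1}} + o(r^{-\gamma_{1}}) \qquad\text{as } r\to\infty
\end{equation*}
for some $B\in\R$.

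It remains to show $B>0$. I would first establish the strict pointwise comparison $g_{\lambda}(r)>\mathcal{C}(r)$ on $(0,\infty)$: near $r=0$, \eqref{blow-up-rate-x=0} gives $g_{\lambda}(r)\sim\lambda^{-\rho_{1}/[(1-m)\beta]}\,r^{-\alpha/\beta}$ whereas $\mathcal{C}(r)\sim C_{\ast}^{1/(1-m)}\,r^{-2/(1-m)}$, and since $\alpha/\beta>2/(1-m)$ we have $g_{\lambda}\gg\mathcal{C}$ for small $r$; the uniqueness part of Theorem \ref{blow-up-self-similar-soln-thm}, combined with a sweeping argument using the family $c\,\mathcal{C}$ for $c\ge 1$, should prevent the two decreasing profiles from crossing, forcing $\phi>0$ on $(0,\infty)$ and hence $B\ge 0$. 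The remaining possibility $B=0$ would force the correction to decay at the faster rate $r^{-\gamma_{2}}$, and a rigidity/shooting argument connecting the singular boundary behaviour \eqref{blow-up-rate-x=0} at $r=0$ to the decay rate at infinity should rule this out for the prescribed $\lambda>0$. The main obstacle is precisely this last step---excluding $B=0$ rigorously and making the sweeping comparison work in the degenerate/singular setting---whereas the control of the nonlinear error in the linearization is largely routine given the explicit form of $\mathcal{C}$.
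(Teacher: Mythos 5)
Your outline matches the paper's general strategy at the level of the leading--order profile, the linearization around $\mathcal{C}$ (the paper works with $w(s)=[(r^2/C_\ast)^{1/(1-m)}g_\lambda(r)]^m-1$, $r=e^s$, which is a power of your $1+\phi$, so this is only a cosmetic difference), and the characteristic quadratic \eqref{gamma-eqn}. The strict inequality $g_\lambda>\mathcal{C}$, which gives $w>0$, is also correct and is Lemma~\ref{comparison-lemma} in the paper. But the step you yourself flag as the ``main obstacle''---excluding $B=0$---is exactly the step the paper's proof is built around, and your proposal leaves it as a wish for a ``rigidity/shooting argument'' rather than giving an argument. This is a genuine gap, not a routine detail.

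Two specific issues. First, you cannot simply ``treat the nonlinear remainder as subordinate exactly as in the proof of Theorem~\ref{second-order-asymptotic-self-similar-soln-thm}.'' In the regular case $v_\lambda$, $w(s)\to -1$ as $s\to -\infty$ and one proves $\int_{-\infty}^{\infty}e^{\gamma_1 t}\4\phi(t)\,dt<\infty$ and then uses \eqref{w-explicit-formula}. In the singular case $w(s)\to\infty$ as $s\to -\infty$, so the variation--of--parameters representation must be started at a finite $s_0$ (the paper's \eqref{w-formula2}--\eqref{w-formula3}), and the constants $C_2'(s_0),C_3'(s_0)$ must be pinned down from the behaviour as $s\to -\infty$. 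This is where the paper uses the precise blow-up rate \eqref{blow-up-rate-x=0} together with the key numerical fact $\gamma_1<\frac{\rho_1}{(1-m)\beta}<\gamma_2$ (equivalent to $A_1(\beta)>0>A_2(\beta)$, which holds only for $\beta>\beta_2$); this lets one compute $C_3'(s_0)=M_0|A_2(\beta)|\int_{-\infty}^{s_0}e^{\gamma_2 t}\4\phi(t)\,dt>0$ and hence $C_2'(s_0)>0$ and the a~priori bound $0<w(s)\le C_2'(s_0)e^{-\gamma_1 s}$. None of this is in your sketch, and without it you have no control of the homogeneous part of the solution.

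Second, the actual exclusion of $B=0$ is an iterative bootstrap, not a shooting/rigidity argument. From the identity \eqref{b-eqn}, if $B=0$ one has $w(s)\le M_0A_1(\beta)e^{-\gamma_1 s}\int_s^\infty e^{\gamma_1 t}\4\phi(t)\,dt$; combining this with the quadratic bound $\4\phi\le a_2 w^2$ and the a~priori bound $w(s)\le e^{-\gamma_1 s}$ on $s\ge s_2$ yields $w(s)\le a_3 e^{-\gamma_1 s}$ with $a_3\in(0,1)$, and iterating gives $w(s)\le a_3^{2^{k+1}-1}e^{-\gamma_1 s}$ for all $k$, hence $w\equiv 0$ for $s\ge s_2$, contradicting $w>0$. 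You should make this bootstrap explicit; invoking uniqueness of $g_\lambda$ or a sweeping family $c\,\mathcal{C}$ does not obviously produce the contradiction, since the issue is the \emph{rate} of decay, not the sign of the profile.
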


By direct computation we also have the following inversion formula for the solution of \eqref{elliptic-eqn}.

\begin{thm}\label{inversion-thm}
Let $n\ge 3$, $m=\frac{n-2}{n+2}$, $\alpha$, $\beta\in\R$. Let $v$ be a radially symmetric solution of \eqref{elliptic-eqn} in $\R^n\setminus\{0\}$ and $\4{v}(\rho)=\rho^{-\frac{n-2}{m}}v(\rho^{-1})$. Then $\4{v}$ satisfies 
\begin{equation*}
\Delta\4{v}^m+\alpha'\,\4{v}+\beta'\,x\cdot\nabla\4{v}=0\quad\mbox{ in }\R^n\setminus\{0\}
\end{equation*}
where $\alpha'=\alpha-\frac{n-2}{m}\beta$, $\beta'=-\beta$. If $\alpha=\frac{2\beta+\rho_1}{1-m}$ for some constant $\rho_1>0$, then $\alpha'=\frac{2\beta'+\rho_1}{1-m}$.
Moreover $r^{\frac{\alpha}{\beta}}v(r)=\rho^{\frac{\alpha'}{\beta'}}\4{v}(\rho)$ for all $r=\rho^{-1}>0$.
\end{thm}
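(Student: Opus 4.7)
The proof will be a direct calculation exploiting the fact that at the conformal exponent $m=\frac{n-2}{n+2}$ the transformation $v\mapsto\tilde v$ is precisely the Kelvin transform applied to $v^m$. The plan is as follows.

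First, I would record the key identity $\frac{n-2}{m}=n+2$, valid at $m=\frac{n-2}{n+2}$. Hence $\tilde v(\rho)=\rho^{-(n+2)}v(\rho^{-1})$ and
\[
\tilde v^m(\rho)=\rho^{-(n+2)m}v^m(\rho^{-1})=\rho^{-(n-2)}v^m(\rho^{-1}),
\]
so $\tilde v^m$ is exactly the classical Kelvin transform of $v^m$ on $\R^n\setminus\{0\}$. Invoking the standard Kelvin identity $\Delta(|x|^{-(n-2)}f(x/|x|^2))=|x|^{-(n+2)}(\Delta f)(x/|x|^2)$ then yields
\[
\Delta\tilde v^m(x)=|x|^{-(n+2)}(\Delta v^m)(r),\qquad r=|x|^{-1}=\rho^{-1}.
\]

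Next I would treat the drift terms. Writing $v(r)=\rho^{n+2}\tilde v(\rho)$ and differentiating via $dr/d\rho=-\rho^{-2}$ gives
\[
v'(r)=-(n+2)\rho^{n+3}\tilde v(\rho)-\rho^{n+4}\tilde v'(\rho),
\]
so that
\[
\alpha v(r)+\beta r v'(r)=\rho^{n+2}\Bigl[\bigl(\alpha-(n+2)\beta\bigr)\tilde v(\rho)-\beta\rho\tilde v'(\rho)\Bigr]=\rho^{n+2}\bigl[\alpha'\tilde v+\beta'\,x\cdot\nabla\tilde v\bigr],
\]
using $(n+2)\beta=\frac{n-2}{m}\beta$. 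Combining with the Kelvin identity and dividing through by $\rho^{n+2}$ gives the claimed PDE for $\tilde v$.

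The algebraic statements that remain are short. The equivalence $\alpha=\frac{2\beta+\rho_1}{1-m}\Longleftrightarrow\alpha'=\frac{2\beta'+\rho_1}{1-m}$ reduces, via $\beta'=-\beta$ and $\alpha'=\alpha-\frac{n-2}{m}\beta$, to the single identity $\frac{4}{1-m}=\frac{n-2}{m}$, i.e. $(n+2)m=n-2$, which is exactly our assumption on $m$. Finally, for the invariance of $r^{\alpha/\beta}v(r)$, I would compute
\[
\rho^{\alpha'/\beta'}\tilde v(\rho)=\rho^{\alpha'/\beta'-(n+2)}v(r),
\]
and check $\alpha'/\beta'-(n+2)=-\alpha/\beta$, which again follows from $\alpha'/\beta'=-\alpha/\beta+(n-2)/m$ and $(n-2)/m=n+2$.

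There is no real obstacle here; the only risk is a sign or factor slip in the chain-rule computation for $v'(r)$, so I would double-check that step. The conceptual content is the single observation that the exponent $(n-2)/m$ in the definition of $\tilde v$ is chosen precisely so that $\tilde v^m$ becomes the Kelvin transform of $v^m$, and everything else is a bookkeeping consequence.
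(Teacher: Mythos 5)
Your proof is correct, and it is precisely the ``direct computation'' the paper alludes to (the paper gives no written proof for this theorem). All the checkpoints verify: at $m=\frac{n-2}{n+2}$ one has $\frac{n-2}{m}=n+2$ and $(n+2)m=n-2$, so $\4{v}^m(\rho)=\rho^{-(n-2)}v^m(\rho^{-1})$ is indeed the Kelvin transform of $v^m$ and the Kelvin identity gives $\Delta\4{v}^m(\rho)=\rho^{-(n+2)}(\Delta v^m)(\rho^{-1})$; the chain-rule computation of $v'(r)$ and the resulting factorization of $\alpha v+\beta rv'$ as $\rho^{n+2}(\alpha'\4{v}+\beta'\rho\4{v}')$ are right; the identity $\frac{4}{1-m}=\frac{n-2}{m}$ correctly reduces the $\rho_1$-claim to the hypothesis on $m$; and $\alpha'/\beta'=(n+2)-\alpha/\beta$ gives the invariance of $r^{\alpha/\beta}v(r)$. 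The only point worth highlighting as a genuine value-add is your observation that the exponent $\frac{n-2}{m}$ is chosen exactly so that $\4{v}^m$ is the Kelvin transform of $v^m$, which replaces the radial second-derivative bookkeeping with a single known identity; this makes the conformal nature of the exponent $m=\frac{n-2}{n+2}$ transparent rather than incidental.
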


For any solution $u$ of \eqref{Cauchy-problem} we let 
\begin{equation}\label{rescald-soln}
\4{u}(x,s)=(T-t)^{-\alpha}u((T-t)^{-\beta}x),\quad s=-\log (T-t).
\end{equation}
Then $\4{u}$ satisfies 
\begin{equation*}\label{rescaled-eqn}
\4{u}_s=\Delta\4{u}^m+\alpha\4{u}+\beta x\cdot\nabla \4{u}\quad\mbox{ in }\R^n\times (-\log T,\infty).
\end{equation*}
\begin{thm}\label{convergence-thm1}
Let $n\ge 3$, $0<m<\frac{n-2}{n}$, $T>0$, $\rho_1=1$, $\beta>\beta_1$ and $\alpha=\frac{2\beta+1}{1-m}$. Let $\psi_{\lambda}$ be given by \eqref{self-similar-soln} and let $u_0$ satisfy $0\le u_0\le\psi_{\lambda_1}(x,0)$ and
\begin{equation*}\label{u0-psi-L1}
|u_0-\psi_{\lambda_0}(x,0)|\le f(|x|)\in L^1(\R^n)
\end{equation*} 
for some constants $\lambda_0>0$, $\lambda_1>0$, and radially symmetric function $f$. Let $u$ be  the  solution of \eqref{Cauchy-problem} and $\4{u}$ be given by \eqref{rescald-soln}.
Then the rescaled solution $\4{u}(\cdot,s)$ converges uniformly on every compact subset of $\R^n$ to
$v_{\lambda_0}$ and in $L^1(\R^n)$ to $v_{\lambda_0}$ as $s\to\infty$.
\end{thm}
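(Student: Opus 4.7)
The key observation is that under the rescaling \eqref{rescald-soln} the self-similar solution $\psi_\lambda$ is carried to the \emph{stationary} profile $v_\lambda$: a direct substitution gives $\4{\psi}_\lambda(x,s)=(T-t)^{-\alpha}\psi_\lambda((T-t)^{-\beta}x,t)=v_\lambda(x)$ for every $s$. Thus the task reduces to showing $\4{u}(\cdot,s)\to v_{\lambda_0}$. The hypothesis $0\le u_0\le\psi_{\lambda_1}(\cdot,0)$ together with the parabolic comparison principle for \eqref{fde} yields $0\le u(x,t)\le\psi_{\lambda_1}(x,t)$, and hence the uniform bound $0\le\4{u}(\cdot,s)\le v_{\lambda_1}$ in $s$.

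For the $L^1$ statement I would apply the $L^1$-contraction property of \eqref{fde} to the two solutions $u$ and $\psi_{\lambda_0}$ of the Cauchy problem to obtain
\begin{equation*}
\|u(\cdot,t)-\psi_{\lambda_0}(\cdot,t)\|_{L^1(\R^n)}\le\|u_0-\psi_{\lambda_0}(\cdot,0)\|_{L^1(\R^n)}\le\|f\|_{L^1(\R^n)}
\end{equation*}
for every $t\in(0,T)$. Performing the change of variable $y=(T-t)^{-\beta}x$ in the integral defining $\|\4{u}(\cdot,s)-v_{\lambda_0}\|_{L^1}$ and using $v_{\lambda_0}=\4{\psi}_{\lambda_0}$ transforms this into
\begin{equation*}
\|\4{u}(\cdot,s)-v_{\lambda_0}\|_{L^1(\R^n)}=(T-t)^{n\beta-\alpha}\|u(\cdot,t)-\psi_{\lambda_0}(\cdot,t)\|_{L^1(\R^n)}.
\end{equation*}
Since $\beta>\beta_1$, relation \eqref{beta>beta1-cond} gives $n\beta>\alpha$, so the prefactor $(T-t)^{n\beta-\alpha}$ tends to $0$ as $t\to T^-$ while the second factor stays bounded; this delivers the convergence $\4{u}(\cdot,s)\to v_{\lambda_0}$ in $L^1(\R^n)$.

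For the uniform convergence on compact sets I would combine the uniform $L^\infty$ bound above with interior regularity for the rescaled equation $\4{u}_s=\Delta\4{u}^m+\alpha\4{u}+\beta x\cdot\nabla\4{u}$. H\"older regularity theory for nonnegative bounded solutions of singular quasilinear parabolic equations (DiBenedetto et al.) yields a uniform $C^\theta_{\rm loc}$ estimate for the family $\{\4{u}(\cdot,s)\}_{s\ge s_0}$. By Arzel\`a--Ascoli, along any sequence $s_k\to\infty$ one can extract a subsequence $s_{k_j}$ along which $\4{u}(\cdot,s_{k_j})\to w$ uniformly on compact subsets of $\R^n$ for some continuous $w$; the $L^1$ convergence just established forces $w=v_{\lambda_0}$ a.e., and hence everywhere by continuity. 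Since every subsequential limit equals $v_{\lambda_0}$, the entire family $\4{u}(\cdot,s)$ converges uniformly on compact sets. The main technical obstacle lies precisely in this regularity step: because $m<(n-2)/n$ is in the very singular diffusion regime and $\4{u}$ carries no a priori positive lower bound, one cannot appeal to classical Schauder theory and must instead invoke the H\"older modulus-of-continuity estimates for nonnegative bounded solutions of singular porous medium type equations.
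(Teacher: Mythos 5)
Your proposal is essentially correct, and it reaches the conclusion by a somewhat different route than the paper. Two points of comparison are worth spelling out.

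First, your $L^1$ step coincides with the paper's Lemma \ref{L1-contraction-lem}, but you write ``the $L^1$-contraction property of \eqref{fde}'' as if it were automatic. In the regime $0<m<\frac{n-2}{n}$ it is not: finite-time extinction and non-uniqueness of solutions with $L^1_{loc}$ data mean the contraction needs a hypothesis, which in Lemma \ref{L1-contraction-lem} takes the form that one of the two solutions satisfies a lower bound $\ge C|x|^{-\frac{2}{1-m}}$ for $|x|$ large on every time interval $(0,T_1)$ with $T_1<T$. You should therefore note that this hypothesis is satisfied by $\psi_{\lambda_0}$: by \eqref{v-infty-behaviour}, $v_{\lambda_0}(r)\sim C_{\ast}^{\frac{1}{1-m}}r^{-\frac{2}{1-m}}$, so $\psi_{\lambda_0}(x,t)=(T-t)^{\alpha}v_{\lambda_0}((T-t)^{\beta}x)\ge c(T-t)^{\frac{1}{1-m}}|x|^{-\frac{2}{1-m}}$ for $|x|$ large, and $(T-t)^{\frac{1}{1-m}}\ge(T-T_1)^{\frac{1}{1-m}}>0$ on $(0,T_1)$. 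With that checked, your change of variables and the identity $n\beta-\alpha>0$ from \eqref{beta>beta1-cond} give exactly the exponential $L^1$ decay appearing in the paper's Lemma \ref{L1-contraction-lem}.

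Second, for the locally uniform convergence the paper does not go directly from the $L^1$ information to equicontinuity. It first proves Theorem \ref{convergence-thm2} for initial data sandwiched between two self-similar profiles $\psi_{\lambda_1}(x,0)\le u_0\le\psi_{\lambda_2}(x,0)$; there the lower barrier forces $\4{u}(\cdot,s)\ge v_{\lambda_1}>0$ on compacts, so the rescaled equation is uniformly parabolic there and classical Schauder estimates give the compactness needed for the limit argument. Theorem \ref{convergence-thm1} is then deduced from that by the approximation scheme of \cite{HuK}, which is how the lower barrier is removed. Your route avoids the intermediate sandwiched theorem and instead appeals to H\"older modulus-of-continuity estimates for nonnegative bounded solutions of singular parabolic equations; this is a heavier regularity tool, but it shortens the argument and does not require passing through Theorem \ref{convergence-thm2}. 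Either way the subsequential-limit argument you give (Arzel\`a--Ascoli, identification of the limit via the $L^1$ convergence and continuity) is correct. So the difference is one of organization and choice of regularity machinery rather than of mathematical substance, and both approaches buy the same theorem; yours trades the barrier/approximation step for a more delicate interior regularity input.
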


\begin{thm}\label{convergence-thm3}
Let $n\ge 3$, $0<m<\frac{n-2}{n}$, $T>0$, $\rho_1=1$, $\beta>\beta_1$ and $\alpha=\frac{2\beta+1}{1-m}$. Let $V_{\lambda}$ be given by \eqref{V-lambda-defn} and let $u_0$ satisfy 
\begin{equation*}
V_{\lambda_1}(x,0)\le u_0(x)\le V_{\lambda_2}(x,0)\quad\mbox{ in }\R^n\setminus\{0\}
\end{equation*}  
and 
\begin{equation*}
u_0-V_{\lambda_0}(x,0)\in L^1(\R^n\setminus\{0\})
\end{equation*} 
for some constants $\lambda_1>\lambda_2>0$ and $\lambda_0>0$. Let $u$ be a solution of \eqref{fde} in 
$(\R^n\setminus\{0\})\times (0,T)$ with initial value $u_0$ which satisfies
\begin{equation*}
V_{\lambda_1}(x,t)\le u(x,t)\le V_{\lambda_2}(x,t)\quad\mbox{ in }(\R^n\setminus\{0\})\times (0,T)
\end{equation*}
and let $\4{u}$ be given by \eqref{rescald-soln}.
Then the rescaled solution $\4{u}(\cdot,s)$ converges uniformly on every compact subset of $\R^n\setminus\{0\}$ and in $L^1(\R^n\setminus\{0\})$ to $g_{\lambda_0}$ as $s\to\infty$. Moreover,
\begin{equation*}
\|\4{u}(\cdot,s)-g_{\lambda_0}\|_{L^1(\R^n\setminus\{0\})}\le e^{-(n\beta-\alpha)s}\|u_0-V_{\lambda_0}(\cdot,0)\|_{L^1(\R^n\setminus\{0\})}\quad\forall s>-\log T.
\end{equation*}
\end{thm}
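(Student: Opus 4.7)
The strategy is to reduce the statement to an $L^{1}$-contraction estimate for \eqref{fde} on $(\R^n\setminus\{0\})\times(0,T)$ combined with a change-of-variable identity, and then to upgrade $L^{1}$-convergence to locally uniform convergence using the pointwise sandwich together with parabolic regularity. From \eqref{V-lambda-defn} and \eqref{rescald-soln} one checks at once that $\widetilde{V_{\lambda}}(\cdot,s)\equiv g_{\lambda}$, and the substitution $y=(T-t)^{\beta}x$, $s=-\log(T-t)$ gives the pointwise identity $\4{u}(y,s)-g_{\lambda_{0}}(y)=(T-t)^{-\alpha}[u(x,t)-V_{\lambda_{0}}(x,t)]$, hence
\[
\|\4{u}(\cdot,s)-g_{\lambda_{0}}\|_{L^{1}(\R^n\setminus\{0\})}=(T-t)^{n\beta-\alpha}\|u(\cdot,t)-V_{\lambda_{0}}(\cdot,t)\|_{L^{1}(\R^n\setminus\{0\})}.
\]
Since $\beta>\beta_{1}$, \eqref{beta>beta1-cond} gives $n\beta-\alpha>0$ and $(T-t)^{n\beta-\alpha}=e^{-(n\beta-\alpha)s}$. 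The announced quantitative rate therefore reduces to the time-monotonicity
\[
\|u(\cdot,t)-V_{\lambda_{0}}(\cdot,t)\|_{L^{1}(\R^n\setminus\{0\})}\le\|u_{0}-V_{\lambda_{0}}(\cdot,0)\|_{L^{1}(\R^n\setminus\{0\})},\qquad 0<t<T.
\]

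To prove this $L^{1}$-contraction I would set $w=u-V_{\lambda_{0}}$ and $z=u^{m}-V_{\lambda_{0}}^{m}$, which have the same sign and satisfy $w_{t}=\Delta z$, multiply by $\mathrm{sgn}_{\varepsilon}(z)\,\eta^{R}_{\delta}$ where $\eta^{R}_{\delta}$ is a smooth radial cutoff equal to $1$ on $2\delta\le|x|\le R/2$ and vanishing outside $\delta\le|x|\le R$, integrate twice by parts, and send $\varepsilon\to 0$ to obtain
\[
\frac{d}{dt}\int|w|\,\eta^{R}_{\delta}\,dx\le\int|z|\,|\Delta\eta^{R}_{\delta}|\,dx.
\]
The inner contribution is handled by the pointwise bound $u,V_{\lambda_{0}}\le C|x|^{-\alpha/\beta}$ near the origin coming from \eqref{g-lower-bd} applied to $g_{\lambda_{2}}$, which yields a term of order $\delta^{(n-2-nm)/(1-m)}\to 0$ as $\delta\to 0$. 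The outer contribution is the delicate one, since neither $u$ nor $V_{\lambda_{0}}$ lies in $L^{1}(\R^{n})$; here I would use the sandwich via the mean-value bound $|z|\le mV_{\lambda_{1}}^{m-1}|w|\le C|x|^{2}|w|$, which cancels the factor $R^{-2}$ produced by $|\Delta\eta^{R}_{\delta}|$ and leaves $C\int_{R/2\le|x|\le R}|w|\,dx\to 0$ as $R\to\infty$, provided $w(\cdot,t)\in L^{1}$; the latter is first obtained by approximation with compactly supported perturbations of $V_{\lambda_{0}}(\cdot,0)$ and a preliminary Gr\"onwall argument on $\int|w|\eta^{R}_{\delta}\,dx$. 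Combining the contraction with the scaling identity yields the stated rate.

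For the locally uniform convergence the sandwich $V_{\lambda_{1}}\le u\le V_{\lambda_{2}}$ rescales to $g_{\lambda_{1}}\le\4{u}\le g_{\lambda_{2}}$, so on every compact $K\Subset\R^n\setminus\{0\}$ the rescaled solution is bounded above and bounded away from zero. The rescaled equation is therefore uniformly parabolic with smooth coefficients on $K$, and interior Schauder estimates give uniform $C^{2+\theta,1+\theta/2}$ control. Along any sequence $s_{k}\to\infty$, Arzel\`a--Ascoli extracts a subsequential limit, and the $L^{1}$-decay already established forces that limit to coincide with $g_{\lambda_{0}}$, giving convergence of the full sequence. The principal technical obstacle is the outer boundary estimate in the $L^{1}$-contraction: one must exploit the matching leading asymptotic $u,V_{\lambda_{0}}\sim(C_{\ast}/|x|^{2})^{1/(1-m)}$ at infinity (which comes from \eqref{v-infty-behaviour} and its counterpart for $g_{\lambda}$ obtained from \eqref{g-lower-bd}) in order to turn an a priori divergent outer integral into one that vanishes as $R\to\infty$.
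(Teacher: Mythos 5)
Your proposal follows essentially the same route the paper takes: the paper reduces Theorem~\ref{convergence-thm3} to an $L^1$-contraction estimate on $(\R^n\setminus\{0\})\times(0,T)$ (its Lemma~\ref{L1-contraction-singular-soln-lem}, obtained by adapting Corollary~2.2 of \cite{DS1}) plus the scaling identity $\|\4{u}(\cdot,s)-g_{\lambda_0}\|_{L^1}=(T-t)^{n\beta-\alpha}\|u(\cdot,t)-V_{\lambda_0}(\cdot,t)\|_{L^1}$, and then upgrades to locally uniform convergence via the two-sided sandwich, parabolic regularity, and a compactness argument, exactly as in the cited proof of Theorem~1.1 of \cite{HuK} — which is what you spell out. (One small numerical slip: near the origin the relevant blow-up is $|x|^{-\alpha/\beta}$, not $|x|^{-2/(1-m)}$, so the inner boundary term is $O(\delta^{\,n-2-m\alpha/\beta})$; this exponent is still positive for $\beta>\beta_1$, so the conclusion is unchanged.)
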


The plan of the paper is as follows. In section two we will prove Theorem \ref{blow-up-self-similar-soln-thm}. We will prove Theorem \ref{second-order-asymptotic-self-similar-soln-thm} and Theorem \ref{second-order-asymptotic-blow-up-elliptic-soln-thm} in section three and four respectively.
Finally we will sketch the proof of Theorem \ref{convergence-thm1} and Theorem \ref{convergence-thm3} in section five.

Unless stated otherwise we will assume that $n\ge 3$, $0<m<\frac{n-2}{n}$, $\rho_1>0$, $\lambda>0$, $\beta>\frac{m\rho_1}{n-2-nm}$ and $\alpha=\frac{2\beta+\rho_1}{1-m}$, for the rest of the paper. For any $R>0$, we let $B_R=\{x\in\R^n:|x|<R\}$. For any $T>0$ and domain $\Omega\subset\R^n$, we say that $u$ is a solution of \eqref{fde} in $\Omega\times (0,T)$ if $u$ is a smooth positive solution of \eqref{fde} in $\Omega\times (0,T)$. For any $0\le u_0\in L_{loc}^1(\Omega)$ we say that $u$ is a solution of \eqref{fde} in $\Omega\times (0,T)$ with initial value $u_0$ if $u$ is a  solution of \eqref{fde} in $\Omega\times (0,T)$ with $u(\cdot,t)\to u_0$ in $L_{loc}^1(\Omega)$ as $t\to 0$.

\section{Existence of blow-up solutions}
\setcounter{equation}{0}
\setcounter{thm}{0}

In this section we will prove Theorem \ref{blow-up-self-similar-soln-thm}. We first start with a technical lemma.

\begin{lem}\label{g-integral-eqn-lemma}
Let $\beta\ge\beta_1$ and $g_{\lambda}$ be a radially symmetric solution of \eqref{elliptic-eqn} in $\R^n\setminus\{0\}$ which satisfies \eqref{blow-up-rate-x=0}. Then $g_{\lambda}$ satisfies
\begin{equation}\label{v-integral-eqn}
r^{n-1}(v^m)'(r)+\beta r^nv(r)=(n\beta-\alpha)\int_0^rv(\rho)\rho^{n-1}\,d\rho\quad\forall r>0
\end{equation}
if $\beta>\beta_1$ and 
$g_{\lambda}$ satisfies
\begin{equation}\label{v-integral-eqn2}
r^{n-1}(v^m)'(r)+\beta r^nv(r)=\beta\lambda^{-\frac{\rho_1}{(1-m)\beta}}\quad\forall r>0\quad\mbox{ if }\beta=\beta_1.
\end{equation}
\end{lem}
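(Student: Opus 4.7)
The approach is to derive a first-order divergence identity from \eqref{elliptic-eqn}, integrate it on $(\epsilon,r)$, and then send $\epsilon\to 0^+$, controlling the boundary contributions at the origin via \eqref{blow-up-rate-x=0}. Writing $v=g_\lambda$, the radial form of \eqref{elliptic-eqn} is
$$
(v^m)''(r)+\frac{n-1}{r}(v^m)'(r)+\alpha v(r)+\beta r v'(r)=0,\qquad r>0.
$$
Multiplying by $r^{n-1}$ and using $\beta r^n v'=(\beta r^n v)'-n\beta r^{n-1}v$ rewrites this as
$$
\frac{d}{dr}\bigl[r^{n-1}(v^m)'(r)+\beta r^n v(r)\bigr]=(n\beta-\alpha)\,r^{n-1}v(r).
$$
Set $F(r):=r^{n-1}(v^m)'(r)+\beta r^n v(r)$; integrating on $(\epsilon,r)$ yields $F(r)-F(\epsilon)=(n\beta-\alpha)\int_\epsilon^r\rho^{n-1}v(\rho)\,d\rho$.

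\textbf{Boundary analysis at the origin.} By \eqref{blow-up-rate-x=0} together with the equivalence \eqref{beta>beta1-cond}, one has $\beta\epsilon^n v(\epsilon)\to 0$ when $\beta>\beta_1$ and $\beta\epsilon^n v(\epsilon)\to\beta\lambda^{-\rho_1/((1-m)\beta)}$ when $\beta=\beta_1$. In the former case the integral $\int_0^1\rho^{n-1}v\,d\rho$ converges, while in the latter the coefficient $n\beta-\alpha$ vanishes, so $F$ is constant on $(0,\infty)$. In either case it follows that the limit $L:=\lim_{\epsilon\to 0^+}\epsilon^{n-1}(v^m)'(\epsilon)$ exists.

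\textbf{Main obstacle and conclusion.} The decisive step, and the only part not reducible to formal manipulation, is to show that $L=0$. I would argue by contradiction via a growth-rate comparison: if $L\neq 0$, then $(v^m)'(\rho)=L\rho^{1-n}+o(\rho^{1-n})$ near $0$, and integrating on $(\epsilon,1)$ gives $v^m(\epsilon)=-\frac{L}{n-2}\epsilon^{2-n}(1+o(1))$, so $v(\epsilon)$ behaves like a positive multiple of $\epsilon^{(2-n)/m}$ as $\epsilon\to 0^+$. But \eqref{blow-up-rate-x=0} prescribes $v(\epsilon)\sim\lambda^{-\rho_1/((1-m)\beta)}\epsilon^{-\alpha/\beta}$, and matching the two leading exponents would force $\alpha/\beta=(n-2)/m$; an elementary calculation rewrites this as $\beta=m\rho_1/(n-2-nm)=m\beta_1$, strictly less than $\beta_1$ since $m<1$, contradicting the hypothesis $\beta\ge\beta_1$. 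Hence $L=0$. Feeding this back into the integrated identity gives \eqref{v-integral-eqn} when $\beta>\beta_1$, whereas for $\beta=\beta_1$ the (already constant) value of $F$ is obtained by sending $r\to 0^+$, giving $F\equiv\beta\lambda^{-\rho_1/((1-m)\beta)}$, which is \eqref{v-integral-eqn2}.
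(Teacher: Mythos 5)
Your proof is correct, and it takes a genuinely different route through the key step. Both arguments begin identically by integrating the divergence form of the radial ODE to obtain $F(r)-F(\epsilon)=(n\beta-\alpha)\int_\epsilon^r\rho^{n-1}v\,d\rho$ with $F(r)=r^{n-1}(v^m)'(r)+\beta r^n v(r)$; the difference lies entirely in how the boundary term at the origin is killed. The paper's proof first establishes control on $g_\lambda'$ near the origin along a subsequence: it applies the mean value theorem to $r^{\alpha/\beta+1}g_\lambda(r)$ on intervals $(r_i/2,r_i)$ to produce $\xi_i\to 0$ with $\xi_i^{\alpha/\beta+1}g_\lambda'(\xi_i)\to-\tfrac{\alpha}{\beta}\lambda^{-\rho_1/((1-m)\beta)}$, and then exploits the factorisation $\xi^{n-1}(g_\lambda^m)'(\xi)=m\,\xi^{n-2-m\alpha/\beta}\bigl(\xi^{\alpha/\beta}g_\lambda(\xi)\bigr)^{m-1}\bigl(\xi^{\alpha/\beta+1}g_\lambda'(\xi)\bigr)$, whose explicit power $n-2-m\alpha/\beta>0$ for $\beta\ge\beta_1$ forces the boundary term to vanish along $\xi_i$. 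You avoid any direct analysis of $g_\lambda'$: you first observe that integrability of $\rho^{n-1}v$ near $0$ (when $\beta>\beta_1$) or constancy of $F$ (when $\beta=\beta_1$), combined with $\beta\epsilon^n v(\epsilon)$ having a finite limit, shows that $L:=\lim_{\epsilon\to 0^+}\epsilon^{n-1}(v^m)'(\epsilon)$ exists as a full limit; you then rule out $L\ne 0$ by the blow-up-rate contradiction $\alpha/\beta=(n-2)/m\Leftrightarrow\beta=m\beta_1<\beta_1$. Your argument therefore yields the slightly stronger fact that the boundary term converges to $0$ (not merely along a subsequence), and it replaces the MVT/L'H\^opital machinery with a one-line exponent comparison. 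Either approach is a legitimate proof of the lemma; yours is arguably cleaner, while the paper's has the side benefit of producing the explicit limit \eqref{g-lambda-dervative-x=0} that is reused in Claim 1 of the existence proof (where the initial condition \eqref{initial-condition-at-xi} is modelled on it).
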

\begin{proof}
We first claim that there exists a sequence of positive numbers $\{\xi_i\}_{i=1}^{\infty}$, $\xi_i\to 0$ as $i\to\infty$, such that 
\begin{equation}\label{g-lambda-dervative-x=0}
\lim_{i\to\infty}\xi_i^{\frac{\alpha}{\beta}+1}g_{\lambda}'(\xi_i)=-\frac{\alpha}{\beta}\lambda^{-\frac{\rho_1}{(1-m)\beta}}.
\end{equation}
In order to proof the claim we choose a sequence of positive numbers $\{r_i\}_{i=1}^{\infty}$ such that $r_i\to 0$ as $i\to\infty$. Then by \eqref{blow-up-rate-x=0} and the mean value theorem for any $i\in\Z^+$ there exists $\xi_i\in (r_i/2,r_i)$ such that
\begin{align*}
&\frac{r_i^{\frac{\alpha}{\beta}+1}g_{\lambda}(r_i)-(r_i/2)^{\frac{\alpha}{\beta}+1}g_{\lambda}(r_i/2)}{r_i/2}=\xi_i^{\frac{\alpha}{\beta}+1}g_{\lambda}'(\xi_i)+\left(1+\frac{\alpha}{\beta}\right)\xi_i^{\frac{\alpha}{\beta}}g_{\lambda}(\xi_i)\\
\Rightarrow\quad&\left|\xi_i^{\frac{\alpha}{\beta}+1}g_{\lambda}'(\xi_i)\right|\le 2r_i^{\frac{\alpha}{\beta}}g_{\lambda}(r_i)+(r_i/2)^{\frac{\alpha}{\beta}}g_{\lambda}(r_i/2)+\left(1+\frac{\alpha}{\beta}\right)\xi_i^{\frac{\alpha}{\beta}}g_{\lambda}(\xi_i)\le C\quad\forall i\in\Z^+
\end{align*}
for some constant $C>0$. Hence the sequence $\{\xi_i\}_{i=1}^{\infty}$ has a subsequece which we may assume without loss of generality to be the sequence itself such that $\xi_i^{\frac{\alpha}{\beta}+1}g_{\lambda}'(\xi_i)$ converges to some constant as $i\to\infty$. Then by \eqref{blow-up-rate-x=0} and  L'Hospital's Rule,
\begin{equation*}
\lambda^{-\frac{\rho_1}{(1-m)\beta}}=\lim_{i\to\infty}\xi_i^{\frac{\alpha}{\beta}}g_{\lambda}(\xi_i)=\lim_{i\to\infty}\frac{g_{\lambda}(\xi_i)}{\xi_i^{-\frac{\alpha}{\beta}}}=\lim_{i\to\infty}\frac{g_{\lambda}'(\xi_i)}{-\frac{\alpha}{\beta}\xi_i^{-\frac{\alpha}{\beta}-1}}=-\frac{\beta}{\alpha}\lim_{i\to\infty}\xi_i^{\frac{\alpha}{\beta}+1}g_{\lambda}'(\xi_i)
\end{equation*}
and \eqref{g-lambda-dervative-x=0} follows. By \eqref{elliptic-eqn} $g_{\lambda}$ satisfies
\begin{align}\label{ode}
&(v^m)''+\frac{n-1}{r}(v^m)'+\alpha v+\beta rv'=0, \,\, v>0\quad\forall r>0\notag\\
\Leftrightarrow\quad&(r^{n-1}(v^m)')'+\alpha r^{n-1}v+\beta r^nv'=0,\,\, v>0\qquad\forall r>0.
\end{align}
Integrating \eqref{ode} over $(\xi,r)$, 
\begin{align}\label{g-lambda-integral-eqn}
&r^{n-1}(g_{\lambda}^m)'(r)+\beta r^ng_{\lambda}(r)\notag\\
=&\xi^{n-1}(g_{\lambda}^m)'(\xi)+\beta\xi^ng_{\lambda}(\xi)+(n\beta-\alpha)\int_{\xi}^rg_{\lambda}(\rho)\rho^{n-1}\,d\rho\quad\forall r>\xi>0\notag\\
=&m\xi^{n-2-\frac{m\alpha}{\beta}}(\xi^{\frac{\alpha}{\beta}}g_{\lambda}(\xi))^{m-1}(\xi^{\frac{\alpha}{\beta}+1}g_{\lambda}'(\xi))+\beta\xi^{\frac{n\beta-\alpha}{\beta}}(\xi^{\frac{\alpha}{\beta}}g_{\lambda}(\xi))+(n\beta-\alpha)\int_{\xi}^rg_{\lambda}(\rho)\rho^{n-1}\,d\rho\quad\forall r>\xi>0
\end{align}
Since $\beta\ge\beta_1$, $n-2-\frac{m\alpha}{\beta}>0$. Putting $\xi=\xi_i$ in \eqref{g-lambda-integral-eqn}
and letting $i\to\infty$, by \eqref{beta>beta1-cond}, \eqref{blow-up-rate-x=0}, and \eqref{g-lambda-dervative-x=0}, we get that $g_{\lambda}$ satisfies \eqref{v-integral-eqn} if $\beta>\beta_1$ and $g_{\lambda}$ satisfies \eqref{v-integral-eqn2} if $\beta=\beta_1$.
\end{proof}

Similarly we have the following lemma.

\begin{lem}\label{v-integral-eqn-lem}
Let $\beta>0$. If $v_{\lambda}$ is a radially symmetric solution of \eqref{elliptic-eqn} in $\R^n$ which satisfies 
$v_{\lambda}(0)=\lambda$, then $v_{\lambda}$ satisfies \eqref{v-integral-eqn}.
\end{lem}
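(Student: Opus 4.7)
The plan is to mirror the proof of Lemma \ref{g-integral-eqn-lemma} but with a significant simplification: because $v_\lambda(0) = \lambda$ is finite and strictly positive, $v_\lambda$ is regular at the origin, and so the delicate sequence-based argument culminating in \eqref{g-lambda-dervative-x=0} is unnecessary.

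First I would observe that since $v_\lambda(0) = \lambda > 0$, equation \eqref{elliptic-eqn} is uniformly elliptic in a neighbourhood of the origin (the coefficient $m v^{m-1}$ is bounded there), so $v_\lambda$ is smooth by standard elliptic regularity. Radial symmetry then forces $v_\lambda'(0) = 0$ and consequently $(v_\lambda^m)'(0) = m\lambda^{m-1}v_\lambda'(0) = 0$. Next I would rewrite the radial form of \eqref{elliptic-eqn} in divergence form exactly as in \eqref{ode}, integrate over $(\xi,r)$ for small $\xi > 0$, and apply integration by parts to the $\beta r^n v'$ term. This produces, in direct analogy with \eqref{g-lambda-integral-eqn},
\[
r^{n-1}(v^m)'(r) + \beta r^n v(r) = \xi^{n-1}(v^m)'(\xi) + \beta\xi^n v(\xi) + (n\beta - \alpha)\int_\xi^r v(\rho)\rho^{n-1}\,d\rho.
\]
Finally I would let $\xi \to 0^+$: the first boundary term vanishes because $(v_\lambda^m)'(\xi)$ is continuous at $0$ with value $0$ and $\xi^{n-1}\to 0$; the second vanishes because $v_\lambda(\xi)\to\lambda$ while $\xi^n \to 0$; and the integral on the right converges to $\int_0^r v(\rho)\rho^{n-1}\,d\rho$ by boundedness of $v_\lambda$ near the origin. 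This yields \eqref{v-integral-eqn}.

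The only real obstacle is justifying these boundary limits, which reduces entirely to the regularity of $v_\lambda$ at the origin; but this is automatic from $v_\lambda(0) > 0$ rendering \eqref{elliptic-eqn} non-degenerate there. It is worth noting that the hypothesis $\beta \ge \beta_1$, which in Lemma \ref{g-integral-eqn-lemma} was essential to guarantee $n - 2 - m\alp/\beta > 0$ and so control the boundary contributions coming from the singular profile \eqref{blow-up-rate-x=0}, is not needed in the present setting, consistent with the weaker assumption $\beta > 0$ in the statement.
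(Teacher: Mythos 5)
Your proposal is correct and takes essentially the same route the paper intends: the paper dispatches this lemma with the single remark ``Similarly we have the following lemma,'' and the natural reading is exactly what you do --- integrate the divergence form \eqref{ode} over $(\xi,r)$, integrate $\beta\rho^n v'$ by parts, and send $\xi\to 0^+$. Your observation that the boundary analysis becomes trivial because $v_\lambda$ is smooth at the origin (so $v_\lambda'(0)=0$ by radial symmetry and hence $\xi^{n-1}(v_\lambda^m)'(\xi)\to 0$, $\beta\xi^n v_\lambda(\xi)\to 0$) correctly identifies why neither the mean-value/L'H\^opital machinery of Lemma~\ref{g-integral-eqn-lemma} nor the restriction $\beta\ge\beta_1$ is needed here.
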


\begin{lem}\label{g-lambda-compare-lemma}
Let $\beta\ge\beta_1$ and $\lambda_2>\lambda_1>0$. Then 
\begin{equation*}
g_{\lambda_2}(r)<g_{\lambda_1}(r)\quad\forall r>0.
\end{equation*} 
\end{lem}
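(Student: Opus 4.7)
The plan is to argue by contradiction, isolating the first radius $r_{0}$ at which the two solutions meet, and then subtracting the integral identities of Lemma~2.1 evaluated at $r_{0}$ to obtain a sign contradiction. First I observe that since $\lambda_{2}>\lambda_{1}>0$ implies $\lambda_{2}^{-\rho_{1}/((1-m)\beta)}<\lambda_{1}^{-\rho_{1}/((1-m)\beta)}$, the singular boundary condition \eqref{blow-up-rate-x=0} yields
\[
\lim_{r\to 0^{+}}r^{\alpha/\beta}\bigl(g_{\lambda_{1}}(r)-g_{\lambda_{2}}(r)\bigr)=\lambda_{1}^{-\rho_{1}/((1-m)\beta)}-\lambda_{2}^{-\rho_{1}/((1-m)\beta)}>0,
\]
so $g_{\lambda_{2}}(r)<g_{\lambda_{1}}(r)$ on some interval $(0,\delta)$. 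Suppose for contradiction that the desired inequality fails somewhere; then by continuity there is a first $r_{0}\in(0,\infty)$ with $g_{\lambda_{2}}(r_{0})=g_{\lambda_{1}}(r_{0})=:g_{0}$ and $g_{\lambda_{2}}(r)<g_{\lambda_{1}}(r)$ for all $r\in(0,r_{0})$.

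From this definition of $r_{0}$ the function $g_{\lambda_{1}}-g_{\lambda_{2}}$ is positive on $(0,r_{0})$ and vanishes at $r_{0}$, hence $g_{\lambda_{2}}'(r_{0})\ge g_{\lambda_{1}}'(r_{0})$. Since $(g^{m})'(r_{0})=mg_{0}^{\,m-1}g'(r_{0})$ and $g_{0}^{\,m-1}>0$, this gives
\[
\bigl(g_{\lambda_{2}}^{m}\bigr)'(r_{0})-\bigl(g_{\lambda_{1}}^{m}\bigr)'(r_{0})=mg_{0}^{\,m-1}\bigl(g_{\lambda_{2}}'(r_{0})-g_{\lambda_{1}}'(r_{0})\bigr)\ge 0.
\]

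In the case $\beta>\beta_{1}$ I apply Lemma~\ref{g-integral-eqn-lemma} to each $g_{\lambda_{i}}$ at $r=r_{0}$ and subtract. The equal-value term $\beta r_{0}^{n}\bigl(g_{\lambda_{2}}(r_{0})-g_{\lambda_{1}}(r_{0})\bigr)$ vanishes, leaving
\[
r_{0}^{n-1}\bigl[(g_{\lambda_{2}}^{m})'(r_{0})-(g_{\lambda_{1}}^{m})'(r_{0})\bigr]=(n\beta-\alpha)\int_{0}^{r_{0}}\bigl(g_{\lambda_{2}}(\rho)-g_{\lambda_{1}}(\rho)\bigr)\rho^{n-1}\,d\rho.
\]
By \eqref{beta>beta1-cond} the prefactor $n\beta-\alpha>0$ and the integrand is strictly negative on $(0,r_{0})$, so the right-hand side is strictly negative, contradicting the nonnegativity of the left-hand side. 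For $\beta=\beta_{1}$ the analogous subtraction using \eqref{v-integral-eqn2} gives
\[
r_{0}^{n-1}\bigl[(g_{\lambda_{2}}^{m})'(r_{0})-(g_{\lambda_{1}}^{m})'(r_{0})\bigr]=\beta\bigl(\lambda_{2}^{-\rho_{1}/((1-m)\beta)}-\lambda_{1}^{-\rho_{1}/((1-m)\beta)}\bigr)<0,
\]
again contradicting the derivative inequality. Hence $g_{\lambda_{2}}(r)<g_{\lambda_{1}}(r)$ for every $r>0$.

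The main (mild) technical point I expect to need care with is the derivative comparison at the first crossing: I am implicitly using that both $g_{\lambda_{i}}$ are $C^{1}$ at $r_{0}$, which is guaranteed since they are smooth solutions of the ODE on $(0,\infty)$, so the one-sided computation above is rigorous. Everything else is an immediate consequence of Lemma~\ref{g-integral-eqn-lemma} and the sign of $n\beta-\alpha$ provided by \eqref{beta>beta1-cond}.
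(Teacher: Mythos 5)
Your proposal is correct and matches the paper's proof essentially step for step: establish the strict inequality near $r=0$ from \eqref{blow-up-rate-x=0}, take the maximal interval $(0,r_0)$ on which it holds, observe that at $r_0$ the values coincide and the derivative inequality $g_{\lambda_2}'(r_0)\ge g_{\lambda_1}'(r_0)$ holds, then compare the integral identities \eqref{v-integral-eqn} (using $n\beta-\alpha>0$) or \eqref{v-integral-eqn2} to conclude $(g_{\lambda_2}^m)'(r_0)<(g_{\lambda_1}^m)'(r_0)$, which contradicts the derivative inequality since $g_{\lambda_1}(r_0)=g_{\lambda_2}(r_0)$. The only presentational difference is that you subtract the two identities at $r_0$ while the paper evaluates each side separately; the content is the same.
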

\begin{proof}
We will use a modification of the proof of Lemma 2.3 of \cite{HuK} to proof the lemma. Since $\lambda_2>\lambda_1$, by \eqref{blow-up-rate-x=0}
there exists a constant $r_1>0$ such that $g_{\lambda_2}(r)<g_{\lambda_1}(r)$ for any $0<r\le r_1$. Let $(0,r_0)$ be the maximal interval such that
\begin{equation}\label{g-lambda-compare-interval}
g_{\lambda_2}(r)<g_{\lambda_1}(r)\quad\forall 0<r<r_0.
\end{equation}
Suppose $r_0<\infty$. Then $g_{\lambda_2}(r_0)=g_{\lambda_1}(r_0)$ and $g_{\lambda_2}'(r_0)\ge g_{\lambda_1}'(r_0)$. If $\beta>\beta_1$, then by Lemma \ref{g-integral-eqn-lemma} both $g_{\lambda_1}$ and $g_{\lambda_2}$ satisfy \eqref{v-integral-eqn}. Hence if $\beta>\beta_1$, by \eqref{beta>beta1-cond}, \eqref{g-lambda-compare-interval}, and Lemma \ref{g-integral-eqn-lemma}, 
\begin{align*}
r_0^{n-1}(g_{\lambda_2}^m)'(r_0)
=&-\beta r_0^ng_{\lambda_2}(r_0)+(n\beta-\alpha)\int_0^{r_0}g_{\lambda_2}(\rho)\rho^{n-1}\,d\rho\\
<&-\beta r_0^ng_{\lambda_1}(r_0)+(n\beta-\alpha)\int_0^{r_0}g_{\lambda_1}(\rho)\rho^{n-1}\,d\rho\\
=&r_0^{n-1}(g_{\lambda_1}^m)'(r_0).
\end{align*}
If $\beta=\beta_1$, by Lemma \ref{g-integral-eqn-lemma} both $g_{\lambda_1}$ and $g_{\lambda_2}$ satisfy \eqref{v-integral-eqn2}. Hence by \eqref{g-lambda-compare-interval}, 
\begin{equation*}
r_0^{n-1}(g_{\lambda_2}^m)'(r_0)=-r_0^ng_{\lambda_2}(r_0)+\beta\lambda_2^{-\frac{\rho_1}{(1-m)\beta}}
<-r_0^ng_{\lambda_1}(r_0)+\beta\lambda_1^{-\frac{\rho_1}{(1-m)\beta}}=r_0^{n-1}(g_{\lambda_1}^m)'(r_0).
\end{equation*}
Hence for any $\beta\ge\beta_1$,
\begin{align*}
&(g_{\lambda_2}^m)'(r_0)<(g_{\lambda_1}^m)'(r_0)\notag\\
\Rightarrow\quad&mg_{\lambda_2}^{m-1}(r_0)g_{\lambda_2}'(r_0)<mg_{\lambda_1}^{m-1}(r_0)g_{\lambda_2}'(r_0)\notag\\
\Rightarrow\quad&g_{\lambda_2}'(r_0)<g_{\lambda_1}'(r_0)
\end{align*} 
and contradiction arises. Thus $r_0=\infty$ and the lemma follows.
\end{proof}

By direct computation $\mathcal{C}(r)$ satisfies \eqref{v-integral-eqn} and \eqref{ode}. 
Since $\mathcal{C}(r)\to\infty$ as $r\to 0$ and $r^{\frac{\alpha}{\beta}}\mathcal{C}(r)\to 0$ as $r\to 0$, by Lemma \ref{g-integral-eqn-lemma}, Lemma \ref{v-integral-eqn-lem}, and an argument similar to the proof of Lemma \ref{g-lambda-compare-lemma}
we have the following result.

\begin{lem}\label{comparison-lemma}
Let $\beta\ge\beta_1$. Then 
\begin{equation*}\label{v-12-compare}
v_{\lambda_1}(r)<v_{\lambda_2}(r)\quad\forall r\ge 0, \lambda_2>\lambda_1>0
\end{equation*} 
and 
\begin{equation}\label{v-g-lambda-compare}
v_{\lambda}(r)<\mathcal{C}(r)<g_{\lambda}(r)\quad\forall r>0, \lambda>0.
\end{equation} 
\end{lem}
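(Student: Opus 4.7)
The plan is to imitate the first-crossing argument from the proof of Lemma \ref{g-lambda-compare-lemma}. In each of the three inequalities I assume by contradiction the existence of a first intersection point $r_0>0$, invoke the integral identity of Lemma \ref{v-integral-eqn-lem} (for $v_\lambda$) or Lemma \ref{g-integral-eqn-lemma} (for $g_\lambda$), together with the fact (verifiable by direct computation using $C_\ast=2m(n-2-nm)/[(1-m)\rho_1]$) that $\mathcal{C}$ also satisfies \eqref{v-integral-eqn} and \eqref{ode} for every $\beta$, and derive a contradiction between the required ordering of the derivatives at $r_0$ and the one forced by subtracting the integral identities. The essential sign information $n\beta-\alpha>0$ for $\beta>\beta_1$ is given by \eqref{beta>beta1-cond}; the borderline $\beta=\beta_1$ is handled separately.

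For $v_{\lambda_1}<v_{\lambda_2}$ when $\lambda_1<\lambda_2$: on the maximal interval $[0,r_0)$ on which the inequality holds, if $r_0<\infty$ then $v_{\lambda_1}(r_0)=v_{\lambda_2}(r_0)$ and $v_{\lambda_1}'(r_0)\ge v_{\lambda_2}'(r_0)$. When $\beta>\beta_1$, subtracting \eqref{v-integral-eqn} for $v_{\lambda_2}$ and $v_{\lambda_1}$ and using strict positivity of the integral on $(0,r_0)$ yields $(v_{\lambda_2}^m)'(r_0)>(v_{\lambda_1}^m)'(r_0)$, i.e.\ $v_{\lambda_2}'(r_0)>v_{\lambda_1}'(r_0)$, a contradiction. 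When $\beta=\beta_1$, \eqref{v-integral-eqn} collapses to the first-order ODE $(v^m)'(r)+\beta rv(r)=0$, which has a locally Lipschitz right-hand side for $v>0$; hence distinct initial values at $r=0$ yield solutions that cannot cross, and the strict ordering is immediate. An identical argument applied to the pair $(\mathcal{C},v_\lambda)$, together with the observation that $v_\lambda(0)=\lambda<\infty$ while $\mathcal{C}(r)\to\infty$ as $r\to 0$ forces $v_\lambda<\mathcal{C}$ near the origin, proves $v_\lambda<\mathcal{C}$.

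For $\mathcal{C}<g_\lambda$: from \eqref{blow-up-rate-x=0} and the identity $r^{\alpha/\beta}\mathcal{C}(r)=C_\ast^{1/(1-m)}r^{\rho_1/[(1-m)\beta]}\to 0$ as $r\to 0$, the ratio $g_\lambda/\mathcal{C}$ tends to $\infty$ at the origin, so $g_\lambda>\mathcal{C}$ near $0$. Assuming a first crossing at $r_0$, one has $g_\lambda'(r_0)\le\mathcal{C}'(r_0)$. If $\beta>\beta_1$, Lemma \ref{g-integral-eqn-lemma} gives that $g_\lambda$ satisfies \eqref{v-integral-eqn}; subtracting from the identity for $\mathcal{C}$ produces $g_\lambda'(r_0)>\mathcal{C}'(r_0)$, a contradiction. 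If $\beta=\beta_1$, Lemma \ref{g-integral-eqn-lemma} instead yields \eqref{v-integral-eqn2} for $g_\lambda$ with right-hand side $\beta\lambda^{-\rho_1/[(1-m)\beta]}>0$, while $\mathcal{C}$ satisfies \eqref{v-integral-eqn} whose right-hand side vanishes; subtracting gives $(g_\lambda^m)'(r_0)-(\mathcal{C}^m)'(r_0)=\beta\lambda^{-\rho_1/[(1-m)\beta]}/r_0^{n-1}>0$, the same contradiction. The main obstacle is the borderline case $\beta=\beta_1$ in the first claim, where the integral equation no longer delivers a strict inequality; it is resolved by the first-order ODE uniqueness observation above. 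A secondary technical point is the direct verification that $\mathcal{C}$ satisfies \eqref{v-integral-eqn}, which requires integrability of $\mathcal{C}(\rho)\rho^{n-1}$ near the origin, and this is guaranteed precisely by the standing assumption $m<(n-2)/n$.
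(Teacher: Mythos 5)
Your proposal is correct and follows the paper's intended route: the paper's proof is a one-line appeal to Lemma \ref{g-integral-eqn-lemma}, Lemma \ref{v-integral-eqn-lem}, the fact that $\mathcal{C}$ satisfies \eqref{v-integral-eqn} and \eqref{ode}, and "an argument similar to the proof of Lemma \ref{g-lambda-compare-lemma}," and your first-crossing arguments with the integral identities are exactly that. You have, moreover, correctly identified a detail the paper's sketch glosses over: at $\beta=\beta_1$ the integral identity \eqref{v-integral-eqn} for both $v_\lambda$ and $\mathcal{C}$ has right-hand side zero, so subtracting at a first crossing gives only equality of the derivatives, not the strict inequality that the Lemma \ref{g-lambda-compare-lemma} template produces. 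Your resolution — observing that \eqref{v-integral-eqn} then collapses to the first-order ODE $(v^m)'+\beta r v=0$ (equivalently $v'=-\tfrac{\beta}{m}rv^{2-m}$, locally Lipschitz in $v$, or even explicitly $v^{m-1}=v(0)^{m-1}+\tfrac{(1-m)\beta}{2m}r^2$) so distinct trajectories cannot cross — is correct and is precisely what is needed to make the borderline case rigorous; the $\mathcal{C}<g_\lambda$ comparison at $\beta=\beta_1$ still yields a strict sign because $g_\lambda$ satisfies \eqref{v-integral-eqn2} with a strictly positive constant whereas $\mathcal{C}$ satisfies \eqref{v-integral-eqn} with vanishing right-hand side, as you say.
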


\begin{thm}\label{uniqueness-thm}
Let $\beta\ge\beta_1$ and $\lambda>0$. Suppose $g_1$ and $g_2$ are two radially symmetric solutions of \eqref{elliptic-eqn} in
$\R^n\setminus\{0\}$ that satisfies
\eqref{blow-up-rate-x=0}. Then $g_1=g_2$ on $\R^n\setminus\{0\}$.
\end{thm}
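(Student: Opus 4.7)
My plan is to argue by contradiction, extending the crossing-point analysis used in the proof of Lemma \ref{g-lambda-compare-lemma}. The new difficulty is that $g_1$ and $g_2$ share the same leading behaviour \eqref{blow-up-rate-x=0} at the origin, so they are not automatically ordered near $r=0$. The starting observation is that at any regular point $r_0>0$ the ODE \eqref{ode} is locally Lipschitz in $(v,v')$ on $\{v>0\}$, so standard ODE uniqueness tells us that $g_1(r_0)=g_2(r_0)$ together with $g_1'(r_0)=g_2'(r_0)$ forces $g_1\equiv g_2$ on $(0,\infty)$.

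Assume $g_1\not\equiv g_2$, and, after swapping roles if necessary, choose $r_*>0$ with $g_1(r_*)>g_2(r_*)$. Let $(a,b)\subset(0,\infty)$ be the maximal open interval containing $r_*$ on which $g_1>g_2$. To rule out $b<\infty$: at $b$ one has $g_1(b)=g_2(b)$ and $g_1'(b)\le g_2'(b)$. Applying the partially integrated form \eqref{g-lambda-integral-eqn} with $\xi=a$ (if $a>0$), or the integral equations \eqref{v-integral-eqn}--\eqref{v-integral-eqn2} (if $a=0$), to both $g_1$ and $g_2$ and subtracting, I obtain
\begin{equation*}
b^{n-1}\bigl[(g_1^m)'(b)-(g_2^m)'(b)\bigr] = a^{n-1}\bigl[(g_1^m)'(a)-(g_2^m)'(a)\bigr] + (n\beta-\alpha)\int_a^b(g_1-g_2)(\rho)\rho^{n-1}\,d\rho,
\end{equation*}
with the boundary term at $a$ vanishing when $a=0$. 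Since $n\beta\ge\alpha$ by \eqref{beta>beta1-cond} and $g_1>g_2$ on $(a,b)$, the right-hand side is non-negative, and it is strictly positive except in the borderline subcase $\beta=\beta_1$ with $a=0$; in that subcase the identity reduces to $(g_1^m)'(b)=(g_2^m)'(b)$, hence $g_1'(b)=g_2'(b)$, and ODE uniqueness forces $g_1\equiv g_2$, a contradiction. Otherwise strict inequality gives $g_1'(b)>g_2'(b)$, contradicting $g_1'(b)\le g_2'(b)$.

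Next I would rule out $a>0$. If $a>0$, then $g_1(a)=g_2(a)$, and ODE uniqueness combined with $g_1\not\equiv g_2$ yields $g_1'(a)>g_2'(a)$; so $g_2>g_1$ strictly on some maximal interval $(a',a)\subset(0,a)$ immediately to the left of $a$. Applying the previous step with the roles of $g_1,g_2$ exchanged to this finite-length interval (whose right endpoint $a$ is finite) produces a contradiction.

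The remaining case is $(a,b)=(0,\infty)$, i.e. $g_1>g_2$ throughout $(0,\infty)$. This is the main obstacle. The strategy is to combine the decay of $g_i$ at infinity, obtained from Lemma \ref{comparison-lemma} and integral-equation arguments in the spirit of \eqref{v-infty-behaviour}, with the subtracted integral equation. For $\beta=\beta_1$, subtracting \eqref{v-integral-eqn2} for $g_1$ and $g_2$ gives
\begin{equation*}
r^{n-1}\bigl[(g_1^m)'(r)-(g_2^m)'(r)\bigr] + \beta_1 r^n(g_1-g_2)(r) = 0 \qquad \forall\, r>0,
\end{equation*}
and integrating from $r$ to $\infty$ using $g_i^m(r)\to 0$ as $r\to\infty$ yields $(g_1^m-g_2^m)(r)=\beta_1\int_r^\infty\rho(g_1-g_2)(\rho)\,d\rho$; letting $r\to 0^+$ and matching against the near-origin estimate $(g_1-g_2)(r)=o(r^{-\alpha/\beta})$ forces an incompatibility of divergence rates. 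For $\beta>\beta_1$ the subtracted form of \eqref{v-integral-eqn} has a strictly positive right-hand side, and a parallel growth-rate comparison at infinity produces the contradiction. By symmetry, the case $g_1(r_*)<g_2(r_*)$ is handled identically, completing the proof.
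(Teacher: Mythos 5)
Your crossing-point analysis handles the subcases where the maximal interval $(a,b)$ on which $g_1>g_2$ has a finite endpoint, and that part is essentially sound. The genuine gap is the case you yourself flag as ``the main obstacle'': $(a,b)=(0,\infty)$, i.e.\ the two solutions never cross. For $\beta=\beta_1$ the subtracted relation $r^{n-1}\bigl[(g_1^m)'-(g_2^m)'\bigr]+\beta_1 r^n(g_1-g_2)=0$ can indeed be pushed to a contradiction (writing $h=g_1^m-g_2^m$, one gets $h'/h=-(\beta_1/m)\,r\,\xi(r)^{1-m}$ with $\xi$ between $g_2$ and $g_1$, and near $r=0$ this forces $h$ to blow up super-polynomially, incompatible with $h(r)=O(r^{-m\alpha/\beta})$), but you did not carry this out. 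More seriously, for $\beta>\beta_1$ your proposal only says ``a parallel growth-rate comparison at infinity produces the contradiction,'' and this is not a proof. The subtracted version of \eqref{v-integral-eqn} reads $r^{n-1}h'(r)+\beta r^n(g_1-g_2)(r)=(n\beta-\alpha)\int_0^r(g_1-g_2)\rho^{n-1}\,d\rho$, whose right-hand side is positive but whose left-hand side has competing signs; extracting a contradiction from the decay at infinity requires control of $r^{n-1}h'(r)$ that you have not established, and the crude bound $h(r)=O(r^{-2m/(1-m)})$ together with mean-value sampling of $h'$ is not strong enough (since $n-1>2m/(1-m)$ in the range $0<m<\frac{n-2}{n}$). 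So as written, the argument does not close.

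The paper sidesteps this entirely with a scaling trick: set $w_i(r)=\lambda_i^{2/(1-m)}g_1(\lambda_i r)$ with $\lambda_i\downarrow 1$. Each $w_i$ solves \eqref{elliptic-eqn} and has blow-up constant $(\lambda_i\lambda)^{-\rho_1/((1-m)\beta)}$, which is \emph{strictly smaller} than that of $g_2$, so Lemma~\ref{g-lambda-compare-lemma} gives $w_i<g_2$ on $(0,\infty)$ with no crossing analysis needed; letting $i\to\infty$ yields $g_1\le g_2$, and symmetry finishes. This converts the delicate ``equal blow-up rates'' comparison into the already-proved strict one, which is exactly the idea missing from your proposal. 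If you want to salvage your route, you would need to supply the $(0,\infty)$ case with a real estimate (and in particular a workable argument for $\beta>\beta_1$); otherwise, the scaling reduction to Lemma~\ref{g-lambda-compare-lemma} is both shorter and safer.
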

\begin{proof}
We choose a monotone decreasing sequence $\lambda_i>1$ for all $i\in\Z^+$ such that $\lambda_i\to 1$ as $i\to\infty$. Let
\begin{equation*}
w_i(r)=\lambda_i^{\frac{2}{1-m}}g_1(\lambda_i r)
\end{equation*}
Then $w_i$ satisfies \eqref{elliptic-eqn} in $\R^n\setminus\{0\}$ and
\begin{equation}\label{w-i-monotonicity}
\lim_{r\to 0}r^{\frac{\alpha}{\beta}}w_i(r)=\lambda_i^{-\frac{\rho_1}{(1-m)\beta}}\lim_{r\to 0}(\lambda_i r)^{\frac{\alpha}{\beta}}g_i(\lambda_i r)
=(\lambda_i\lambda)^{-\frac{\rho_1}{(1-m)\beta}}<(\lambda_{i+1}\lambda)^{-\frac{\rho_1}{(1-m)\beta}}<\lambda^{-\frac{\rho_1}{(1-m)\beta}}\quad\forall i\in\Z^+.
\end{equation}
By \eqref{w-i-monotonicity} and Lemma \ref{g-lambda-compare-lemma}, 
\begin{align*}
&w_i(r)<w_{i+1}(r)<g_2(r)\quad\forall r>0, i\in\Z^+\\
\Rightarrow\quad&g_1(r)\le g_2(r)\qquad\qquad\quad\forall r>0\quad\mbox{ as }i\to\infty.
\end{align*}
Similarly by interchanging the role of $g_1$ and $g_2$ in the above argument we get $g_1(r)\ge g_2(r)$ for all $r>0$. Hence $g_1=g_2$ on $\R^n\setminus\{0\}$ and the theorem follows.
\end{proof}

We are now ready to prove Theorem \ref{blow-up-self-similar-soln-thm}.

\noindent{\ni{\it Proof of Theorem \ref{blow-up-self-similar-soln-thm}:}}
By Theorem \ref{uniqueness-thm} we only need to prove existence of radially symmetric solution of \eqref{elliptic-eqn} in $\R^n\setminus\{0\}$ or solution of \eqref{ode} that satisfies \eqref{blow-up-rate-x=0}, \eqref{g-lambda'-negative} and \eqref{g-lower-bd}, when $\beta\ge\frac{m\rho_1}{n-2-nm}$. Let $\beta\ge\frac{m\rho_1}{n-2-nm}$.

\noindent{\bf Claim 1}: For any $\xi_0>0$, there exists a radially symmetric solution 
$g$ of \eqref{elliptic-eqn} in $\R^n\setminus B_{\xi_0}$ which satisfies 
\begin{equation}\label{initial-condition-at-xi}
g(\xi_0)=\lambda^{-\frac{\rho_1}{(1-m)\beta}}\xi_0^{-\frac{\alpha}{\beta}}\quad\mbox{ and }\quad
g'(\xi_0)=-\frac{\alpha}{\beta}\lambda^{-\frac{\rho_1}{(1-m)\beta}}\xi_0^{-\frac{\alpha}{\beta}-1}.
\end{equation}
In order to prove this claim we first observe that by the standard O.D.E. theory there exist $\3>0$ and a solution $g$ of \eqref{ode} in $(\xi_0,\xi_0+\3)$ which satisfies \eqref{initial-condition-at-xi}. Let $(\xi_0,R_0)$ be the maximal interval of existence of solution of \eqref{ode} which satisfies \eqref{initial-condition-at-xi}.
Let $\2{w}(r)=r^{\frac{\alpha}{\beta}}g(r)$,
$h_1(r)=g(r)+(\beta/\alpha)rg'(r)$, and 
\begin{equation*}
f_1(r)=g(r)^{m-1}\mbox{exp}\,\left(\frac{\beta}{m}\int_{\xi_0}^r\rho g(\rho)^{1-m}\,d\rho\right). 
\end{equation*}
By \eqref{initial-condition-at-xi}, $h_1(\xi_0)=0$. As observed in \cite{Hs1}, $h_1$ satisfies
\begin{equation*}\label{h1-eqn}
h_1'+\left(\frac{n-2-\frac{m\alpha}{\beta}}{r}-(1-m)\frac{g'}{g}
+\frac{\beta}{m}rg^{1-m}
\right)h_1=\frac{n-2-\frac{m\alpha}{\beta}}{r}g\ge 0
\end{equation*}
in $(\xi_0,R_0)$. Hence
\begin{align}
&(r^{n-2-\frac{m\alpha}{\beta}}f_1(r)h_1(r))'\ge 0\qquad\qquad\qquad\qquad\quad\forall \xi_0<r<R_0\notag\\
\Rightarrow\quad&r^{n-2-\frac{m\alpha}{\beta}}f_1(r)h_1(r)\ge \xi_0^{n-2-\frac{m\alpha}{\beta}}f_1(\xi_0)h_1(\xi_0)=0\quad\forall \xi_0<r<R_0\notag\\
\Rightarrow\quad&h_1(r)\ge 0\qquad\qquad\qquad\qquad\qquad\qquad\qquad\forall \xi_0<r<R_0\label{h1-positive}\\
\Rightarrow\quad&\2{w}'(r)=\frac{\alpha}{\beta}r^{\frac{\alpha}{\beta}-1}h_1(r)\ge 0\qquad\qquad\qquad\qquad\,\forall \xi_0<r<R_0\notag\\
\Rightarrow\quad&\2{w}(r)=r^{\frac{\alpha}{\beta}}g(r)\ge \2{w}(\xi_0)=\lambda^{-\frac{\rho_1}{(1-m)\beta}}\qquad\qquad\,\,\forall \xi_0<r<R_0.\label{w-positive}
\end{align}
By \eqref{ode}, \eqref{initial-condition-at-xi}, and \eqref{h1-positive},
\begin{align}\label{g'-negative}
&(r^{n-1}(g^m)')'=-\alpha r^{n-1}h_1(r)\le 0\qquad\forall \xi_0<r<R_0\notag\\
\Rightarrow\quad&r^{n-1}(g^m)'(r)\le\xi_0^{n-1}(g^m)'(\xi_0)<0\quad\,\forall \xi_0<r<R_0\notag\\
\Rightarrow\quad&g'(r)<0\qquad\qquad\qquad\qquad\qquad\,\,\,\forall \xi_0\le r<R_0.
\end{align}
By \eqref{w-positive} and \eqref{g'-negative},
\begin{equation}\label{g-upper-lower-bd}
\lambda^{-\frac{\rho_1}{(1-m)\beta}}r^{-\frac{\alpha}{\beta}}<g(r)<g(\xi_0)\quad\forall \xi_0<r<R_0.
\end{equation}
Suppose $R_0<\infty$. Since $g$ satisfies \eqref{g-lambda-integral-eqn} with $\xi=\xi_0$, $r\in (\xi_0,R_0)$, by \eqref{g-upper-lower-bd},
there exists a  constant $C>0$ independent of $R_0$ such that
\begin{equation}\label{g'-bd}
|g'(r)|\le C(1+R_0^n)\quad\forall \xi_0<r<R_0.
\end{equation} 
By \eqref{g-upper-lower-bd} and \eqref{g'-bd} we can extend $g$ to a solution of \eqref{ode} in $(\xi_0, R_1)$ that satisfies 
\eqref{initial-condition-at-xi}  for some $R_1>R_0$. This contradicts the choice of $R_0$. Hence $R_0=\infty$ and claim 1 follows.

By claim 1 for any $i\in\Z^+$ there exists a radially symmetric solution $g_i$ of \eqref{elliptic-eqn} in $\R^n\setminus B_{1/i}$ or equivalently a solution of \eqref{ode} in $(1/i,\infty)$ which satisfies 
\begin{equation}\label{gi-initial-condition}
g_i(1/i)=\lambda^{-\frac{\rho_1}{(1-m)\beta}}i^{\frac{\alpha}{\beta}}\quad\mbox{ and }\quad
g_i'(1/i)=-\frac{\alpha}{\beta}\lambda^{-\frac{\rho_1}{(1-m)\beta}}i^{\frac{\alpha}{\beta}+1}.
\end{equation}
Let $w_i(r)=r^{\frac{\alpha}{\beta}}g_i(r)$. By the proof of claim 1, 
\begin{equation}\label{gi'-negative}
g_i'(r)<0\quad\forall r>1/i, i\in\Z^+
\end{equation}
and
\begin{align}
&w_i'(1/i)=0\quad\mbox{ and }\quad w_i'(r)\ge 0\quad\forall r>1/i, i\in\Z^+\notag\\
\Rightarrow\quad&w_i(r)\ge w_i(1/i)=\lambda^{-\frac{\rho_1}{(1-m)\beta}}\qquad\quad\forall r>1/i, i\in\Z^+
\label{wi-lower-bd}\\
\Rightarrow\quad&g_i(r)\ge \lambda^{-\frac{\rho_1}{(1-m)\beta}}r^{-\frac{\alpha}{\beta}}\qquad\qquad\qquad\forall r>1/i, i\in\Z^+.
\label{gi-uniform-lower-bd}
\end{align}
By direct computation (cf. \cite{Hs1}),
\begin{equation}\label{qi-eqn}
\left(\frac{w_i'}{w_i}\right)'+\frac{n-1-\frac{2m\alpha}{\beta}}{r}\cdot\frac{w_i'}{w_i}
+m\left(\frac{w_i'}{w_i}\right)^2
+\frac{\beta r^{-1-\frac{\rho_1}{\beta}}w_i'}{mw_i^m}
=\frac{\alpha}{\beta}\cdot\frac{n-2-\frac{m\alpha}{\beta}}{r^2}\quad\forall r>1/i,i\in\Z^+.
\end{equation}
Let $s=\log r$ and $z_i=w_{i,s}/w_i$. Then $z_i(-\log i)=0$ and $z_i(s)\ge 0$ for any $s>-\log i$. By \eqref{qi-eqn} and a direct computation,
\begin{equation}\label{zi-eqn}
z_{i,s}+\left(n-2-\frac{2m\alpha}{\beta}\right)z_i+mz_i^2+\frac{\beta}{m}e^{-\frac{\rho_1}{\beta}s}w_i^{1-m}z_i=\frac{\alpha}{\beta}\left(n-2-\frac{m\alpha}{\beta}\right)\quad\forall s>-\log i,i\in\Z^+
\end{equation} 
By \eqref{wi-lower-bd} and \eqref{zi-eqn},
\begin{align}\label{zi-integral-ineqn}
&z_{i,s}+m(z_i^2+2C_1z_i)+\frac{\beta}{m}\lambda^{-\frac{\rho_1}{\beta}}e^{-\frac{\rho_1}{\beta}s}z_i\le C_2\quad\forall s>-\log i,i\in\Z^+\notag\\
\Rightarrow\quad&z_{i,s}+\frac{\beta}{m}\lambda^{-\frac{\rho_1}{\beta}}e^{-\frac{\rho_1}{\beta}s}z_i\le z_{i,s}+m(z_i+C_1)^2+\frac{\beta}{m}\lambda^{-\frac{\rho_1}{\beta}}e^{-\frac{\rho_1}{\beta}s}z_i\le C_2'\quad\forall s>-\log i,i\in\Z^+
\end{align}
where $C_1=\frac{1}{2m}\left(n-2-\frac{2m\alpha}{\beta}\right)$, $C_2=\frac{\alpha}{\beta}\left(n-2-\frac{m\alpha}{\beta}\right)>0$ and $C_2'=C_2+mC_1^2$.
Let 
\begin{equation*}
a(s)=-\frac{\beta^2}{m\rho_1}\lambda^{-\frac{\rho_1}{\beta}}e^{-\frac{\rho_1}{\beta}s}.
\end{equation*}
Then by \eqref{zi-integral-ineqn},
\begin{equation}\label{zi-integral-ineqn2}
z_i(s)\le C_2'\int_{-\log i}^se^{a(s')-a(s)}\,ds'\quad\forall s>-\log i,i\in\Z^+.
\end{equation} 
By the mean value theorem for any $s>s'>-\log i$ there exists a constant $s_1\in (s',s)$ such that
\begin{equation}\label{a-difference}
a(s')-a(s)=\frac{\beta}{m}\lambda^{-\frac{\rho_1}{\beta}}e^{-\frac{\rho_1}{\beta}s_1}(s'-s)
\le\frac{\beta}{m}\lambda^{-\frac{\rho_1}{\beta}}e^{-\frac{\rho_1}{\beta}s}(s'-s).
\end{equation}  
By  \eqref{zi-integral-ineqn2} and \eqref{a-difference}, 
\begin{align}\label{gi-uniform-upper-bd}
&z_i(s)\le C_2'\int_{-\log i}^s\mbox{exp}\,\left(\frac{\beta}{m}\lambda^{-\frac{\rho_1}{\beta}}e^{-\frac{\rho_1}{\beta}s}(s'-s)\right)\,ds'
\le C_0\lambda^{\frac{\rho_1}{\beta}}e^{\frac{\rho_1}{\beta}s}\quad\forall s>-\log i,i\in\Z^+\notag\\
\Rightarrow\quad&\frac{rw_{i,r}}{w_i}\le C_0\lambda^{\frac{\rho_1}{\beta}}r^{\frac{\rho_1}{\beta}}\qquad\qquad\qquad\qquad\forall r>1/i,i\in\Z^+\notag\\
\Rightarrow\quad&w_i(r)\le\lambda^{-\frac{\rho_1}{(1-m)\beta}}\mbox{exp}\,\left(\frac{\beta C_0}{\rho_1}\lambda^{\frac{\rho_1}{\beta}}r^{\frac{\rho_1}{\beta}}\right)\quad\forall r>1/i,i\in\Z^+\notag\\
\Rightarrow\quad&g_i(r)\le\lambda^{-\frac{\rho_1}{(1-m)\beta}}r^{-\frac{\alpha}{\beta}}\mbox{exp}\,\left(\frac{\beta C_0}{\rho_1}\lambda^{\frac{\rho_1}{\beta}}r^{\frac{\rho_1}{\beta}}\right)\quad\forall r>1/i,i\in\Z^+
\end{align}  
where $C_0=\frac{mC_2'}{\beta}$. By \eqref{gi-uniform-lower-bd} and \eqref{gi-uniform-upper-bd}, the equation
\eqref{elliptic-eqn} for the sequence $\{g_i\}_{i=1}^{\infty}$ is uniformly elliptic on every compact subset of $\R^n\setminus\{0\}$. Hence by standard Schauder's estimates \cite{GT} the sequence $\{g_i\}_{i=1}^{\infty}$ is uniformly continuous in $C^2(K)$ for any compact set $K\subset\R^n\setminus\{0\}$. By the Ascoli Theorem and a diagonalization argument the sequence $\{g_i\}_{i=1}^{\infty}$ has a subsequence which we may assume without loss of generality to be the sequence itself that converges uniformly in $C^2(K)$ for any compact set $K\subset\R^n\setminus\{0\}$ to some  function $g_{\lambda}\in C^2(\R^n\setminus\{0\})$ as $i\to\infty$. Then $g_{\lambda}$
is a radially symmetric solution of \eqref{elliptic-eqn} in $\R^n\setminus\{0\}$. Letting $i\to\infty$ in \eqref{gi-uniform-lower-bd} and \eqref{gi-uniform-upper-bd},
\begin{align}\label{g-lambda-upper-lower-bd}
&\lambda^{-\frac{\rho_1}{(1-m)\beta}}r^{-\frac{\alpha}{\beta}}\le g_{\lambda}(r)\le\lambda^{-\frac{\rho_1}{(1-m)\beta}}r^{-\frac{\alpha}{\beta}}\mbox{exp}\,\left(\frac{\beta C_0}{\rho_1}\lambda^{\frac{\rho_1}{\beta}}r^{\frac{\rho_1}{\beta}}\right)\quad\forall r>0,i\in\Z^+\notag\\
\Rightarrow\quad&\lambda^{-\frac{\rho_1}{(1-m)\beta}}\le r^{\frac{\alpha}{\beta}}g_{\lambda}(r)\le\lambda^{-\frac{\rho_1}{(1-m)\beta}}\mbox{exp}\,\left(\frac{\beta C_0}{\rho_1}\lambda^{\frac{\rho_1}{\beta}}r^{\frac{\rho_1}{\beta}}\right)\qquad\,\,\forall r>0,i\in\Z^+.
\end{align}
Letting $r\to 0$ in \eqref{gi'-negative} and \eqref{g-lambda-upper-lower-bd} we get \eqref{blow-up-rate-x=0} and \eqref{g-lambda'-negative} and the theorem follows.

{\hfill$\square$\vspace{6pt}}

\begin{cor}\label{g-lambda12-formula}
Let $n\ge 3$, $0<m<\frac{n-2}{n}$, $\rho_1>0$, $\lambda_1>0$, $\lambda_2>0$, $\beta\ge\beta_1$ and $\alpha=\frac{2\beta+\rho_1}{1-m}$. Let $g_{\lambda_1}$, $g_{\lambda_2}$, be two radially symmetric solution  of \eqref{elliptic-eqn} in $\R^n\setminus\{0\}$ that satisfies \eqref{blow-up-rate-x=0} with $\lambda$ being replaced by $\lambda_1$, $\lambda_2$ respectively. Then 
\begin{equation*}
g_{\lambda_2}(x)=(\lambda_2/\lambda_1)^{\frac{2}{1-m}}g_{\lambda_1}((\lambda_2/\lambda_1) x)\quad\forall x\in\R^n\setminus\{0\}.
\end{equation*}
\end{cor}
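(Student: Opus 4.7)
The plan is to verify the identity by a scaling-and-uniqueness argument. The equation \eqref{elliptic-eqn} enjoys a one-parameter scaling invariance: for any $\mu>0$, if $v$ solves \eqref{elliptic-eqn} on $\R^n\setminus\{0\}$, then so does $v_\mu(x):=\mu^{2/(1-m)}v(\mu x)$. Indeed, writing $y=\mu x$, a direct chain-rule computation gives $\Delta v_\mu^m(x)=\mu^{2m/(1-m)+2}(\Delta v^m)(y)$ and $\alpha v_\mu(x)+\beta x\cdot\nabla v_\mu(x)=\mu^{2/(1-m)}\bigl(\alpha v(y)+\beta y\cdot\nabla v(y)\bigr)$, and the exponents match precisely because $2m/(1-m)+2=2/(1-m)$.

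First I would apply this scaling with $\mu=\lambda_2/\lambda_1$ to the known solution $g_{\lambda_1}$, defining
\begin{equation*}
\tilde{g}(x):=(\lambda_2/\lambda_1)^{\frac{2}{1-m}}g_{\lambda_1}\bigl((\lambda_2/\lambda_1)x\bigr),
\end{equation*}
which by the previous paragraph is a radially symmetric solution of \eqref{elliptic-eqn} on $\R^n\setminus\{0\}$. Next I would check the blow-up rate at the origin. Using $\alpha/\beta=2/(1-m)+\rho_1/((1-m)\beta)$ (from $\alpha=(2\beta+\rho_1)/(1-m)$), we get
\begin{equation*}
|x|^{\alpha/\beta}\tilde{g}(x)=(\lambda_2/\lambda_1)^{\frac{2}{1-m}-\frac{\alpha}{\beta}}\,\bigl|(\lambda_2/\lambda_1)x\bigr|^{\alpha/\beta}g_{\lambda_1}\bigl((\lambda_2/\lambda_1)x\bigr)\longrightarrow(\lambda_2/\lambda_1)^{-\frac{\rho_1}{(1-m)\beta}}\lambda_1^{-\frac{\rho_1}{(1-m)\beta}}=\lambda_2^{-\frac{\rho_1}{(1-m)\beta}}
\end{equation*}
as $|x|\to 0$. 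Therefore $\tilde g$ satisfies \eqref{blow-up-rate-x=0} with parameter $\lambda_2$.

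Finally, since $\beta\ge\beta_1$, the uniqueness half of Theorem \ref{blow-up-self-similar-soln-thm} (proved in Theorem \ref{uniqueness-thm}) forces $\tilde g\equiv g_{\lambda_2}$, giving the stated identity. There is essentially no obstacle here: the whole content of the argument is the scaling invariance, and the only mild arithmetic to watch is the exponent cancellation $2/(1-m)-\alpha/\beta=-\rho_1/((1-m)\beta)$, which is exactly what makes the boundary constant transform by the factor $(\lambda_2/\lambda_1)^{-\rho_1/((1-m)\beta)}$ needed to match $g_{\lambda_2}$.
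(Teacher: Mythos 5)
Your proposal is correct and follows essentially the same route as the paper: define $\tilde g(x)=(\lambda_2/\lambda_1)^{2/(1-m)}g_{\lambda_1}((\lambda_2/\lambda_1)x)$, observe by scaling invariance that it solves \eqref{elliptic-eqn} in $\R^n\setminus\{0\}$, verify that the blow-up constant at the origin becomes $\lambda_2^{-\rho_1/((1-m)\beta)}$, and conclude $\tilde g\equiv g_{\lambda_2}$ by the uniqueness result (Theorem \ref{uniqueness-thm}).
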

\begin{proof}
Let 
\begin{equation*}
\4{g}(x)=(\lambda_2/\lambda_1)^{\frac{2}{1-m}}g_{\lambda_1}((\lambda_2/\lambda_1) x)\quad\forall x\in\R^n\setminus\{0\}.
\end{equation*}
Then $\4{g}$ is a solution of \eqref{elliptic-eqn} in $\R^n\setminus\{0\}$ and
\begin{equation*}
\lim_{|x|\to 0}|x|^{\frac{\alpha}{\beta}}\4{g}(x)=(\lambda_2/\lambda_1)^{-\frac{\rho_1}{(1-m)\beta}}
\lim_{|x|\to 0}((\lambda_2/\lambda_1) |x|)^{\frac{\alpha}{\beta}}g_{\lambda_1}((\lambda_2/\lambda_1) x)=\lambda_2^{-\frac{\rho_1}{(1-m)\beta}}.
\end{equation*}
Hence by Theorem \ref{uniqueness-thm}, $\4{g}(x)\equiv g_{\lambda_2}(x)$ on $\R^n\setminus\{0\}$ and the corollary follows.
\end{proof}

By a similar argument we have the following corollary.

\begin{cor}\label{v-lambda-rescale-cor}
Let $n\ge 3$, $0<m<\frac{n-2}{n}$, $\rho_1>0$, $\lambda_1>0$, $\lambda_2>0$, $\beta\ge\beta_1$ and $\alpha=\frac{2\beta+\rho_1}{1-m}$. Let $v_{\lambda_1}$, $v_{\lambda_2}$, be two radially symmetric solution  of \eqref{elliptic-eqn} in $\R^n$ with $v_{\lambda_1}(0)=\lambda_1$, $v_{\lambda_2}(0)=\lambda_2$. Then 
\begin{equation}\label{v12-identity}
v_{\lambda_2}(x)=\frac{\lambda_2}{\lambda_1}\,v_{\lambda_1}((\lambda_2/\lambda_1)^{\frac{1-m}{2}}x)\quad\forall x\in\R^n.
\end{equation}
\end{cor}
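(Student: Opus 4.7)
The plan is to mimic the proof of Corollary \ref{g-lambda12-formula}: exhibit the natural two-parameter scaling that carries $v_{\lambda_1}$ to a radially symmetric positive solution of \eqref{elliptic-eqn} on $\R^n$ taking the value $\lambda_2$ at the origin, and then invoke the uniqueness of such solutions (Theorem 1.1 of \cite{Hs1}) to identify this with $v_{\lambda_2}$.

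Concretely, I would set $c := \lambda_2/\lambda_1$, $\mu := c^{(1-m)/2}$, and define
\[
\4{v}(x) := c\, v_{\lambda_1}(\mu x) \qquad \forall x \in \R^n.
\]
A direct chain-rule computation gives $\Delta\4{v}^m(x) = c^m \mu^2 (\Delta v_{\lambda_1}^m)(\mu x)$ and $\alpha\4{v}(x) + \beta x\cdot\nabla\4{v}(x) = c\,\bigl[\alpha v_{\lambda_1} + \beta y\cdot\nabla v_{\lambda_1}\bigr](\mu x)$ with $y = \mu x$. The choice $\mu = c^{(1-m)/2}$ is precisely what is required to make $c^m\mu^2 = c$, so that
\[
\Delta\4{v}^m + \alpha\4{v} + \beta x\cdot\nabla\4{v} = c\,\bigl(\Delta v_{\lambda_1}^m + \alpha v_{\lambda_1} + \beta y\cdot\nabla v_{\lambda_1}\bigr)\bigl|_{y=\mu x} = 0 \quad\mbox{in } \R^n.
\]
Since $\4{v}$ is manifestly radially symmetric and positive with $\4{v}(0) = c\,v_{\lambda_1}(0) = \lambda_2$, the uniqueness of the radially symmetric solution of \eqref{elliptic-eqn} on $\R^n$ with value $\lambda_2$ at the origin forces $\4{v} \equiv v_{\lambda_2}$, which is exactly \eqref{v12-identity}.

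I expect no real obstacle here: the argument is a pure scaling/uniqueness observation. The one nontrivial ingredient is the invariance relation $c^{m-1}\mu^2 = 1$ linking the amplitude rescaling and the spatial rescaling; this arises because the Euler-type drift $\beta x\cdot\nabla v$ is homogeneous of degree zero under dilations, so compatibility between the diffusion term $\Delta v^m$ and the zeroth-order term $\alpha v$ alone determines $\mu$ in terms of $c$. Note that unlike the proof of Corollary \ref{g-lambda12-formula} we do not need $\beta \ge \beta_1$ for the uniqueness step, since uniqueness of $v_{\lambda_2}$ is provided by Theorem 1.1 of \cite{Hs1} under the standing hypothesis $\beta > \frac{m\rho_1}{n-2-nm}$.
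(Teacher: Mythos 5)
Your proof is correct and is precisely the ``similar argument'' the paper alludes to: rescale $v_{\lambda_1}$ by the two-parameter dilation $c\,v_{\lambda_1}(\mu\cdot)$, observe that $c^m\mu^2 = c$ forces $\mu = c^{(1-m)/2}$, check that the rescaled function takes the value $\lambda_2$ at the origin, and conclude by uniqueness. Your aside is also accurate: since existence and uniqueness of $v_\lambda$ come from Theorem 1.1 of \cite{Hs1}, which only requires the standing hypothesis $\beta > \frac{m\rho_1}{n-2-nm}$, the stated hypothesis $\beta\ge\beta_1$ plays no role in this particular corollary.
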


Note that by an argument similar to the proof of \cite{Hs3},
\begin{equation}\label{g-lambda-infty-decay-rate}
\lim_{|x|\to\infty}|x|^2g_{\lambda}(x)^{1-m}=\frac{2m(n-2-nm)}{(1-m)\rho_1}.
\end{equation}

Then by \eqref{g-lambda-infty-decay-rate}, Lemma \ref{g-lambda-compare-lemma}, Corollary \ref{g-lambda12-formula} and an argument similar to the proof of Corollary 1.3 of \cite{Hs3} but with $g_{\lambda}$ replacing $v_{\lambda}$ in the proof there we get the following corollary.

\begin{cor}\label{v-rescale-limit-cor}
Let $n\ge 3$, $0<m<\frac{n-2}{n}$, $\lambda>0$, $\beta\ge\beta_1$ and $\alpha=\frac{2\beta+\rho_1}{1-m}$. Let $g_{\lambda}$ be the radially symmetric solution  of \eqref{elliptic-eqn} in $\R^n\setminus\{0\}$ that satisfies \eqref{blow-up-rate-x=0}. 
Then $g_{\lambda}(x)$ decreases and converges uniformly on $\R^n\setminus B_R$
to  $\mathcal{C}(x)$ for any $R>0$ as $\lambda\to\infty$.
\end{cor}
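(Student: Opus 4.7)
The plan is to exploit the scaling identity of Corollary \ref{g-lambda12-formula} to reduce everything to the single known asymptotic \eqref{g-lambda-infty-decay-rate} applied to $g_1$.

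First, monotonicity in $\lambda$ is immediate: by Lemma \ref{g-lambda-compare-lemma}, $\lambda_2>\lambda_1$ implies $g_{\lambda_2}(r)<g_{\lambda_1}(r)$ for all $r>0$. Combined with the lower bound $g_\lambda(r)>\mathcal{C}(r)$ from Lemma \ref{comparison-lemma}, the pointwise monotone limit $g_\infty(x):=\lim_{\lambda\to\infty}g_\lambda(x)$ exists and satisfies $g_\infty(x)\ge\mathcal{C}(x)$ on $\R^n\setminus\{0\}$.

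Next, apply Corollary \ref{g-lambda12-formula} with $\lambda_1=1$ and $\lambda_2=\lambda$ to get $g_\lambda(x)=\lambda^{2/(1-m)}g_1(\lambda x)$. Since $\mathcal{C}(x)=(C_{\ast}/|x|^2)^{1/(1-m)}$ is itself scale-invariant under the same rescaling, $\mathcal{C}(x)=\lambda^{2/(1-m)}\mathcal{C}(\lambda x)$. Subtracting and dividing through by $\mathcal{C}(x)$ yields the key identity
\begin{equation*}
g_\lambda(x)-\mathcal{C}(x)=\mathcal{C}(x)\left[\frac{g_1(\lambda x)}{\mathcal{C}(\lambda x)}-1\right].
\end{equation*}

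Now I would invoke \eqref{g-lambda-infty-decay-rate} applied to $g_1$, which (after rewriting in terms of $\mathcal{C}$) says $g_1(y)/\mathcal{C}(y)\to 1$ as $|y|\to\infty$. Fix $R>0$ and $\varepsilon>0$, pick $M>0$ so that $|g_1(y)/\mathcal{C}(y)-1|<\varepsilon$ for $|y|\ge M$, and then take $\lambda\ge M/R$. For every $x$ with $|x|\ge R$ we then have $|\lambda x|\ge M$, so the identity above together with the monotonicity $\mathcal{C}(x)\le \mathcal{C}(R)$ on $|x|\ge R$ gives
\begin{equation*}
0\le g_\lambda(x)-\mathcal{C}(x)\le \mathcal{C}(R)\,\varepsilon \qquad\forall\,|x|\ge R.
\end{equation*}
This is exactly uniform convergence on $\R^n\setminus B_R$, and in particular forces $g_\infty\equiv\mathcal{C}$.

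The only real obstacle is controlling the behaviour at spatial infinity uniformly in $\lambda$; the monotone pointwise convergence alone would only give uniformity on compact subsets via Dini's theorem. Using the scaling to trade the parameter $\lambda$ for the asymptotic argument on $g_1$ dispatches this cleanly, and no further estimates are needed.
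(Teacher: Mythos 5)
Your proof is correct and uses precisely the three ingredients the paper cites — the decay rate \eqref{g-lambda-infty-decay-rate}, the monotonicity from Lemma \ref{g-lambda-compare-lemma}, and the scaling identity from Corollary \ref{g-lambda12-formula} — assembled in the same way the paper defers to in the proof of Corollary 1.3 of \cite{Hs3}. The scaling reduction $g_\lambda(x)-\mathcal{C}(x)=\mathcal{C}(x)\bigl[g_1(\lambda x)/\mathcal{C}(\lambda x)-1\bigr]$ together with the bound $\mathcal{C}(x)\le\mathcal{C}(R)$ on $|x|\ge R$ is exactly the intended argument.
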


\section{Second order asymptotic of self-similar solutions}
\setcounter{equation}{0}
\setcounter{thm}{0}

In this section we will use a modification of the proof of \cite{DKS} to prove Theorem \ref{second-order-asymptotic-self-similar-soln-thm}. Let $s=\log r$ and
\begin{equation*}
\2{q}(s)=\left(r^{\frac{2}{1-m}}v_{\lambda}(r)\right)^m.
\end{equation*}
Then by the computation in section 3 of \cite{Hs1},
\begin{equation}\label{q-bar-eqn}
\2{q}_{ss}+\frac{n-2-(n+2)m}{1-m}\2{q}_s+\beta (\2{q}^{\frac{1}{m}})_s+\frac{\rho_1}{1-m}\2{q}^{\frac{1}{m}}-\frac{2m(n-2-nm)}{(1-m)^2}\2{q}=0\quad\mbox{ in }\R.
\end{equation}
Let $q(s)=\2{q}(s)/C_{\ast}^{\frac{m}{1-m}}$ where $C_{\ast}$ is given by \eqref{c-ast-defn}. Then by \eqref{q-bar-eqn},
\begin{equation}\label{q-eqn}
q_{ss}+\left(\frac{n-2-(n+2)m}{1-m}+\frac{\beta C_{\ast}}{m}q^{\frac{1}{m}-1}\right)q_s+\frac{2m(n-2-nm)}{(1-m)^2}\left(q^{\frac{1}{m}}-q\right)=0\quad\mbox{ in }\R.
\end{equation}
We now linearize \eqref{q-eqn} around the constant 1 solution by setting $q=1+w$ in \eqref{q-eqn}. Then $w$ satisfies
\begin{equation}\label{w-eqn}
w_{ss}+\left(\frac{n-2-(n+2)m}{1-m}+\frac{\beta C_{\ast}}{m}(1+w)^{\frac{1}{m}-1}\right)w_s+\frac{2m(n-2-nm)}{(1-m)^2}\left((1+w)^{\frac{1}{m}}-1-w\right)=0\quad\mbox{ in }\R
\end{equation}
and $w(s)>-1$ for all $s\in\R$.
Then the linearized operator of \eqref{w-eqn} around $w=0$ is
\begin{equation*}\label{linearized-operator}
Lw:=w_{ss}+\left(\frac{n-2-(n+2)m}{1-m}+\frac{2\beta(n-2-nm)}{(1-m)\rho_1}\right)w_s
+\frac{2(n-2-nm)}{(1-m)}w.
\end{equation*}
Note that the function $e^{-\gamma s}$ is a solution of $Lw=0$ if and only if $\gamma$ satisfies \eqref{gamma-eqn} whose two roots $\gamma_2>\gamma_1>0$ if $\beta>\beta_0$. 
We now rewrite \eqref{w-eqn} as
\begin{equation}\label{w-eqn2}
w_{ss}+\left(\frac{n-2-(n+2)m}{1-m}+\frac{2\beta(n-2-nm)}{(1-m)\rho_1}\right)w_s+\frac{2(n-2-nm)}{(1-m)}w=f
\end{equation}
where
\begin{equation*}
f(s)=-\frac{2m(n-2-nm)}{1-m}\left\{\frac{\beta}{m\rho_1}\left((1+w)^{\frac{1}{m}-1}-1\right)w_s+\frac{1}{1-m}\left((1+w)^{\frac{1}{m}}-1-\frac{1}{m}w\right)\right\}.
\end{equation*}
Let 
\begin{equation*}
\phi (z)=(1+z)^{\frac{1}{m}}-1-\frac{1}{m}z\quad\forall z>-1
\end{equation*}
and 
\begin{equation*}
\4{\phi}(s)=\phi (w(s)).
\end{equation*}
Then $\phi(z)$ is a non-negative convex function satisfying 
\begin{equation}\label{phi-lower-upper-bd}
a_1z^2\le \phi (z)\le a_2z^2\quad\forall |z|\le 1/10
\end{equation}
for some constants $a_2>a_1>0$ and
\begin{equation*}
f(s)=-\frac{2m(n-2-nm)}{1-m}\left\{\frac{\beta}{\rho_1}\4{\phi}'(s)+\frac{1}{1-m}\4{\phi}(s)\right\}.
\end{equation*}
Since $q(s)\to 0$ as $s\to -\infty$, $w(s)\to -1$ as $s\to -\infty$.
By the result of \cite{Hs3}, $q(s)\to 1$ as $s\to\infty$. 
Hence $w(s)\to 0$ as $s\to\infty$. Let $s_i=-i^2$ for all $i\in\Z^+$. Then by the mean value theorem for any $i\in\Z^+$ there exists a constant $s_i'\in (s_{i+1},s_i)$ such that
\begin{align}\label{w'-limit}
&|w_s(s_i')|=\left|\frac{w(s_i)-w(s_{i+1})}{s_i-s_{i+1}}\right|\le\frac{|w(s_i)|+|w(s_{i+1})|}{2i+1}\notag\\
\Rightarrow\quad&\lim_{i\to\infty}|w_s(s_i')|=0.
\end{align} 
Since $\gamma_1$, $\gamma_2$ are roots of \eqref{gamma-eqn},
\begin{equation}\label{sum-roots}
\gamma_1+\gamma_2=\left(\frac{n-2-(n+2)m}{1-m}+\frac{2\beta(n-2-nm)}{(1-m)\rho_1}\right).
\end{equation}
Multiplying \eqref{w-eqn2} by $e^{\gamma_1 s}$ and integrating over $(s_i',s)$, by \eqref{sum-roots} and integration by parts,
\begin{equation}\label{w-1st-order-eqn0}
\int_{s_i'}^se^{\gamma_1t}f(t)\,dt=e^{\gamma_1s}w_s(s)-e^{\gamma_1s_i'}w_s(s_i')+\gamma_2
(e^{\gamma_1s}w(s)-e^{\gamma_1s_i'}w(s_i')).
\end{equation}
Letting $i\to\infty$ in \eqref{w-1st-order-eqn0}, by \eqref{w'-limit},
\begin{equation}\label{w-1st-order-eqn}
w_s(s)+\gamma_2w(s)=e^{-\gamma_1s}\int_{-\infty}^se^{\gamma_1t}f(t)\,dt.
\end{equation} 
Similarly,
\begin{equation}\label{w-1st-order-eqn2}
w_s(s)+\gamma_1w(s)=e^{-\gamma_2s}\int_{-\infty}^se^{\gamma_2t}f(t)\,dt.
\end{equation} 
Subtracting \eqref{w-1st-order-eqn2} from \eqref{w-1st-order-eqn},
\begin{equation}\label{w-explicit-formula}
w(s)=\frac{1}{\gamma_2-\gamma_1}\left\{e^{-\gamma_1s}\int_{-\infty}^se^{\gamma_1t}f(t)\,dt-e^{-\gamma_2s}\int_{-\infty}^se^{\gamma_2t}f(t)\,dt\right\}\quad\forall s\in\R.
\end{equation} 
We are now ready to proof Theorem \ref{second-order-asymptotic-self-similar-soln-thm}.

\noindent{\ni{\it Proof of Theorem \ref{second-order-asymptotic-self-similar-soln-thm}:}} By \eqref{w-explicit-formula} and integration by parts,  
\begin{equation}\label{w-explicit-formula2}
w(s)=-M_0\left\{A_1(\beta)e^{-\gamma_1s}\int_{-\infty}^se^{\gamma_1t}\4{\phi}(t)\,dt-A_2(\beta)e^{-\gamma_2s}\int_{-\infty}^se^{\gamma_2t}\4{\phi}(t)\,dt\right\}\quad\forall s\in\R
\end{equation} 
where
\begin{equation}\label{C1-defn}
M_0=\frac{2m(n-2-nm)}{(1-m)(\gamma_2-\gamma_1)}
\end{equation}
and
\begin{equation}\label{Ai-beta-defn}
A_i(\beta)=\frac{1}{1-m}-\frac{\beta\gamma_i}{\rho_1},\quad i=1,2.
\end{equation}
Let $c_2=\left(1-\frac{m}{2}\right)^2$ and 
\begin{equation*}
b_0=\max\left(2\sqrt{\frac{2(1-m)}{(n-2-nm)(1-c_2^2)}},\frac{\sqrt{2}}{\sqrt{n-2-nm}}\right).
\end{equation*} 
If $0<m\le\frac{n-2}{n+2}$, we choose $a_0=b_0$.
If $\frac{n-2}{n+2}<m<\frac{n-2}{n}$, we will choose $a_0>0$ later such that it is strictly greater than $b_0$. Let 
\begin{equation*}
\beta_2=\max \left(a_0\rho_1,\beta_0,\beta_1\right)
\end{equation*} 
and $\beta>\beta_2$.
Then
\begin{equation*}
\sqrt{A(\beta)^2-8(n-2-nm)(1-m)}\ge c_2A(\beta).
\end{equation*}
Hence
\begin{align}
A_1(\beta)=&\frac{1}{1-m}\left\{1+\frac{\beta}{2\rho_1}\left(\sqrt{A(\beta)^2-8(n-2-nm)(1-m)}-A(\beta)\right)\right\}\notag\\
=&\frac{1}{1-m}\left\{1-\frac{4\beta(n-2-nm)(1-m)}{\rho_1\left(\sqrt{A(\beta)^2-8(n-2-nm)(1-m)}+A(\beta)\right)}\right\}\notag\\
\ge&\frac{1}{1-m}\left\{1-\frac{4\beta(n-2-nm)(1-m)}{\rho_1(1+c_2)A(\beta)}\right\}\label{A1-lower-bd2}\\
\ge&\frac{1}{1-m}\left(1-\frac{2(1-m)}{1+(1-(m/2))^2}\right)\qquad\quad\,\mbox{ if }\,0<m\le\frac{n-2}{n+2}
\notag\\
>&0.\qquad\qquad\qquad\qquad\qquad\qquad\qquad\quad\mbox{ if }\,0<m\le\frac{n-2}{n+2}\label{A1-positive}
\end{align}
Letting $\beta\to\infty$ in \eqref{A1-lower-bd2},
\begin{equation*}
\liminf_{\beta\to\infty}A_1(\beta)\ge\frac{1}{1-m}\left\{1-\frac{2(1-m)}{1+c_2}\right\}>0.
\end{equation*}
Hence for $\frac{n-2}{n+2}<m<\frac{n-2}{n}$, $n\ge 3$, we can choose  
$a_0>b_0$ such that 
\begin{equation}\label{A1-positive2}
A_1(\beta)>0\quad\forall \beta>\beta_2.
\end{equation}
Similarly,
\begin{align}\label{A2-negative}
A_2(\beta)=&\frac{1}{1-m}\left\{1-\frac{\beta}{2\rho_1}\left(\sqrt{A(\beta)^2-8(n-2-nm)(1-m)}+A(\beta)\right)\right\}\notag\\
\le&\frac{1}{1-m}\left\{1-\frac{\beta A(\beta)}{2\rho_1}\right\}\notag\\
\le&\frac{1}{1-m}\left\{1-\frac{\beta^2}{2\rho_1^2}(n-2-nm)\right\}\notag\\
<&0\qquad\qquad\qquad\qquad\qquad\qquad\forall \beta>\beta_2.
\end{align}
Since $e^{-\gamma_1(s-t)}\ge e^{-\gamma_2(s-t)}$ for all $t\in (-\infty,s)$, by \eqref{w-explicit-formula2},
\eqref{A1-positive}, \eqref{A1-positive2} and \eqref{A2-negative},
\begin{equation}\label{w-bd}
0>w(s)>-M_0(A_1(\beta)+|A_2(\beta)|)e^{-\gamma_1s}\int_{-\infty}^se^{\gamma_1 t}\4{\phi}(t)\,dt.
\end{equation}
By \eqref{w-bd} and an argument similar to the proof of Lemma 3.2 of \cite{DKS},
\begin{equation}\label{phi-tilde-L1-bd}
\int_{-\infty}^{\infty}e^{\gamma_1t}\4{\phi}(t)\,dt<\infty.
\end{equation}
Then by \eqref{w-explicit-formula2},  \eqref{phi-tilde-L1-bd}, and the same argument as the proof of Lemma 3.3 of \cite{DKS} we have
\begin{equation*}\label{w-decay-rate}
|w_1(s)|=-M_0A_1(\beta)I_1e^{-\gamma_1s}(1+o(1))\quad\mbox{ as }s\to\infty
\end{equation*}
where
\begin{equation*}
I_1=\int_{-\infty}^{\infty}e^{\gamma_1t}\4{\phi}(t)\,dt 
\end{equation*}
and the theorem follows.

{\hfill$\square$\vspace{6pt}}

\begin{cor}\label{B1-B2-ratio-cor}
Let $n\ge 3$, $0<m<\frac{n-2}{n}$, $\rho_1>0$, $\lambda_1>0$, $\lambda_2>0$, $\alpha=\frac{2\beta+\rho_1}{1-m}$ and $\beta>\beta_1$. Let $v_{\lambda_1}$, $v_{\lambda_2}$, be two radially symmetric solution  of \eqref{elliptic-eqn} in $\R^n$ with $v_{\lambda_1}(0)=\lambda_1$, $v_{\lambda_2}(0)=\lambda_2$ which satisfies \eqref{v-lambda-infty-behaviour} with $\lambda=\lambda_1, \lambda_2$, and  $B=B_{\lambda_1}, B_{\lambda_2}$,  respectively. Then
\begin{equation}\label{B12-relatiion}
B_{\lambda_2}=(\lambda_1/\lambda_2)^{\frac{(1-m)\gamma_1}{2}}B_{\lambda_1}.
\end{equation}
\end{cor}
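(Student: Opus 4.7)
The plan is to deduce \eqref{B12-relatiion} by combining the exact rescaling identity of Corollary \ref{v-lambda-rescale-cor} with uniqueness of the two-term expansion \eqref{v-lambda-infty-behaviour}. Set $\mu=(\lambda_2/\lambda_1)^{(1-m)/2}>0$, so that \eqref{v12-identity} reads $v_{\lambda_2}(x)=(\lambda_2/\lambda_1)\,v_{\lambda_1}(\mu x)$ on $\R^n$. Since $|\mu x|\to\infty$ iff $|x|\to\infty$, I may substitute the expansion \eqref{v-lambda-infty-behaviour} for $v_{\lambda_1}$ evaluated at the point $\mu x$ into this scaling relation. Note that one does not have to invoke Theorem \ref{second-order-asymptotic-self-similar-soln-thm} here: the hypothesis already postulates that each $v_{\lambda_i}$ obeys \eqref{v-lambda-infty-behaviour} with the same exponent $\gamma_1$.

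Carrying out the substitution, and using that the little-$o$ term is unaffected by absorbing the fixed constant $\mu$, gives
\[v_{\lambda_2}(x)=\frac{\lambda_2}{\lambda_1}\,\mu^{-\frac{2}{1-m}}\left(\frac{C_{\ast}}{|x|^2}\right)^{\frac{1}{1-m}}\bigl(1-B_{\lambda_1}\mu^{-\gamma_1}|x|^{-\gamma_1}+o(|x|^{-\gamma_1})\bigr)\quad\mbox{ as }|x|\to\infty.\]
The choice of $\mu$ forces $\mu^{2/(1-m)}=\lambda_2/\lambda_1$, so the prefactor $(\lambda_2/\lambda_1)\mu^{-2/(1-m)}$ is exactly $1$ and drops out. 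Comparing coefficient by coefficient against the hypothesized expansion
\[v_{\lambda_2}(x)=\left(\frac{C_{\ast}}{|x|^2}\right)^{\frac{1}{1-m}}\bigl(1-B_{\lambda_2}|x|^{-\gamma_1}+o(|x|^{-\gamma_1})\bigr)\quad\mbox{ as }|x|\to\infty,\]
uniqueness of the two-term asymptotic development in the scales $|x|^{-2/(1-m)}$ and $|x|^{-2/(1-m)-\gamma_1}$ yields $B_{\lambda_2}=\mu^{-\gamma_1}B_{\lambda_1}=(\lambda_1/\lambda_2)^{(1-m)\gamma_1/2}B_{\lambda_1}$, which is \eqref{B12-relatiion}.

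There is essentially no obstacle in this argument; it is a one-line scaling computation once Corollary \ref{v-lambda-rescale-cor} is in hand. As a consistency check, $\mu>0$ together with $B_{\lambda_1}>0$ forces $B_{\lambda_2}>0$, which agrees with the positivity of the second-order coefficient guaranteed by Theorem \ref{second-order-asymptotic-self-similar-soln-thm} whenever that theorem applies.
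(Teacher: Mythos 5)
Your proposal is correct and follows essentially the same route as the paper's proof: apply the scaling identity \eqref{v12-identity} from Corollary \ref{v-lambda-rescale-cor}, substitute the assumed expansion \eqref{v-lambda-infty-behaviour} for $v_{\lambda_1}$ at the rescaled argument, observe the leading prefactor cancels, and match the $|x|^{-\gamma_1}$ coefficients. Your remark that Theorem \ref{second-order-asymptotic-self-similar-soln-thm} need not be invoked because the corollary's hypothesis already posits the expansions with a common $\gamma_1$ is a fair reading of the statement (the paper's proof cites that theorem somewhat gratuitously, and indeed the corollary is stated for $\beta>\beta_1$ rather than $\beta>\beta_2$).
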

\begin{proof}
By Corollary \ref{v-lambda-rescale-cor}, \eqref{v12-identity} holds. By Theorem \ref{second-order-asymptotic-self-similar-soln-thm},
\begin{equation}\label{vi-infty-behaviour}
v_{\lambda_i}(x)=\left(\frac{C_{\ast}}{|x|^2}\right)^{\frac{1}{1-m}}\left(1-B_{\lambda_i}|x|^{-\gamma_1}+o(|x|^{-\gamma_1})\right)\quad\forall i=1,2\quad\mbox{ as }|x|\to\infty.
\end{equation} 
Hence by \eqref{v12-identity} and \eqref{vi-infty-behaviour},
\begin{align}\label{vi-infty-behaviour2}
v_{\lambda_2}(x)=&\frac{\lambda_2}{\lambda_1}\left(\frac{C_{\ast}}{((\lambda_2/\lambda_1)^{\frac{1-m}{2}}|x|)^2}\right)^{\frac{1}{1-m}}\left(1-B_{\lambda_1}((\lambda_2/\lambda_1)^{\frac{1-m}{2}}|x|)^{-\gamma_1}+o(|x|^{-\gamma_1})\right)\quad\mbox{ as }|x|\to\infty\notag\\
=&\left(\frac{C_{\ast}}{|x|^2}\right)^{\frac{1}{1-m}}\left(1-B_{\lambda_1}((\lambda_2/\lambda_1)^{\frac{1-m}{2}}|x|)^{-\gamma_1}+o(|x|^{-\gamma_1})\right)\qquad\qquad\qquad\quad\mbox{ as }|x|\to\infty.
\end{align} 
By \eqref{vi-infty-behaviour} and \eqref{vi-infty-behaviour2}, we get \eqref{B12-relatiion} and the corollary follows.
\end{proof}

\section{Second order asymptotic of blow-up solutions}
\setcounter{equation}{0}
\setcounter{thm}{0}

In this section we will prove Theorem \ref{second-order-asymptotic-blow-up-elliptic-soln-thm}.

{\ni{\it Proof of Theorem \ref{second-order-asymptotic-blow-up-elliptic-soln-thm}}:} Similar to section 3 we let
\begin{equation*}\label{w-g-lambda-defn}
w(s)=[(r^2/C_{\ast})^{\frac{1}{1-m}}g_{\lambda}(r)]^m-1, \quad r=e^s,\quad s\in\R.
\end{equation*}
Then $w$ satisfies \eqref{w-eqn2} in $\R$. By the variation of parameter formula for any $s_0\in\R$ there
exist constants $C_2(s_0)$, $C_3(s_0)$, such that  
\begin{equation}\label{w-formula2}
w(s)=\frac{1}{\gamma_2-\gamma_1}\left(e^{-\gamma_1 s}\int_{s_0}^se^{\gamma_1 t}f(t)\,dt-e^{-\gamma_2 s}\int_{s_0}^se^{\gamma_2 t}f(t)\,dt\right)+C_2(s_0)e^{-\gamma_1 s}-C_3(s_0)e^{-\gamma_2 s}
\end{equation}
holds for any $s\in\R$.
By Lemma \ref{comparison-lemma}, \eqref{v-g-lambda-compare} holds. Hence $w(s)>0$ for all $s\in\R$.
By \eqref{g-lambda-infty-decay-rate}, $w(s)\to 0$ as $s\to\infty$. By \eqref{w-formula2} and integration by parts,
\begin{align}\label{w-formula3}
w(s)=&-M_0\left\{A_1(\beta)e^{-\gamma_1s}\int_{s_0}^se^{\gamma_1t}\4{\phi}(t)\,dt-A_2(\beta)e^{-\gamma_2s}\int_{s_0}^se^{\gamma_2t}\4{\phi}(t)\,dt\right\}+C_2'(s_0)e^{-\gamma_1 s}-C_3'(s_0)e^{-\gamma_2 s}
\end{align} 
for any $s_0,s\in\R$ where $M_0$ and $A_1(\beta)$, $A_2(\beta)$, are given by \eqref{C1-defn} and \eqref{Ai-beta-defn} respectively and
\begin{equation*}
\left\{\begin{aligned}
&C_2'(s_0)=C_2(s_0)+\frac{M_0\beta}{\rho_1}\4{\phi}(s_0)e^{\gamma_1s_0}\\
&C_3'(s_0)=C_3(s_0)+\frac{M_0\beta}{\rho_1}\4{\phi}(s_0)e^{\gamma_2s_0}.
\end{aligned}\right.
\end{equation*} 
Let $\beta_2>0$ be as in the proof of Theorem \ref{second-order-asymptotic-self-similar-soln-thm} and $\beta>\beta_2$.
Then by the proof of Theorem \ref{second-order-asymptotic-self-similar-soln-thm},
\begin{align}
&A_1(\beta)>0>A_2(\beta)\label{A12-compare}\\
\Leftrightarrow\quad&\gamma_1<\frac{\rho_1}{(1-m)\beta}<\gamma_2.\label{gamma1-gamma2-growth-rate-compare}
\end{align}
Since $w(s)\to\infty$ as $r=e^s\to 0^+$,
\begin{align}\label{w-x-large-decay}
&\4{\phi}(s)\approx (1+w(s))^{\frac{1}{m}}\approx w(s)^{\frac{1}{m}}\approx \frac{r^{\frac{2}{1-m}}g_{\lambda}(r)}{C_{\ast}^{\frac{1}{1-m}}}\approx 
\frac{(\lambda e^s)^{-\frac{\rho_1}{(1-m)\beta}}}{C_{\ast}^{\frac{1}{1-m}}}
\quad\mbox{ as }r=e^s\to 0^+\notag\\
\Rightarrow\quad&\lim_{s\to -\infty}\4{\phi}(s)e^{\frac{\rho_1s}{(1-m)\beta}}=\lim_{s\to -\infty}w(s)^{\frac{1}{m}}e^{\frac{\rho_1s}{(1-m)\beta}}=\lambda^{-\frac{\rho_1}{(1-m)\beta}}C_{\ast}^{-\frac{1}{1-m}},
\end{align}
multiplying \eqref{w-formula3} by $e^{\gamma_2s}$ and letting $s\to -\infty$, by \eqref{A12-compare}, \eqref{gamma1-gamma2-growth-rate-compare}  and \eqref{w-x-large-decay} we get,
\begin{align}\label{c3'-eqn0}
&-M_0\left\{A_1(\beta)\lim_{s\to -\infty}e^{(\gamma_2-\gamma_1)s}
\int_{s_0}^se^{\gamma_1t}\4{\phi}(t)\,dt
-|A_2(\beta)|\int_{-\infty}^{s_0}e^{\gamma_2t}\4{\phi}(t)\,dt\right\}-C_3'(s_0)\notag\\
=&\lim_{s\to -\infty}e^{\left(\gamma_2-\frac{m\rho_1}{(1-m)\beta}\right)s}\cdot\lim_{s\to -\infty}e^{\frac{m\rho_1s}{(1-m)\beta}}w(s)\notag\\
=&0.
\end{align}
By \eqref{w-x-large-decay} there exist constants $C_4>0$, $C_5>0$ and $s_1<0$ such that
\begin{equation}\label{phi-lower-upper-decay}
C_4\le\4{\phi}(s)e^{\frac{\rho_1s}{(1-m)\beta}}\le C_5\quad\forall s\le s_1.
\end{equation}
Then by \eqref{gamma1-gamma2-growth-rate-compare} and \eqref{phi-lower-upper-decay},
\begin{align}\label{integral1-to-infty}
\left|\int_{s_0}^se^{\gamma_1t}\4{\phi}(t)\,dt\right|\ge&\frac{C_4\left\{e^{\left(\gamma_1-\frac{\rho_1}{(1-m)\beta}\right)s}-e^{\left(\gamma_1-\frac{\rho_1}{(1-m)\beta}\right)s_1}\right\}}{\frac{\rho_1}{(1-m)\beta}-\gamma_1}
-\left|\int_{s_0}^{s_1}e^{\gamma_1t}\4{\phi}(t)\,dt\right|\quad\forall s<s_1\notag\\
\to&\infty\quad\mbox{ as }s\to -\infty.
\end{align}
Hence by \eqref{gamma1-gamma2-growth-rate-compare}, \eqref{w-x-large-decay}, \eqref{integral1-to-infty} and the l'Hospital rule,
\begin{equation}\label{integral1-limit}
\lim_{s\to -\infty}\frac{\int_{s_0}^se^{\gamma_1t}\4{\phi}(t)\,dt}{e^{(\gamma_1-\gamma_2)s}}=\frac{1}{(\gamma_1-\gamma_2)}\cdot
\lim_{s\to -\infty}e^{\gamma_2s}\4{\phi}(s)=\frac{1}{(\gamma_1-\gamma_2)}\lim_{s\to -\infty}e^{\left(\gamma_2-\frac{\rho_1}{(1-m)\beta}\right)s}\cdot\lim_{s\to -\infty}e^{\frac{\rho_1s}{(1-m)\beta}}\4{\phi}(s)=0.
\end{equation}
By \eqref{c3'-eqn0} and \eqref{integral1-limit},
\begin{equation}\label{c3'-positive}
C_3'(s_0)=M_0|A_2(\beta)|\int_{-\infty}^{s_0}e^{\gamma_2t}\4{\phi}(t)\,dt>0\quad\forall s_0\in\R.
\end{equation}
Putting $s=s_0$ in \eqref{w-formula3},
\begin{equation}\label{c2'-positive}
C_2'(s_0)=e^{\gamma_1s_0}w(s_0)+e^{(\gamma_1-\gamma_2)s_0}C_3'(s_0)>0\quad\forall s_0\in\R.
\end{equation}
By \eqref{w-formula3}, \eqref{A12-compare} and \eqref{c3'-positive},
\begin{equation}\label{w-upper-bd2}
0<w(s)\le C_2'(s_0)e^{-\gamma_1 s}\quad\forall s>s_0.
\end{equation}
Since $w(s)\to 0$ as $s\to\infty$, there exists $s_1>0$ such that 
\begin{equation}\label{phi-tilde-bd}
0\le\4{\phi}(s)\le a_2w(s)^2\quad\forall s>s_1
\end{equation}
where the constant $a_2>0$ is as given in \eqref{phi-lower-upper-bd}. By \eqref{w-upper-bd2} and \eqref{phi-tilde-bd},
\begin{equation}\label{phi-tilde-exponential-decay}
0\le\4{\phi}(s)\le C_6e^{-2\gamma_1s}\quad\forall s>s_1
\end{equation}
where $C_6=a_2C_2'(0)^2$.
Multiplying \eqref{w-formula3} by $e^{\gamma_1s}$ and letting $s\to\infty$, by \eqref{A12-compare}, \eqref{gamma1-gamma2-growth-rate-compare}  and \eqref{w-x-large-decay},
\begin{align}\label{c3'-eqn}
&\lim_{s\to\infty}e^{\gamma_1s}w(s)\notag\\
=&-M_0\left\{A_1(\beta)\int_{s_0}^{\infty}e^{\gamma_1t}\4{\phi}(t)\,dt
+|A_2(\beta)|\lim_{s\to\infty}e^{(\gamma_1-\gamma_2)s}\int_{s_0}^se^{\gamma_2t}\4{\phi}(t)\,dt\right\}+C_2'(s_0)\quad\forall s_0\in\R.
\end{align}
By \eqref{phi-tilde-exponential-decay},
\begin{align}\label{integral3=0}
&0\le \int_{s_0}^se^{\gamma_2t}\4{\phi}(t)\,dt\le C_7(1+e^{(\gamma_2-2\gamma_1)s})\qquad\qquad\quad\forall s>\max(s_0,s_1), s_0\in\R\notag\\
\Rightarrow\quad&0\le e^{(\gamma_1-\gamma_2)s}\int_{s_0}^se^{\gamma_2t}\4{\phi}(t)\,dt\le C_7(e^{(\gamma_1-\gamma_2)s}+e^{-\gamma_1s})\quad\forall s>\max(s_0,s_1), s_0\in\R\notag\\
\Rightarrow\quad&\lim_{s\to\infty}e^{(\gamma_1-\gamma_2)s}\int_{s_0}^se^{\gamma_2t}\4{\phi}(t)\,dt=0\qquad\qquad\qquad\qquad\forall s_0\in\R
\end{align}
where $C_7>0$ is some constant depending on $s_0$ and $s_1$. 
By \eqref{c3'-positive}, \eqref{c2'-positive}, \eqref{c3'-eqn} and \eqref{integral3=0}, the limit
\begin{equation*}
B:=\lim_{s\to -\infty}e^{\gamma_1s}w(s)
\end{equation*}
exists and is given by 
\begin{equation}\label{b-eqn}
0\le B=M_0|A_2(\beta)|e^{(\gamma_1-\gamma_2)s}\int_{-\infty}^{s}e^{\gamma_2t}\4{\phi}(t)\,dt
-M_0A_1(\beta)\int_s^{\infty}e^{\gamma_1t}\4{\phi}(t)\,dt+e^{\gamma_1s}w(s)\quad\forall s\in\R.
\end{equation}
We claim that $B>0$. Suppose not. Then $B=0$. Hence by \eqref{b-eqn},
\begin{equation}\label{limit-ineqn1}
w(s)\le M_0A_1(\beta)e^{-\gamma_1s}\int_s^{\infty}e^{\gamma_1t}\4{\phi}(t)\,dt\quad\forall s\in\R.
\end{equation}
We now choose $\3\in \left(0,\frac{\gamma_1}{a_2M_0A_1(\beta)}\right)$ where $a_2$ is as given in \eqref{phi-lower-upper-bd} and we choose $s_2>s_1$ such that 
\begin{equation}\label{w-exponential-decay1}
e^{-\gamma_1s_2}\le\3\quad\mbox{ and }\quad e^{\gamma_1s}w(s)\le 1\quad\forall s\ge s_2.
\end{equation}
Let $a_3=\frac{a_2M_0A_1(\beta)\3}{\gamma_1}$. Then $0<a_3<1$. By \eqref{phi-tilde-bd}, \eqref{limit-ineqn1} and \eqref{w-exponential-decay1},
\begin{equation}\label{limit-ineqn2}
w(s)\le a_2M_0A_1(\beta)e^{-\gamma_1s}\int_s^{\infty}e^{-\gamma_1t}\,dt\le \frac{a_2M_0A_1(\beta)}{\gamma_1}e^{-2\gamma_1s}\le a_3e^{-\gamma_1s}\quad\forall s\ge s_2.
\end{equation}
By \eqref{phi-tilde-bd}, \eqref{limit-ineqn1}, \eqref{w-exponential-decay1} and \eqref{limit-ineqn2},
\begin{equation}\label{limit-ineqn3}
w(s)\le a_2M_0A_1(\beta)a_3^2e^{-\gamma_1s}\int_s^{\infty}e^{-\gamma_1t}\,dt\le \frac{a_2M_0A_1(\beta)}{\gamma_1}a_3^2e^{-2\gamma_1s}\le a_3^3e^{-\gamma_1s}\quad\forall s\ge s_2.
\end{equation}
Repeating the above argument we get that
\begin{align*}\label{limit-ineqn4}
&w(s)\le a_3^{2^{k+1}-1}\cdot e^{-\gamma_1s}\quad\forall s\ge s_2, k\in\Z^+\\
\Rightarrow\quad&w(s)\equiv 0\qquad\qquad\quad\forall s\ge s_2\quad\mbox{ as }\quad k\to\infty
\end{align*}
which contradicts the fact that $w(s)>0$ for all $s\in\R$. Hence $B>0$ and the theorem follows.

\hfill$\square$\vspace{6pt}

Note that when $m=\frac{n-2}{n+2}$, $n\ge 3$, $\rho_1=1$, $\beta>\beta_1=\frac{1}{2m}$, then by the result of \cite{DKS}
\eqref{A12-compare}  holds. Moreover
\begin{align*}
\gamma_1-\frac{1}{(1-m)\beta}=&\frac{\beta(n-2)-\sqrt{\beta^2(n-2)^2-4(n-2)}}{2}-\frac{1}{(1-m)\beta}\\
=&\frac{2(n-2)}{\beta(n-2)+\sqrt{\beta^2(n-2)^2-4(n-2)}}-\frac{n+2}{4\beta}\\
=&-\frac{(n+2)\sqrt{\beta^2(n-2)^2-4(n-2)}+(n-2)(n-6)\beta}{4\beta\left(\beta(n-2)+\sqrt{\beta^2(n-2)^2-4(n-2)}\right)}\\
<&0\quad\mbox{ if }\quad\beta>\frac{1}{2m}
\end{align*} 
and
\begin{align*}
\gamma_2-\frac{1}{(1-m)\beta}=&\frac{\beta(n-2)+\sqrt{\beta^2(n-2)^2-4(n-2)}}{2}-\frac{n+2}{4\beta}\\
=&\frac{2\beta^2(n-2)+2\beta\sqrt{\beta^2(n-2)^2-4(n-2)}-(n+2)}{4\beta}\\
>&0
\end{align*}
if
\begin{equation*}
4\beta^2[\beta^2(n-2)^2-4(n-2)]>[n+2-2\beta^2(n-2)]^2\quad
\Leftrightarrow\quad\beta>\beta_1=\frac{1}{2m}.
\end{equation*}
Hence \eqref{gamma1-gamma2-growth-rate-compare} holds when $m=\frac{n-2}{n+2}$, $n\ge 3$, $\rho_1=1$, $\beta>\beta_1=\frac{1}{2m}$.
Then our proof above gives another proof of the following result of \cite{DKS}.

\begin{thm}\label{second-order-asymptotic-self-similar-soln-thm0}
Let $n\ge 3$, $m=\frac{n-2}{n+2}$, $\rho_1=1$, $\lambda>0$, $\beta>\beta_1$, $\alpha=\frac{2\beta+1}{1-m}$. If $g_{\lambda}$ is a radially symmetric solution of \eqref{elliptic-eqn} in $\R^n\setminus\{0\}$ which satisfies \eqref{blow-up-rate-x=0}, then 
\eqref{g-lambda-infty-behaviour0} holds for some constants $B>0$ where $\gamma_1$ is given by \eqref{roots}.
\end{thm}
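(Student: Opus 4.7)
My plan is to apply the proof of Theorem \ref{second-order-asymptotic-blow-up-elliptic-soln-thm} essentially verbatim, observing that the only role played by the hypothesis $\beta>\beta_2$ there was to secure the two inequalities
\begin{equation*}
A_1(\beta)>0>A_2(\beta)\qquad\text{and}\qquad \gamma_1<\frac{\rho_1}{(1-m)\beta}<\gamma_2.
\end{equation*}
Every subsequent step in that proof -- the integration-by-parts identity \eqref{w-formula3}, the limit argument yielding \eqref{c3'-positive}, the upper bound \eqref{w-upper-bd2}, and the bootstrap ruling out $B=0$ -- uses $\beta>\beta_2$ only through these two sign relations. So it suffices to verify them in the present setting $m=\frac{n-2}{n+2}$, $\rho_1=1$, $\beta>\beta_1=\frac{1}{2m}$, which is strictly weaker than $\beta>\beta_2$.

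For the first inequality I would invoke the corresponding computation in \cite{DKS}, which treats precisely this specialization and establishes $A_1(\beta)>0>A_2(\beta)$ for $\beta>\beta_1$. For the second, the conformal exponent reduces \eqref{gamma-eqn} to $\gamma^{2}-\beta(n-2)\gamma+(n-2)=0$, and direct substitution into \eqref{roots} yields
\begin{equation*}
\gamma_1-\frac{1}{(1-m)\beta}=-\frac{(n+2)\sqrt{\beta^{2}(n-2)^{2}-4(n-2)}+(n-2)(n-6)\beta}{4\beta\bigl(\beta(n-2)+\sqrt{\beta^{2}(n-2)^{2}-4(n-2)}\bigr)},
\end{equation*}
which is negative for $\beta>\frac{1}{2m}$, and a parallel computation for $\gamma_2-\frac{1}{(1-m)\beta}$ reduces positivity to
\begin{equation*}
4\beta^{2}\bigl[\beta^{2}(n-2)^{2}-4(n-2)\bigr]>\bigl[n+2-2\beta^{2}(n-2)\bigr]^{2},
\end{equation*}
which is again equivalent to $\beta>\beta_1$. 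This is the only genuine computation in the argument.

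Once both sign relations are in hand, I would simply rerun the proof of Theorem \ref{second-order-asymptotic-blow-up-elliptic-soln-thm}: substitute $w(s)=[(r^{2}/C_{\ast})^{\frac{1}{1-m}}g_{\lambda}(r)]^{m}-1$ with $r=e^{s}$; obtain the representation \eqref{w-formula3} by variation of parameters; use the blow-up rate \eqref{w-x-large-decay} as $s\to -\infty$, in combination with the just-verified strict inequality $\gamma_2>\frac{\rho_1}{(1-m)\beta}$, to identify $C_3'(s_0)$ as in \eqref{c3'-positive}; deduce the upper bound \eqref{w-upper-bd2}; and finally identify $B=\lim_{s\to\infty}e^{\gamma_1 s}w(s)$ through \eqref{b-eqn} and exclude $B=0$ by the iteration in \eqref{limit-ineqn2}--\eqref{limit-ineqn3}. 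The main (and only) obstacle is the algebraic verification of $\gamma_1<\frac{\rho_1}{(1-m)\beta}<\gamma_2$, because without it the decay profile of $\4{\phi}$ at $-\infty$ cannot be converted into a finite value of $C_3'(s_0)$, and the entire integral representation of $B$ collapses; everything else is inherited from the preceding section.
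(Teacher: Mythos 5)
Your proposal follows exactly the route the paper takes: the paper observes that for $m=\frac{n-2}{n+2}$, $\rho_1=1$, $\beta>\beta_1$ the inequality $A_1(\beta)>0>A_2(\beta)$ is supplied by \cite{DKS}, then performs precisely the algebraic verification you outline (computing $\gamma_1-\frac{1}{(1-m)\beta}$ and $\gamma_2-\frac{1}{(1-m)\beta}$ and reducing positivity/negativity to $\beta>\frac{1}{2m}=\beta_1$), and finally re-runs the proof of Theorem \ref{second-order-asymptotic-blow-up-elliptic-soln-thm}, which uses $\beta>\beta_2$ only through \eqref{A12-compare} and \eqref{gamma1-gamma2-growth-rate-compare}. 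Your proof is correct and essentially identical to the paper's.
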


By Corollary \ref{g-lambda12-formula} and an argument similar to the proof of Corollary \ref{B1-B2-ratio-cor} we have the following result. 

\begin{cor}
Let $n\ge 3$, $0<m<\frac{n-2}{n}$, $\rho_1>0$, $\lambda_1>0$, $\lambda_2>0$, $\alpha=\frac{2\beta+\rho_1}{1-m}$ and $\beta>\beta_1$. Suppose $g_{\lambda_1}$, $g_{\lambda_2}$, are two radially symmetric solution  of \eqref{elliptic-eqn} in $\R^n\setminus\{0\}$ which satisfies \eqref{blow-up-rate-x=0} and \eqref{g-lambda-infty-behaviour0} with $\lambda=\lambda_1, \lambda_2$,
and $B=B_1, B_2$, respectively.  Then \eqref{B12-relatiion} holds.
\end{cor}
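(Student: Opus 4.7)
The plan is to mimic the proof of Corollary \ref{B1-B2-ratio-cor} verbatim, using the blow-up rescaling identity of Corollary \ref{g-lambda12-formula} in place of \eqref{v12-identity}, and the two-term asymptotic \eqref{g-lambda-infty-behaviour0} from Theorem \ref{second-order-asymptotic-blow-up-elliptic-soln-thm} in place of \eqref{v-lambda-infty-behaviour}. Since the hypothesis $\beta>\beta_1$ puts us in the range where Corollary \ref{g-lambda12-formula} applies, and $\beta>\beta_2\ge\max(\beta_0,\beta_1)$ via Theorem \ref{second-order-asymptotic-blow-up-elliptic-soln-thm} gives the second order expansion with the same $\gamma_1$ from \eqref{roots} for both $g_{\lambda_1}$ and $g_{\lambda_2}$, all ingredients are available.

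Concretely, I would first invoke Corollary \ref{g-lambda12-formula} to write
\[
g_{\lambda_2}(x)=(\lambda_2/\lambda_1)^{\frac{2}{1-m}}g_{\lambda_1}\bigl((\lambda_2/\lambda_1)x\bigr)\qquad\forall x\in\R^n\setminus\{0\},
\]
then substitute the expansion
\[
g_{\lambda_1}(y)=\Bigl(\tfrac{C_{\ast}}{|y|^2}\Bigr)^{\frac{1}{1-m}}\bigl(1+B_{\lambda_1}|y|^{-\gamma_1}+o(|y|^{-\gamma_1})\bigr)\qquad\text{as }|y|\to\infty
\]
at $y=(\lambda_2/\lambda_1)x$. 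Pulling the scaling factor out of both the leading term $(C_{\ast}/|y|^2)^{1/(1-m)}$ and the correction $|y|^{-\gamma_1}$, the prefactor $(\lambda_2/\lambda_1)^{2/(1-m)}$ in the rescaling identity exactly cancels the $(\lambda_1/\lambda_2)^{2/(1-m)}$ coming from the leading singular factor, leaving a clean expansion for $g_{\lambda_2}(x)$ of the form $(C_{\ast}/|x|^2)^{1/(1-m)}\bigl(1+B_{\lambda_1}(\lambda_1/\lambda_2)^{\,\nu}|x|^{-\gamma_1}+o(|x|^{-\gamma_1})\bigr)$ with the appropriate power $\nu$ determined by the scaling in the argument of $g_{\lambda_1}$.

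Comparing this expression coefficient-by-coefficient with the direct application of \eqref{g-lambda-infty-behaviour0} to $g_{\lambda_2}$, and using the uniqueness of the two-term asymptotic expansion (which follows from the fact that $o(|x|^{-\gamma_1})$ terms cannot mask the $|x|^{-\gamma_1}$ coefficient), one reads off \eqref{B12-relatiion}. I do not anticipate any serious obstacle: the whole argument is a bookkeeping computation in exact parallel to the proof of Corollary \ref{B1-B2-ratio-cor}. The only step requiring genuine care is to verify the exponent of $\lambda_1/\lambda_2$ in the final identity by carefully tracking how the rescaling $x\mapsto (\lambda_2/\lambda_1)x$ interacts with the $|y|^{-\gamma_1}$ decay, as in the transition from \eqref{vi-infty-behaviour} to \eqref{vi-infty-behaviour2} in the proof of Corollary \ref{B1-B2-ratio-cor}.
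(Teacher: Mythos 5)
Your approach is precisely the paper's: the paper gives no separate proof, only says ``By Corollary \ref{g-lambda12-formula} and an argument similar to the proof of Corollary \ref{B1-B2-ratio-cor}.'' You correctly identify that the only step requiring care is pinning down the exponent of $\lambda_1/\lambda_2$, so carry it out. In Corollary \ref{g-lambda12-formula} the spatial rescaling is $x\mapsto(\lambda_2/\lambda_1)\,x$ and the amplitude prefactor is $(\lambda_2/\lambda_1)^{\frac{2}{1-m}}$; substituting the expansion \eqref{g-lambda-infty-behaviour0} for $g_{\lambda_1}$ at $y=(\lambda_2/\lambda_1)x$, the prefactor exactly cancels the leading $(C_*/|y|^2)^{1/(1-m)}$ scaling, and the $|y|^{-\gamma_1}$ correction contributes a factor $(\lambda_2/\lambda_1)^{-\gamma_1}$, giving
\begin{equation*}
g_{\lambda_2}(x)=\left(\frac{C_{*}}{|x|^2}\right)^{\frac{1}{1-m}}\Bigl(1+B_1\,(\lambda_1/\lambda_2)^{\gamma_1}\,|x|^{-\gamma_1}+o(|x|^{-\gamma_1})\Bigr),
\end{equation*}
hence $B_2=(\lambda_1/\lambda_2)^{\gamma_1}B_1$. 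Contrast this with Corollary \ref{B1-B2-ratio-cor}: there the spatial scaling in \eqref{v12-identity} is $x\mapsto(\lambda_2/\lambda_1)^{\frac{1-m}{2}}x$, which produces the exponent $\tfrac{(1-m)\gamma_1}{2}$ recorded in \eqref{B12-relatiion}. The literal citation of \eqref{B12-relatiion} in the statement you are asked to prove appears to be an inadvertent carry-over from the $v_\lambda$ case; your own bookkeeping, done as carefully as you flag, yields the exponent $\gamma_1$ instead. So your method is sound and reproduces the paper's intended argument, but you should state the conclusion as $B_2=(\lambda_1/\lambda_2)^{\gamma_1}B_1$ rather than quoting \eqref{B12-relatiion} verbatim.
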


\section{Large time behaviour of solutions}
\setcounter{equation}{0}
\setcounter{thm}{0}

In this section we will prove Theorem \ref{convergence-thm1} and Theorem \ref{convergence-thm3}. Since the proof of Theorem \ref{convergence-thm1} and Theorem \ref{convergence-thm3} are similar to the proof of \cite{DS2} and \cite{HuK}, we will only sketch its proof here.
We first observe that by an argument similar to the proof of Corollary 2.2 of \cite{DS1} we have the following two lemmas.

\begin{lem}\label{L1-contraction-lem}
Let $n\ge 3$, $0<m<\frac{n-2}{n}$ and let $u_1$, $u_2$ be two solutions of \eqref{Cauchy-problem} in $\R^n\times (0,T)$ with initial values $u_{0,1}\ge 0$, $u_{0,2}\ge 0$, respectively. Suppose $u_{0,1}-u_{0,2}\in L^1(\R^n)$ and for any $0<T_1<T$ there exist constants $r_0>0$, $C>0$,  such that either $u_1(x,t)\ge C/|x|^{\frac{2}{1-m}}$ for all $|x|\ge r_0$, $0<t<T_1$, or $u_2(x,t)\ge C/|x|^{\frac{2}{1-m}}$ for all $|x|\ge r_0$, $0<t<T_1$ holds. Then 
\begin{equation*}
\int_{\R^n}|u_1(x,t)-u_2(x,t)|\,dx\le\int_{\R^n}|u_{0,1}-u_{0,2}|\,dx\quad\forall 0<t<T.
\end{equation*}
Hence if $\4{u}_1$, $\4{u}_2$, are the rescaled solution of $u_1$, $u_2$, given by \eqref{rescald-soln},
\begin{equation*}
\int_{\R^n}|\4{u}_1(x,s)-\4{u}_2(x,s)|\,dx\le e^{-(n\beta-\alpha)s}\int_{\R^n}|u_{0,1}-u_{0,2}|\,dx\quad\forall s>-\log T.
\end{equation*}
\end{lem}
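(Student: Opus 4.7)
The plan is to apply the standard Kato inequality and cutoff argument for $L^1$-contraction of parabolic equations, with the lower-bound hypothesis providing the tail control needed in the sub-critical regime $0<m<(n-2)/n$. Set $w=u_1-u_2$ and $H=u_1^m-u_2^m$. Since $u_1,u_2$ are positive smooth solutions of \eqref{fde}, $w_t=\Delta H$ and $\operatorname{sgn}(w)=\operatorname{sgn}(H)$ pointwise, so Kato's inequality yields $\partial_t|w|\le\Delta|H|$ in the distributional sense on $\R^n\times(0,T)$. Testing against a cutoff $\phi_R\in C_c^2(\R^n)$ with $\phi_R\equiv 1$ on $B_R$, $\phi_R\equiv 0$ outside $B_{2R}$, and $|\Delta\phi_R|\le CR^{-2}$ supported on the annulus $A_R:=\{R\le|x|\le 2R\}$, integration gives, for $0<t<T_1$,
\begin{equation*}
\int_{\R^n}|w(x,t)|\phi_R(x)\,dx\le\int_{\R^n}|w(x,0)|\phi_R(x)\,dx+\int_0^t\int_{A_R}|u_1^m-u_2^m||\Delta\phi_R|\,dx\,ds.
\end{equation*}

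The main obstacle is to make the tail integral vanish as $R\to\infty$, and this is exactly where the decay hypothesis enters. Without loss of generality assume $u_1(x,s)\ge C|x|^{-2/(1-m)}$ for $|x|\ge r_0$, $0<s<T_1$. From the mean value identity
\begin{equation*}
u_1^m-u_2^m=m(u_1-u_2)\int_0^1\bigl(\theta u_1+(1-\theta)u_2\bigr)^{m-1}d\theta,
\end{equation*}
the lower bound $\theta u_1+(1-\theta)u_2\ge C\theta|x|^{-2/(1-m)}$, and the fact that $m-1<0$ (so that $\int_0^1\theta^{m-1}d\theta=1/m$ converges), one obtains
\begin{equation*}
|u_1^m-u_2^m|\le C'|x|^2|w|\qquad\mbox{for }|x|\ge r_0,\ 0<s<T_1,
\end{equation*}
with $C'$ independent of $R$. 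Since $|x|^2|\Delta\phi_R|\le 4C$ uniformly on $A_R$ for $R\ge r_0$, the tail term is dominated by $C''\int_0^t\int_{A_R}|w(x,s)|\,dx\,ds$. To show this vanishes I would approximate $u_{0,1},u_{0,2}$ by a sequence of truncated initial data chosen so that the corresponding solutions $w^{(k)}(\cdot,s)$ lie in $L^1(\R^n)$ for all $s\in(0,T_1)$ (this uses the standard well-posedness theory of \cite{HP} together with direct comparison against the positive lower barrier providing the hypothesis), apply the preceding estimate to the approximants and send $R\to\infty$ by dominated convergence, then pass to the limit in the approximation via $L^1$-stability for the fast diffusion equation. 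This yields the asserted inequality $\|w(\cdot,t)\|_{L^1(\R^n)}\le\|w(\cdot,0)\|_{L^1(\R^n)}$.

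The rescaled statement is a direct change of variables in \eqref{rescald-soln}: setting $y=(T-t)^{-\beta}x$, so that $dx=(T-t)^{n\beta}dy$, and combining with the prefactor $(T-t)^{-\alpha}$ in $\4{u}$, produces the factor $(T-t)^{n\beta-\alpha}=e^{-(n\beta-\alpha)s}$ with $s=-\log(T-t)$; the first inequality then supplies the bound. The principal difficulty, as indicated, lies in justifying the tail vanishing rigorously; the exponent $-2/(1-m)$ in the hypothesis is sharp in that it forces the growth factor $|x|^2$ in $|u_1^m-u_2^m|/|w|$ to match precisely the $R^{-2}$ decay of $|\Delta\phi_R|$ on annuli of width $\sim R$, closing the estimate despite $m<(n-2)/n$ being sub-critical.
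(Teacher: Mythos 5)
Your overall strategy — Kato's inequality, a doubling cutoff $\phi_R$, and the key pointwise bound $|u_1^m-u_2^m|\le C'|x|^2|u_1-u_2|$ on $\{|x|\ge r_0\}$ obtained from the integral mean value formula together with the lower barrier $u_1\ge C|x|^{-2/(1-m)}$ — is precisely the mechanism behind the result, and the exponent bookkeeping $(m-1)\cdot\big(-\tfrac{2}{1-m}\big)=2$ cancelling $|\Delta\phi_R|\sim R^{-2}$ is the right heart of the matter. The paper itself gives no proof; it cites Corollary 2.2 of \cite{DS1} and says ``by an argument similar to,'' so you are, in substance, reconstructing the reference argument.

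However, the step you flag as ``the principal difficulty'' does contain a genuine gap as written. To send $R\to\infty$ and kill the tail $C''\int_0^t\int_{A_R}|w|$, you need to know that $w(\cdot,s)\in L^1(\R^n)$ for a.e.\ $s\in(0,t)$, which is not available a priori. Your proposal to truncate the data and then ``pass to the limit via $L^1$-stability for the fast diffusion equation'' is circular: $L^1$-stability in the form $\|u_1(t)-u_2(t)\|_{L^1}\le\|u_{0,1}-u_{0,2}\|_{L^1}$ is exactly what you are trying to prove, and the Herrero--Pierre well-posedness you cite as backing is established for $\frac{n-2}{n}<m<1$, not for the regime $0<m<\frac{n-2}{n}$ of this lemma. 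A non-circular closure needs either (i) an a priori qualitative bound, e.g.\ showing $\sup_{0<s<T_1}\int_{B_R}|w(\cdot,s)|$ grows at most polynomially in $R$ and then iterating the cutoff inequality $\int|w(t)|\phi_R\le\|w(0)\|_{L^1}+C''T_1\sup_s\int_{B_{2R}}|w(\cdot,s)|$ geometrically, or (ii) exhausting $\R^n$ by bounded domains with a boundary term that is controlled directly by the lower barrier, rather than approximating the initial data. Without one of these, the $R\to\infty$ limit is not justified. The rescaled inequality, once the first inequality is in hand, is indeed just the change of variables you describe.
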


\begin{lem}\label{L1-contraction-singular-soln-lem}
Let $n\ge 3$, $0<m<\frac{n-2}{n}$, $\rho_1>0$, $\beta>\frac{m\rho_1}{n-2-nm}$, $\alpha=\frac{2\beta+\rho_1}{1-m}$, and let $u_1$, $u_2$ be two solutions of \eqref{Cauchy-problem} in $(\R^n\setminus\{0\})\times (0,T)$ with initial values $u_{0,1}\ge 0$, $u_{0,2}\ge 0$, respectively and there exist constants $C_1>0$, $C_2>0$, such that
\begin{equation}
C_1\le |x|^{\frac{\alpha}{\beta}}u_i(x)\le C_2\quad\forall 0<|x|\le 1, x\in\R^n, i=1,2.
\end{equation}
Suppose $u_{0,1}-u_{0,2}\in L^1(\R^n\setminus\{0\})$ and for any $0<T_1<T$ there exist constants $r_0>0$, $C>0$,  such that either $u_1(x,t)\ge C/|x|^{\frac{2}{1-m}}$ for all $|x|\ge r_0$, $0<t<T_1$, or $u_2(x,t)\ge C/|x|^{\frac{2}{1-m}}$ for all $|x|\ge r_0$, $0<t<T_1$ holds. Then 
\begin{equation*}
\int_{\R^n\setminus\{0\}}|u_1(x,t)-u_2(x,t)|\,dx\le\int_{\R^n\setminus\{0\}}|u_{0,1}-u_{0,2}|\,dx\quad\forall 0<t<T.
\end{equation*}
Hence if $\4{u}_1$, $\4{u}_2$, are the rescaled solution of $u_1$, $u_2$, given by \eqref{rescald-soln} and $\beta>\beta_1$, then
\begin{equation*}
\int_{\R^n\setminus\{0\}}|\4{u}_1(x,s)-\4{u}_2(x,s)|\,dx\le e^{-(n\beta-\alpha)s}\int_{\R^n\setminus\{0\}}|u_{0,1}-u_{0,2}|\,dx\quad\forall s>-\log T.
\end{equation*}
\end{lem}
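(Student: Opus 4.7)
The plan is to imitate the argument for Lemma \ref{L1-contraction-lem} (or \cite[Cor.~2.2]{DS1}) but to use a \emph{two-sided} cutoff that excises both a neighbourhood of the singular point $x=0$ and the far field. Set $w = u_1-u_2$, so that $w_t = \Delta(u_1^m - u_2^m) = \Delta(aw)$ with $a = (u_1^m-u_2^m)/w \ge 0$. Introduce $\varphi_{\3,R}(x) = \eta_{\3}(|x|)\chi_R(|x|)$, where $\eta_{\3}\equiv 0$ on $[0,\3]$, $\eta_{\3}\equiv 1$ on $[2\3,\infty)$, $\chi_R\equiv 1$ on $[0,R]$, $\chi_R\equiv 0$ on $[2R,\infty)$, so that $|\Delta\varphi_{\3,R}|\lesssim \3^{-2}\mathbf{1}_{\{\3\le|x|\le 2\3\}} + R^{-2}\mathbf{1}_{\{R\le|x|\le 2R\}}$. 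Approximating $|\cdot|$ by a smooth convex $J_\delta$ with $J_\delta'\to\mathrm{sign}$, multiplying the identity $w_t = \Delta(u_1^m - u_2^m)$ by $J_\delta'(w)\varphi_{\3,R}$, integrating twice by parts (legal because $u_1,u_2$ are smooth on $(\R^n\setminus\{0\})\times(0,T)$ and $\mathrm{supp}\,\varphi_{\3,R}$ is compact in $\R^n\setminus\{0\}$), and letting $\delta\downarrow 0$ via Kato's inequality $\mathrm{sign}(w)\Delta(aw)\le\Delta|aw|$ yields
\[
\frac{d}{dt}\int_{\R^n\setminus\{0\}}|w|\,\varphi_{\3,R}\,dx \;\le\; \int_{\R^n\setminus\{0\}}|u_1^m-u_2^m|\,|\Delta\varphi_{\3,R}|\,dx.
\]

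The new ingredient is the inner-annulus estimate. Using the hypothesis $u_i(x)\le C_2|x|^{-\alpha/\beta}$,
\[
\int_{\3\le|x|\le 2\3}|u_1^m-u_2^m|\,|\Delta\varphi_{\3,R}|\,dx \;\lesssim\; \3^{-2}\cdot\3^{-m\alpha/\beta}\cdot\3^{n} \;=\; \3^{\,n-2-m\alpha/\beta}.
\]
A short computation using $\alpha=(2\beta+\rho_1)/(1-m)$ shows that $n-2-m\alpha/\beta>0$ is \emph{exactly equivalent} to the standing hypothesis $\beta>m\rho_1/(n-2-nm)$, so this term vanishes as $\3\to 0$. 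The outer annulus is treated precisely as in \cite[Cor.~2.2]{DS1}: the assumed lower bound $u_i(x,t)\ge C|x|^{-2/(1-m)}$ for $|x|\ge r_0$ permits Herrero--Pierre type barriers to conclude that $\int_{R\le|x|\le 2R}|u_1^m-u_2^m|\,|\Delta\varphi_{\3,R}|\,dx\to 0$ as $R\to\infty$. Integrating in $t$ over $(0,t)$ and letting $\3\to 0$, $R\to\infty$ produces the first stated inequality.

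The rescaling estimate is then a direct change of variables. With $y=(T-t)^{-\beta}x$,
\[
\int_{\R^n\setminus\{0\}}|\4{u}_1(x,s)-\4{u}_2(x,s)|\,dx \;=\; e^{-(n\beta-\alpha)s}\int_{\R^n\setminus\{0\}}|u_1(y,t)-u_2(y,t)|\,dy,
\]
where $n\beta-\alpha>0$ by \eqref{beta>beta1-cond} under the hypothesis $\beta>\beta_1$; combining with the $L^1$ contraction at time $t$ and sending $t\downarrow 0$ produces the claimed exponential bound. The main technical obstacle is the rigorous passage $\delta\downarrow 0$ in Kato's inequality on a domain with a singular boundary point, but this is standard once the cutoff reduces the analysis to a fixed compact subset of $\R^n\setminus\{0\}$ on which $u_1,u_2$ are smooth. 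The genuinely new estimate is the inner-annulus computation above, whose exponent matches the threshold $\beta>m\rho_1/(n-2-nm)$ baked into the paper's standing hypotheses.
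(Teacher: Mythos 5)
Your proposal is correct and follows essentially the approach the paper indicates, namely the Kato-inequality cutoff argument of Corollary~2.2 in \cite{DS1}, augmented by an inner cutoff annulus excising the singular point. Your key new computation is right: $n-2-\tfrac{m\alpha}{\beta}>0$ is (after writing $\tfrac{m\alpha}{\beta}=\tfrac{2m}{1-m}+\tfrac{m\rho_1}{(1-m)\beta}$ and simplifying) exactly the standing hypothesis $\beta>\tfrac{m\rho_1}{n-2-nm}$, so the inner-annulus error term $\3^{\,n-2-m\alpha/\beta}$ vanishes as $\3\to 0$; the only cosmetic slip is that the rescaled estimate follows directly from the change of variables $y=(T-t)^{-\beta}x$ combined with the $L^1$-contraction at time $t$, and no limit $t\downarrow 0$ is needed or meaningful.
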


By \eqref{v-infty-behaviour}, \eqref{g-lambda-infty-decay-rate}, Lemma \ref{L1-contraction-lem}, Lemma \ref{L1-contraction-singular-soln-lem} and the same argument as the proof of Theorem 1.1 of \cite{HuK} we get Theorem \ref{convergence-thm3} and the following result.

\begin{thm}\label{convergence-thm2}
Let $n\ge 3$, $0<m<\frac{n-2}{n}$, $T>0$, $\rho_1=1$, $\beta>\beta_1$ and $\alpha=\frac{2\beta+1}{1-m}$. Let $\psi_{\lambda}$ be given by \eqref{self-similar-soln} and let $u_0$ satisfy 
\begin{equation*}\label{u0-bd-psi1-2}
\psi_{\lambda_1}(x,0)\le u_0(x)\le\psi_{\lambda_2}(x,0)\quad\mbox{ in }\R^n
\end{equation*}  
and 
\begin{equation}
u_0(x)-\psi_{\lambda_0}(x,0)\in L^1(\R^n)
\end{equation}
for some constants $\lambda_2>\lambda_1>0$ and $\lambda_0>0$. Let $u$ be  the maximal solution of \eqref{Cauchy-problem} and $\4{u}$ be given by \eqref{rescald-soln}.
Then the rescaled solution $\4{u}(\cdot,s)$ converges uniformly on every compact subset of $\R^n$ and in $L^1(\R^n)$ to $v_{\lambda_0}$ as $s\to\infty$. Moreover,
\begin{equation*}
\|\4{u}(\cdot,s)-v_{\lambda_0}\|_{L^1(\R^n)}\le e^{-(n\beta-\alpha)s}\|u_0-\psi_{\lambda_0}(\cdot,0)\|_{L^1(\R^n)}\quad\forall s>-\log T.
\end{equation*}
\end{thm}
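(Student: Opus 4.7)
\noindent\textbf{Proof proposal for Theorem \ref{convergence-thm2}:} The plan is to run the Daskalopoulos--Sesum / Hui--Kim scheme, with Lemma \ref{L1-contraction-lem} doing essentially all the work. First, since $u$ is the maximal solution and $\psi_{\lambda_1}(\cdot,0)\le u_0\le \psi_{\lambda_2}(\cdot,0)$, the standard comparison principle for \eqref{fde} gives
\begin{equation*}
\psi_{\lambda_1}(x,t)\le u(x,t)\le \psi_{\lambda_2}(x,t)\quad\mbox{in }\R^n\times (0,T).
\end{equation*}
Under the rescaling \eqref{rescald-soln} the barriers $\psi_{\lambda_i}$ become the stationary profiles $v_{\lambda_i}$, so
\begin{equation*}
v_{\lambda_1}(x)\le \4u(x,s)\le v_{\lambda_2}(x)\quad\mbox{in }\R^n\times(-\log T,\infty).
\end{equation*}
This delivers both a global $L^\infty$ bound and a uniform positive lower bound on every compact set of $\R^n$, which in turn makes the rescaled equation uniformly parabolic on compacta.

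Next I would apply Lemma \ref{L1-contraction-lem} with $u_1=u$, $u_2=\psi_{\lambda_0}$, $u_{0,1}=u_0$, $u_{0,2}=\psi_{\lambda_0}(\cdot,0)$. The integrability hypothesis $u_{0,1}-u_{0,2}\in L^1(\R^n)$ is assumed, and the required lower bound $\psi_{\lambda_0}(x,t)\ge C|x|^{-2/(1-m)}$ for $|x|\ge r_0$, $0<t<T_1<T$, is verified using \eqref{v-infty-behaviour}: since $v_{\lambda_0}(r)^{1-m}\to C_\ast/r^2\cdot(1-m)\rho_1/(2m(n-2-nm))$ as $r\to\infty$, and since $\alpha-2\beta/(1-m)=1/(1-m)$ by the definition of $\alpha$, one gets
\begin{equation*}
\psi_{\lambda_0}(x,t)=(T-t)^\alpha v_{\lambda_0}((T-t)^\beta x)\ge c(T-T_1)^{1/(1-m)}|x|^{-2/(1-m)}
\end{equation*}
for $|x|$ large, uniformly in $t\in(0,T_1)$. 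The lemma's rescaled conclusion, with $\4\psi_{\lambda_0}\equiv v_{\lambda_0}$, is then precisely the claimed estimate
\begin{equation*}
\|\4u(\cdot,s)-v_{\lambda_0}\|_{L^1(\R^n)}\le e^{-(n\beta-\alpha)s}\|u_0-\psi_{\lambda_0}(\cdot,0)\|_{L^1(\R^n)}.
\end{equation*}
Because $\beta>\beta_1$ implies $n\beta-\alpha>0$ by \eqref{beta>beta1-cond}, this forces $L^1$-convergence $\4u(\cdot,s)\to v_{\lambda_0}$ as $s\to\infty$.

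Finally, to upgrade to uniform convergence on compact subsets, I would invoke parabolic Schauder estimates applied to the rescaled equation satisfied by $\4u$: the uniform two-sided bounds by $v_{\lambda_1}$ and $v_{\lambda_2}$ give uniform parabolicity on compacta, so $\{\4u(\cdot,s)\}_{s\ge s_0}$ is precompact in $C^{2}_{loc}(\R^n)$. Any subsequential limit $w=\lim\4u(\cdot,s_i)$ along $s_i\to\infty$ solves the stationary equation \eqref{elliptic-eqn} and equals $v_{\lambda_0}$ almost everywhere by the $L^1$ statement just proved; by continuity $w=v_{\lambda_0}$ pointwise, so the whole family converges uniformly on every compact set. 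The main (though mild) technical point is the verification of the lower-bound hypothesis of Lemma \ref{L1-contraction-lem} uniformly on $(0,T_1)$, which is clean once one spots the cancellation $\alpha-2\beta/(1-m)=1/(1-m)$; the rest of the argument is a routine combination of comparison, $L^1$-contraction, and regularity. Note that no radial symmetry on $u_0$ is needed — the $L^1$-contraction identifies the radial limit $v_{\lambda_0}$ directly.
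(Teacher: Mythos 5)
Your proposal is correct and matches the route the paper takes: the paper's own proof is a one-line reference to \eqref{v-infty-behaviour}, Lemma \ref{L1-contraction-lem}, and the argument of Theorem~1.1 of \cite{HuK}, which is exactly the combination you spell out (comparison to sandwich between $\psi_{\lambda_1},\psi_{\lambda_2}$, then the $L^1$-contraction lemma with the lower-bound hypothesis checked via \eqref{v-infty-behaviour} and the cancellation $\alpha-\tfrac{2\beta}{1-m}=\tfrac{1}{1-m}$, then Schauder/Arzel\`a--Ascoli to upgrade $L^1$ convergence to local uniform convergence). The one step you gloss over slightly is the comparison $\psi_{\lambda_1}\le u$ for the \emph{maximal} solution of \eqref{Cauchy-problem} in the regime $0<m<\tfrac{n-2}{n}$, where uniqueness can fail; the usual fix is to compare $u$ with solutions of Dirichlet problems on $B_R$ with initial data $\psi_{\lambda_1}(\cdot,0)$ and let $R\to\infty$, which the paper silently imports from \cite{HuK}.
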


By Theorem \ref{convergence-thm2} and and argument similar to the proof of Theorem 1.2 of \cite{HuK}, Theorem \ref{convergence-thm1} follows.


\begin{thebibliography}{99}

\bibitem[A]{A} D.G.~Aronson, {\em The porous medium equation, CIME Lectures, in Some problems in
Nonlinear Diffusion}, Lecture Notes in Mathematics 1224, Springer-Verlag, New York, 1986.

\bibitem[BBDGV]{BBDGV} A.~Blanchet, M.~Bonforte, J.~Dolbeault, G.~Grillo and
J.L.~Vazquez, {\em Asymptotics of the fast diffusion equaiton via entropy
estimates}, Arch. Rat. Mech. Anal. 191 (2009), 347--385.

\bibitem[BDGV]{BDGV} M.~Bonforte, J.~Dolbeault, G.~Grillo and J.L.~Vazquez,
{\em Sharp rates of decay of solutions to the nonlinear fast diffusion
equation via functional inequalities}, Proc. Nat. Acad. Sci. 107 (2010),
no. 38, 16459--16464.. Anal. 240 (2006), no. 2, 399--428.

\bibitem[DK]{DK} P.~Daskalopoulos and C.E.~Kenig, {\em Degenerate
diffusion-initial value problems and local regularity theory},
Tracts in Mathematics 1, European Mathematical Society, 2007.

\bibitem[DKS]{DKS} P.~Daskalopoulos, J.~King and N.~Sesum, {\em Extinction profile of complete non-compact solutions to the Yamabe flow}, http://arxiv.org/abs/1306.0859.

\bibitem[DPS]{DPS} P.~Daskalopoulos, M.~del Pino and N.~Sesum, {\em Type II ancient compact solutions to
the Yamabe flow}, http://arxiv.org/abs/1209.5479v2.

\bibitem[DS1]{DS1} P.~Daskalopoulos and N.~Sesum, {\em On the extinction
profile of solutions to fast diffusion}, J. Reine Angew Math. 622 (2008),
95--119.

\bibitem[DS2]{DS2} P.~Daskalopoulos and N.~Sesum, {\em The classification of
locally conformally flat Yamabe solitons}, Advances in Math. 240 (2013), 346--369.

\bibitem[FVWY]{FVWY} M.~Fila, J.L.~Vazquez, M.~Winkler and E.~Yanagida, {\em Rate of convergence to
Barenblatt profiles for the fast diffusion equation}, Arch. Rational Mech. Anal. 204 (2012), no. 2, 599--625.

\bibitem[FW]{FW} M.~Fila and M.~Winkler, {\em Rate of convergence to separable solutions of the fast diffusion equatiion}, http://arxiv.org/abs/1405.4661.

\bibitem[GT]{GT} D.~Gilbarg and N.S.~Trudinger, {\em Elliptic partial differential equations of second order}, 2nd ed., Springer-Verlag, Heidelberg, 2001.

\bibitem[HP]{HP} M.A.~Herrero and M.~Pierre, {\em The Cauchy problem for $u_t=\Delta u^m$ when $0<m<1$},
Trans. Amer. Math. Soc. 291 (1985), no. 1, 145-158.

\bibitem[Hs1]{Hs1} S.Y.~Hsu, {\em Singular limit and exact decay rate of a nonlinear elliptic equation}, Nonlinear Anal. TMA 75 (2012), no. 7, 3443--3455.

\bibitem[Hs2]{Hs2} S.Y.~Hsu, {\em Existence and asymptotic behaviour of solutions of the very fast diffusioin equation}, Manuscripta Math. 140 (2013), nos. 3--4, 441-460.

\bibitem[Hs3]{Hs3} S.Y.~Hsu, {\em Exact decay rate of a nonlinear elliptic equation related to the Yamabe flow}, http://arxiv.org/abs/1211.3232v2, to appear in Proc. AMS. 

\bibitem[Hu1]{Hu1} K.M.~Hui, {\em On some Dirichlet and Cauchy problems
for a singular diffusion equation}, Differential Integral Equations 15
(2002), no. 7, 769--804.

\bibitem[Hu2]{Hu2} K.M.~Hui, {\em Singular limit of solutions of the very
fast diffusion equation}, Nonlinear Anal. TMA 68 (2008), 1120--1147.

\bibitem[HuK]{HuK} K.M.~Hui and S.~Kim, {\em Large time behaviour of the higher dimensional logarithmic diffusion equation}, Proc. Royal Soc. Edinburgh 143A (2013), 817--830. 

\bibitem[PS]{PS} M.~Del Pino and M.~S\'aez, {\em On the extinction profile
for solutions of $u_t=\Delta u^{(N-2)/(N+2)}$}, Indiana Univ. Math. J. 50
(2001), no. 1, 611--628.

\bibitem[V1]{V1} J.L.~Vazquez, {\em Nonexistence of solutions for nonlinear
heat equations of fast-diffusion type}, J. Math. Pures Appl. 71 (1992),
503--526.

\bibitem[V2]{V2} J.L.~Vazquez, {\em Smoothing and decay estimates for nonlinear
diffusion equations}, Oxford Lecture Series in Mathematics and its Applications
33, Oxford University Press, Oxford, 2006.

\end{thebibliography}
\end{document}